\documentclass[12pt, reqno]{amsart} 
\usepackage{amssymb,amscd,amsfonts,amsbsy}
\usepackage{latexsym}
\usepackage{exscale}
\usepackage{amsmath,amsthm,amsfonts}
\usepackage{mathrsfs}
\usepackage{xcolor} 
\usepackage{cite} 
\usepackage{esint} 
\usepackage{amssymb} 
\usepackage{stmaryrd}
\usepackage[colorlinks=true, linkcolor=blue, citecolor=blue, urlcolor=blue, backref=page]{hyperref}

\renewcommand*{\backrefalt}[4]{%
  \ifcase #1\relax
  \or\ #2%
  \else\ #2%
  \fi
}

\parskip=3pt

\setlength{\oddsidemargin}{0in}
\setlength{\evensidemargin}{0in}
\setlength{\evensidemargin}{0in}
\setlength{\textwidth}{6.2in}
\setlength{\textheight}{9in}
\setlength{\topmargin}{-0.50in}
\calclayout

\allowdisplaybreaks[3] 
\emergencystretch=2em 

\newtheorem{theorem}{Theorem}[section]
\newtheorem{proposition}[theorem]{Proposition}
\newtheorem{lemma}[theorem]{Lemma}

\newtheorem{definition}[theorem]{Definition}

\newtheorem{Remark}[theorem]{Remark}
\numberwithin{equation}{section}

\makeatletter
\@addtoreset{equation}{section}
\makeatother

\DeclareMathOperator{\supp}{supp}

\def\Z{\mathbb{Z}}

\def\R{\mathbb{R}}

\def\supp{\operatorname{supp}}

\begin{document}

	\title[Extrapolation to product Morrey-Herz spaces]
	{Extrapolation to product Morrey-Herz spaces and applications}

	\author[X. Cen]{Xi Cen}
	\address{Xi Cen\\
		School of Science\\
		China University of Mining and
		Technology-Beijing\\
		Beijing 100083 \\
		People's Republic of China}\email{xicenmath@gmail.com}

	\author[Z. Song]{Zichen Song}
	\address{Zichen Song\\
		School of Mathematics and Statistics\\
		Xinyang Normal University\\
		Xinyang 464000\\
		People's Republic of China}\email{zcsongmath@gmail.com}

	\subjclass[2020]{42B35, 42B25, 42B20.}
	
	\keywords{Product Morrey-Herz spaces; Extrapolation; Rubio de Francia; Block spaces; Calderón-Zygmund operators; Strong maximal operators.}

	\thanks{Corresponding author: Xi Cen, Email: xicenmath@gmail.com}
	
	\begin{abstract} 
		
	The purpose of this paper is threefold. First, we introduce product Morrey-Herz spaces and product block-Herz spaces, establish their duality, and prove the boundedness of the strong maximal operator on product block-Herz spaces; these results provide the foundation for extrapolation. Second, using the Rubio de Francia iteration method, we establish extrapolation results on product Morrey-Herz spaces. Finally, we give applications to Fefferman-Stein vector-valued strong maximal inequalities, the John-Nirenberg inequality, a characterization of little bmo in terms of product Morrey-Herz spaces, and the boundedness of bi-parameter Calderón-Zygmund operators and their commutators.
	\end{abstract}

	\maketitle
	\section{Introduction}\label{sec1}
	In this paper, we investigate extrapolation theory on product Morrey-Herz spaces and its applications.
	The foundational work in classical extrapolation theory was pioneered by Fefferman and Francia \cite{Fra.1982, Fra.1984}. 
	More recently, this theory has been extended to Lebesgue spaces with variable exponents \cite{Cruz2006}, rearrangement-invariant Banach function spaces \cite{Cruz2004}, Banach function spaces \cite{Cruz2011}, variable Herz spaces \cite{Ho2020_1, Ho2020_2, Ho2021_1}, and related settings.
	
	The classical Morrey space was introduced by Morrey in 1938 to study the regularity of elliptic partial differential equations \cite{Morrey1938} and is an important and natural generalization of Lebesgue spaces.
	Many researchers have studied the theory and applications of Morrey spaces; for example, see \cite{Gul.2018, Liu2014, Naka2018}. The theory of extrapolation has also been extended to classical Morrey spaces. Notable contributions in this area include works by Duoandikoetxea et al. \cite{Duo2018, Duo2021}, Kokilashvili et al. \cite{Koki2019}, Ho \cite{Ho2017_1}, and Rosenthal et al. \cite{Rose2016}.
	
	In addition to the aforementioned function spaces, the classical Herz space \( K_{p,q}^{\alpha}(\mathbb{R}^n) \) is also of interest. Introduced by Herz in 1968 \cite{Herz1968} to study Fourier series and Fourier transform, classical Herz spaces serve as extensions of Lebesgue spaces. Numerous significant properties of Lebesgue spaces, such as the boundedness of singular integral operators, the mapping properties of fractional integral operators, complex interpolation, and Hausdorff-Young inequalities, have been extended to Herz spaces; refer to \cite{Fei2008, Ho2019_1, Lu1996}. In works like \cite{Ra2009, Ra2012}, solutions of partial differential equations are also investigated using Herz spaces \( K_{p,q}^{\alpha}(\mathbb{R}^n) \). For detailed information regarding classical Herz spaces \( K_{p,q}^{\alpha}(\mathbb{R}^n) \), readers may refer to \cite{Lu2008}.
	
	To amalgamate Morrey spaces and Herz spaces, classical Morrey-Herz spaces \( MK_{p,q}^{\alpha,\lambda}(\mathbb{R}^n) \) were introduced and investigated in \cite{Lu2005}. Subsequently, in works such as \cite{Ab.2021, Shi2009}, Morrey-Herz spaces were demonstrated to be appropriate generalizations of both Morrey spaces and Herz spaces, facilitating the study of the boundedness of integral operators in harmonic analysis. Moreover, Morrey-Herz spaces have recently been extended to the variable setting, as evidenced by works like \cite{Dri2016, Izuki2010}.
	
	In 1982, Fefferman and Stein \cite{Fe.1982} introduced the bi-parameter singular integral operator and established its boundedness on weighted product Lebesgue spaces. This result initiated the study of bi-parameter operators and bi-parameter function spaces in harmonic analysis, see \cite{Chen2020,Fe.1987,Hong2018} for more results on the bi-parameter theory. 
	
	As is well-known, bi-parameter operators exhibit higher singularity compared to their one-parameter counterparts. Extending the one-parameter theory in harmonic analysis to its bi-parameter version is both significant and challenging. It is worth noting that Ho recently developed bi-parameter extrapolation theory in \cite{Ho2016_1}, while Kokilashvili established the boundedness of bi-parameter operators in \cite{Koki.2018}. However, the extrapolation theory on product Morrey-Herz spaces remains unknown. Addressing this issue constitutes the main focus of this paper.
	
	Establishing extrapolation theory is challenging for spaces whose norms are not generated by Banach function spaces, in particular for Morrey-type spaces \cite[Sect.5]{P.A.2009}. Since Morrey-type spaces are not Banach function spaces in general \cite{Sawano2015} and product Morrey-Herz spaces are inherently bi-parameter spaces, additional difficulties arise beyond those in the classical setting. To develop extrapolation theory on product Morrey-Herz spaces, we use product block-Herz spaces as pre-dual spaces. A key point is that the strong maximal operator is bounded on product block-Herz spaces under suitable assumptions, which allows us to establish the desired extrapolation results.
	
	The extrapolation theory on product Morrey-Herz spaces has several applications. For instance, it enables us to prove boundedness results for bi-parameter operators once the corresponding weighted product Lebesgue estimates are available. It also allows us to formulate the John-Nirenberg inequality in the product Morrey-Herz norm, which yields a new characterization of little bmo in terms of product Morrey-Herz spaces. These applications are obtained by combining our extrapolation results with known inequalities and characterizations from \cite{Ho2009, Ho2011, Ho2016_1, Izuki2012}.
	
	The paper is organized as follows. In Section~\ref{sec2}, we recall the necessary definitions and preliminary lemmas. In Section~\ref{sec3}, we establish the duality between product Morrey-Herz spaces and product block-Herz spaces and prove the boundedness of the strong maximal operator \(M_S\) on product block-Herz spaces. In Section~\ref{sec4}, we prove the extrapolation theorem on product Morrey-Herz spaces. Finally, in Section~\ref{sec5}, we apply the extrapolation theorem to Fefferman-Stein vector-valued strong maximal inequalities, the John-Nirenberg inequality, a new characterization of little bmo, and the boundedness of bi-parameter Calderón-Zygmund operators and their commutators.
	
	\section {Preliminaries}\label{sec2}
	Throughout the paper, the following notations are employed.
	\begin{itemize}
		\item  \( \mathcal{M}(\mathbb{R}^d) \) and \( L_{{loc}}(\mathbb{R}^d) \) denote the class of Lebesgue measurable functions and the class of locally integrable functions on \( \mathbb{R}^d \), respectively.
		\item For \( 1 \leq p \leq \infty \), \( p^{\prime} \) denotes the conjugate exponent of \( p \), such that \( \frac{1}{p} + \frac{1}{p^{\prime}} = 1 \).
		\item For any \( r > 0 \) and \( x \in \mathbb{R}^d \), \( Q(x, r) \) represents the cube in \( \mathbb{R}^d \) centered at \( x \) with side length \( r \). The set of all such cubes is denoted by \( \mathcal{Q} = \{ Q(x, r) : x \in \mathbb{R}^d, r > 0 \} \).
		\item For any \( r, s > 0 \) and \( z = (x, y) \in \mathbb{R}^n \times \mathbb{R}^m \), \( R(z, r, s) = Q(x, r) \times Q(y, s) \), and \( \mathcal{R} \) denotes the set of all \( R(z, r, s) \) for any \( r, s > 0 \) and \( z \in \mathbb{R}^n \times \mathbb{R}^m \).
	\item The dyadic cubes and rectangles centered at the origin are denoted by \(Q_k=Q(0,2^k)\) and \(R_{i,j}=R(0,2^i,2^j)\), where \(k,i,j\in\mathbb Z\). We write \(I_i\times J_j\) for the dyadic product annulus in \(\mathbb R^n\times\mathbb R^m\), where \(I_i=Q_i\setminus Q_{i-1}\) and \(J_j=Q_j\setminus Q_{j-1}\).
		\item \( \chi_E \) denotes the characteristic function of a measurable set \( E \), and \( |E| \) represents the Lebesgue measure of \( E \). \( \chi_{i,j} \) signifies the characteristic function on \( I_i \times J_j \).
	\end{itemize}

	The symbols \( \mathbb{N} \) and \( \mathbb{C} \) denote the collections of all positive integers and all complex numbers, respectively. The collection of all non-negative integers is represented by \( \mathbb{N}_0 \). The notation \( A \lesssim B \) signifies that \( A \leq C B \) for some constant \( C > 0 \), while \( A \approx B \) indicates that \( A \lesssim B \) and \( B \lesssim A \).
	To denote that the implicit constant \( C \) actually depends on certain parameters such as \( \alpha, \beta, \gamma \), etc., we append these parameters to the symbols. For instance, \( A \lesssim_{\alpha, \beta} B \) implies that the implicit constant depends on the parameters \( \alpha \) and \( \beta \).
	
	For \( \alpha \in \mathbb{R} \) and \( 0 < p, q \leq \infty \), the classical product Herz space \( \vec{\dot{K}}_{p,q}^\alpha(\mathbb{R}^n \times \mathbb{R}^m) \), another extension of product Lebesgue spaces, comprises all \( f \in L_{\text{loc}}^p(\mathbb{R}^n \times \mathbb{R}^m \setminus \{(0,0)\}) \) such that \( \|f\|_{\vec{\dot{K}}_{p,q}^\alpha(\mathbb{R}^n \times \mathbb{R}^m)} < \infty \), where
	\[
	\|f\|_{\vec{\dot{K}}_{p,q}^\alpha(\mathbb{R}^n \times \mathbb{R}^m)} = \left\{ \sum_{i, j \in \mathbb{Z}} 2^{(i+j) q\alpha} \|f \chi_{i,j}\|_{L^p(\mathbb{R}^n \times \mathbb{R}^m)}^q \right\}^{\frac{1}{q}}
	\]
	This norm is modified appropriately when \(p = \infty \) or \(q = \infty \). The following proposition records a simple property of the product Herz space, already proven in \cite{Lu1995, Lu2008}.
	\begin{proposition}
		The space ${\vec{\dot{K}}_{p, q}^{\alpha}(\mathbb{R}^n \times \mathbb{R}^m)}$ is a quasi-Banach space, and if $p, q \geq 1$, then ${\vec{\dot{K}}_{p, q}^{\alpha}(\mathbb{R}^n \times \mathbb{R}^m)}$ is a Banach space.
	\end{proposition}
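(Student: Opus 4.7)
The plan is to verify the quasi-norm axioms for $\|\cdot\|_{\vec{\dot K}^\alpha_{p,q}}$ by reducing everything to the known (quasi-)norm structure of $L^p$ and $\ell^q$, then address completeness by the standard dyadic-pasting argument: extract $L^p$-limits on each annulus $I_i\times J_j$ and glue them together. In the regime $p,q\ge 1$, a genuine triangle inequality is immediate from two applications of Minkowski, upgrading the space from quasi-Banach to Banach.

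\textbf{Quasi-norm structure.} Positivity and homogeneity are direct from the definition. Definiteness follows because $\|f\|_{\vec{\dot K}^\alpha_{p,q}}=0$ forces $\|f\chi_{i,j}\|_{L^p}=0$ for every $i,j\in\mathbb{Z}$, and the family $\{I_i\times J_j\}$ partitions $\mathbb{R}^n\times\mathbb{R}^m$ modulo a null set. For the quasi-triangle inequality, one combines the $L^p$ quasi-triangle inequality on each annulus with the elementary estimate $(a+b)^r\le C_r(a^r+b^r)$ applied with $r=q$. When $p,q\ge 1$, Minkowski's inequality in $L^p$ on each $I_i\times J_j$ yields $\|(f+g)\chi_{i,j}\|_{L^p}\le \|f\chi_{i,j}\|_{L^p}+\|g\chi_{i,j}\|_{L^p}$, and then Minkowski in $\ell^q$ applied to the weighted sequences $\{2^{(i+j)\alpha}\|f\chi_{i,j}\|_{L^p}\}_{(i,j)\in\mathbb{Z}^2}$ and the analogous sequence for $g$ delivers the ordinary triangle inequality, making the space normed.

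\textbf{Completeness.} Let $\{f_k\}$ be Cauchy in $\vec{\dot K}^\alpha_{p,q}$. Since $2^{(i+j)\alpha}\|(f_k-f_\ell)\chi_{i,j}\|_{L^p}\le \|f_k-f_\ell\|_{\vec{\dot K}^\alpha_{p,q}}$, for each fixed $(i,j)$ the sequence $\{f_k\chi_{i,j}\}_k$ is Cauchy in $L^p(I_i\times J_j)$, hence converges there to some $g_{i,j}$ supported in $I_i\times J_j$. Set $f=\sum_{i,j}g_{i,j}$, a pointwise a.e.\ well-defined function by the essential disjointness of supports. To show $f_k\to f$ in the Herz quasi-norm, fix $\epsilon>0$ and $N$ with $\sum_{i,j}2^{(i+j)q\alpha}\|(f_k-f_\ell)\chi_{i,j}\|_{L^p}^q<\epsilon$ whenever $k,\ell\ge N$. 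For any finite $F\subset\mathbb{Z}^2$, letting $\ell\to\infty$ inside the finite sum (using the $L^p$-convergence on each annulus) gives $\sum_{(i,j)\in F}2^{(i+j)q\alpha}\|(f_k-f)\chi_{i,j}\|_{L^p}^q\le\epsilon$, and exhausting by $F\uparrow\mathbb{Z}^2$ shows $\|f_k-f\|_{\vec{\dot K}^\alpha_{p,q}}^q\le\epsilon$. The decomposition $f=(f-f_N)+f_N$ then places $f$ in the space.

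\textbf{Main obstacle.} The only delicate point is exchanging the $\ell\to\infty$ limit with the infinite sum in $(i,j)$, which is obstructed in the range $0<q<1$ by the absence of Minkowski in $\ell^q$. The truncation-and-exhaustion device indicated above sidesteps this by working first on the finite index set $F$ before passing $F\uparrow\mathbb{Z}^2$, so completeness holds across the full quasi-Banach range and not only under the normed hypothesis $p,q\ge 1$.
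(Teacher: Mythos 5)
Your proof is correct, but note that the paper does not actually prove this proposition: it is quoted as known, with a citation to \cite{Lu1995,Lu2008}, so there is no internal argument to compare against. Your direct verification is the standard one and supplies exactly the missing details: the quasi-triangle inequality by combining the $L^p$ quasi-norm estimate on each annulus with $(a+b)^q\le C_q(a^q+b^q)$ (and genuine Minkowski in $L^p$ and $\ell^q$ when $p,q\ge 1$), and completeness by extracting $L^p$-limits on each $I_i\times J_j$, pasting, and passing to the limit first over a finite index set $F$ and then exhausting $F\uparrow\mathbb{Z}^2$ — this correctly avoids any illegitimate interchange in the range $0<q<1$, and the limit $\ell\to\infty$ in each fixed term is justified because convergence in $L^p$ implies convergence of the $L^p$ quasi-norms (via $p$-subadditivity when $p<1$). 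Two small points are worth making explicit. First, your closing step ``$f=(f-f_N)+f_N$ places $f$ in the space'' establishes finiteness of the quasi-norm, but the definition also asks for local $p$-integrability; here one should read that clause on $(\mathbb{R}^n\setminus\{0\})\times(\mathbb{R}^m\setminus\{0\})$ rather than literally on the complement of the single point $(0,0)$, since a compact set avoiding only $(0,0)$ can meet infinitely many product annuli $I_i\times J_j$ (those accumulating at the coordinate cross), and finiteness of the norm does not control such sets when $\alpha>0$; with the cross-complement reading, membership is immediate from $f\chi_{i,j}\in L^p$ for every $i,j$, because each compact set then meets only finitely many annuli. Second, the statement also allows $p=\infty$ or $q=\infty$ (with the usual modifications), and your argument adapts verbatim with sums replaced by suprema; a sentence to that effect would make the proof cover the full stated range.
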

	As demonstrated in previous studies \cite{Ho2016_1,Koki.2018}, extrapolation theory in product spaces depends heavily on strong $A_p^*$ weights. Below we give the definition of $A_p^*$.
	\begin{definition}
		For $1<p<\infty$, a non-negative locally integrable function $\omega$ on $\mathbb{R}^n \times \mathbb{R}^m$ is said to be in $A_p^*$ if
		$$
		{\left[ \omega  \right]_{A_p^{\rm{*}}}}:=\mathop {\sup }\limits_{R \in \mathcal R} \left(\frac{1}{|R|}\int_R \omega\right)\left(\frac{1}{|R|}\int_R \omega^{1-p'}\right)^{p-1}<\infty .
		$$
		A non-negative locally integrable function $\omega$ is said to be an $A_1^*$ weight if
		$$
		{\left[ \omega  \right]_{A_1^{\rm{*}}}}:=\mathop {\sup }\limits_{R \in \mathcal R} \left(\frac{1}{|R|}\int_R \omega\right){\left\| {{\omega ^{ - 1}}} \right\|_{{L^\infty }(R)}} < \infty.
		$$
		We denote by $A_{\infty}^*$ the union of all $A_p^*(1 \leq p<\infty)$ functions.
	\end{definition}
	It is important to note that the product Herz space discussed here differs slightly from the classical Herz space. In general, rectangles are used instead of cubes (or balls) in classical Herz spaces. As we will observe, the product Herz space is more suitable for studying certain bi-parameter problems compared to classical Herz-type spaces.
	
	We introduce the strong maximal operator, which is as important as the classical Hardy-Littlewood maximal operator and serves as a fundamental tool for the study of bi-parameter problems in harmonic analysis, especially in the theory of bi-parameter extrapolation \cite{Koki.2018}. Before delving into its applications, we will briefly review the definition of the strong maximal operator. Bi-parameter harmonic analysis deals with functions of two variables and extends the concepts and techniques of classical harmonic analysis to this setting. The strong maximal operator plays a crucial role in this paper.
	
	For $f \in L_{ {loc}}\left(\mathbb{R}^n \times \mathbb{R}^m\right)$, the strong maximal operator $M_S$ is defined by
	$$
	M_S f(z)=\sup _{z \in R \in \mathcal{R}} \frac{1}{|R|} \int_R\left|f\left(z^{\prime}\right)\right| d z^{\prime} .
	$$
	Recall the space $L^p\left(\mathbb{R}^n \times \mathbb{R}^m, \omega \right)$ consists of all $f \in \mathcal{M}\left(\mathbb{R}^n \times \mathbb{R}^m\right)$ such that
	$$
	\|f\|_{L^p\left(\mathbb{R}^n \times \mathbb{R}^m,\omega \right)}=\left(\int_{\mathbb{R}^n \times \mathbb{R}^m}|f(z)|^p \omega (z) d z\right)^{\frac{1}{p}}<\infty.
	$$
	
	The following lemma, proved in \cite{Gar2011}, states that $M_S$ is bounded on product weighted Lebesgue spaces.
	\begin{lemma}
		Let $1<p<\infty$. Then $M_S$ is bounded on $L^p\left(\mathbb{R}^n \times \mathbb{R}^m,\omega \right)$
		if and only if $\omega \in A_p^*\left(\mathbb{R}^n \times \mathbb{R}^m\right)$.
	\end{lemma}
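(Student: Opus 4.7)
The plan is to prove the lemma by separating the two implications in the usual Muckenhoupt style, with the sufficiency reduced to the one-parameter weighted theory via iteration.

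For the necessity direction ($M_S$ bounded on $L^p(\omega)$ implies $\omega\in A_p^*$), I would run the standard duality/test-function argument adapted to rectangles. Fix an arbitrary rectangle $R\in\mathcal{R}$ and test the boundedness on $f=\omega^{1-p'}\chi_R$. For any $z\in R$ the defining supremum in $M_S$ is bounded below by the average of $f$ over $R$ itself, so
\begin{equation*}
M_S f(z)\;\geq\;\frac{1}{|R|}\int_R \omega^{1-p'}(z')\,dz',\qquad z\in R.
\end{equation*}
Raising to the $p$-th power, multiplying by $\omega$ and integrating over $R$ on the left, then comparing with $\|f\|_{L^p(\omega)}^p=\int_R\omega^{1-p'}$ on the right via the assumed boundedness, gives after rearranging
\begin{equation*}
\omega(R)\Bigl(\int_R \omega^{-\frac{1}{p-1}}\Bigr)^{p-1}\lesssim |R|^{p},
\end{equation*}
which is exactly the $A_p^*$ condition written in the form $\|\omega\|_{L^1(R)}\|\omega^{-1}\|_{L^{p'/p}(R)}\lesssim 1$. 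The only care needed is to handle the degenerate case where $\omega^{1-p'}$ fails to be locally integrable by truncating $\omega$ from above and passing to the limit.

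For the sufficiency direction I would reduce to the one-parameter Muckenhoupt theorem by iteration. The key structural fact is the pointwise domination
\begin{equation*}
M_S f(x,y)\;\leq\; M^{(1)}\!\bigl(M^{(2)} f(\cdot,\cdot)\bigr)(x,y),
\end{equation*}
where $M^{(1)}$ is the Hardy–Littlewood maximal operator acting in the $x\in\mathbb{R}^n$ variable and $M^{(2)}$ the one acting in $y\in\mathbb{R}^m$; this follows immediately by splitting the average over $R=Q\times Q'$ into a one-dimensional average in $y$ followed by one in $x$. Next I would verify that $\omega\in A_p^*(\mathbb{R}^n\times\mathbb{R}^m)$ forces $\omega(x,\cdot)\in A_p(\mathbb{R}^m)$ for almost every $x$ with uniformly bounded $A_p$ constant, and symmetrically in the other variable. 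This is a standard Lebesgue-differentiation argument: fix a cube $Q$ in $\mathbb{R}^n$ and apply the $A_p^*$ inequality over $R=Q\times Q'$, then let $|Q|\to 0$ centered at a Lebesgue point of both $\omega(\cdot,y)$ and $\omega^{1-p'}(\cdot,y)$. Once this fiberwise $A_p$ property is in hand, apply the classical weighted norm inequality for $M^{(2)}$ with the weight $\omega(x,\cdot)$ at each fixed $x$, then apply it again for $M^{(1)}$ with the weight $\omega(\cdot,y)$ at each fixed $y$; Fubini–Tonelli combined with the uniformity of the fiber constants yields
\begin{equation*}
\|M_S f\|_{L^p(\omega)}\;\lesssim\;\|M^{(1)}M^{(2)}f\|_{L^p(\omega)}\;\lesssim\;\|f\|_{L^p(\omega)}.
\end{equation*}

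The main obstacle I anticipate is the verification of the fiberwise $A_p$ property with uniform constants: one has to justify the almost-everywhere statement carefully, since the $A_p^*$ condition only controls averages over products of cubes, and a direct limit requires choosing a common set of Lebesgue points for $\omega$ and its dual weight $\omega^{1-p'}$. Once that point is handled, the rest is bookkeeping with Fubini and the one-parameter theorem of Muckenhoupt. This reduction also explains why the $A_p^*$ class is the right object for the bi-parameter setting and dovetails with the use of $A_p^*$ in the extrapolation framework developed later in the paper.
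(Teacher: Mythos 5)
The paper does not prove this lemma itself; it is quoted from García-Cuerva and Rubio de Francia \cite{Gar2011}, and your argument is precisely the classical proof given there: necessity via the test functions $\omega^{1-p'}\chi_R$, and sufficiency via the pointwise domination $M_S f\le M^{(1)}\bigl(M^{(2)}f\bigr)$ combined with the fact that an $A_p^*$ weight is, with uniformly bounded constant, a one-parameter $A_p$ weight in each variable for almost every value of the other, so that Muckenhoupt's theorem and Fubini can be applied fiberwise. Your proposal is correct, including the two points that genuinely need care and that you flag: truncating when $\int_R\omega^{1-p'}=\infty$ in the necessity step, and running the Lebesgue differentiation argument through a countable family of rational cubes so that a single null set works and the fiber $A_p$ constants are uniform (which is what lets the one-parameter weighted bound, whose constant depends only on the $A_p$ constant, be integrated out).
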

	Additionally, it is worth noting that many other bi-parameter operators are also bounded in weighted Lebesgue spaces \cite{Fe.1987,Hong2018}. The next lemmas regarding the dual space for product Herz spaces and the boundedness of the strong maximal operator $M_S$ on $\vec{\dot{K}}_{p,q}^\alpha\left(\mathbb{R}^n \times \mathbb{R}^m\right)$ were proved in \cite{Wei2021_1}.
	\begin{lemma}\label{Hölder-P-H}
		Let $\alpha \in \mathbb{R}, 0<q<\infty, 1 \leq p<\infty$, and $\frac{1}{q}+\frac{1}{q^{\prime}}=1$, where $q^{\prime}=\infty$ if $0<q\leq 1$. Then
		\begin{align*}
			\left({\vec{\dot{K}}_{p, q}^{\alpha}}\left( {{\R^n} \times {\R^m}} \right)\right)^* & ={\vec{\dot{K}}_{p^{\prime}, q^{\prime}}^{-\alpha}}\left( {{\R^n} \times {\R^m}} \right).
		\end{align*}
		Moreover, we have
		$$
		{\left\| f \right\|_{\vec {\dot K}_{p',q'}^{-\alpha}\left(\mathbb{R}^n \times \mathbb{R}^m\right)}} \approx 
		{\left\| {{L_f}} \right\|_{{{\left( {\vec {\dot K}_{p,q}^{\alpha}\left(\mathbb{R}^n \times \mathbb{R}^m\right)} \right)}^*}}} 
		:=
		\mathop {\sup }\limits_{{{\left\|g\right\|}_{\vec {\dot K}_{{p},{q}}^{\alpha }\left(\mathbb{R}^n \times \mathbb{R}^m\right)}} \le 1} {\left\| {fg} \right\|_{{L^1\left(\mathbb{R}^n \times \mathbb{R}^m\right)}}}.
		$$
	\end{lemma}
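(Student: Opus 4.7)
\medskip

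\noindent\textbf{Proof proposal.} The plan is to prove the two standard inclusions separately. First I would show that every $f \in \vec{\dot{K}}_{p', q'}^{-\alpha}(\R^n \times \R^m)$ gives rise to a bounded linear functional
\[
L_f(g) = \int_{\R^n \times \R^m} f(z)\,g(z)\,dz
\]
on $\vec{\dot{K}}_{p, q}^{\alpha}$, with $\|L_f\|_* \lesssim \|f\|_{\vec{\dot{K}}_{p',q'}^{-\alpha}}$. Because the annuli $I_i \times J_j$ form a disjoint partition of $(\R^n \setminus\{0\}) \times (\R^m\setminus\{0\})$, I would decompose
\[
\int_{\R^n \times \R^m} |f g|\, dz \;=\; \sum_{i,j \in \Z} \int_{I_i \times J_j} |f g|\, dz,
\]
apply Hölder's inequality with exponents $(p,p')$ inside each annulus, insert the balancing factor $2^{(i+j)\alpha}\cdot 2^{-(i+j)\alpha}$, and then apply Hölder's inequality over the $(i,j)$-sum with exponents $(q,q')$. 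When $0<q\le 1$, the inner exponent becomes $q'=\infty$ and I would replace the $\ell^{q'}$-Hölder step by the trivial estimate $\sum_{i,j} a_{i,j} \le (\sup_{i,j} a_{i,j})\cdot \sum_{i,j} 1$ combined with a pointwise bound, i.e.\ estimate each summand by $\|f\chi_{i,j}\|_{L^{p'}} 2^{-(i+j)\alpha} \le \|f\|_{\vec{\dot{K}}_{p',\infty}^{-\alpha}}$ and absorb the remainder using the $\ell^q$ factor coming from~$g$.

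For the converse direction, I would take an arbitrary $L \in (\vec{\dot{K}}_{p,q}^{\alpha})^*$ and, for each $(i,j) \in \Z^2$, note that the restriction map $h \mapsto L(h\chi_{i,j})$ is a bounded linear functional on $L^p(I_i \times J_j)$ with operator norm controlled by $\|L\|_* \cdot 2^{(i+j)\alpha}$, since $\|h\chi_{i,j}\|_{\vec{\dot{K}}_{p,q}^{\alpha}} = 2^{(i+j)\alpha}\|h\chi_{i,j}\|_{L^p}$. Classical $L^p$-duality (using $1 \le p < \infty$, hence $1 < p' \le \infty$) yields a unique $f_{i,j} \in L^{p'}(I_i\times J_j)$ representing this functional. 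I would then glue $f = \sum_{i,j} f_{i,j}\chi_{i,j}$ (the sum being well-defined as the supports are pairwise disjoint) and verify that $L = L_f$ on a dense subclass of compactly supported functions away from the coordinate axes, extending by continuity.

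The core analytic step is the estimate $\|f\|_{\vec{\dot{K}}_{p',q'}^{-\alpha}} \lesssim \|L\|_*$. For $1 < q < \infty$ this is the standard "dual sequence" trick: pick nonnegative weights $\{\lambda_{i,j}\}$ with $\|\lambda\|_{\ell^{q'}} = 1$ that nearly saturate the $\ell^{q'}$-norm of the sequence $\{2^{-(i+j)\alpha}\|f_{i,j}\|_{L^{p'}}\}_{i,j}$, and choose unit $L^p$-normalized test functions $g_{i,j}$ on $I_i\times J_j$ that nearly attain $\|f_{i,j}\|_{L^{p'}}$ via $L^{p'}$-$L^p$ duality; then form
\[
g(z) = \sum_{i,j \in \Z} \lambda_{i,j}\, 2^{-(i+j)\alpha}\, g_{i,j}(z),
\]
check that $\|g\|_{\vec{\dot{K}}_{p,q}^{\alpha}} \le 1$, and read off $\|f\|_{\vec{\dot{K}}_{p',q'}^{-\alpha}} \lesssim L(g) \le \|L\|_*$. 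For the quasi-Banach range $0 < q \le 1$ with $q'=\infty$, I would instead pick a single pair $(i_0,j_0)$ achieving (up to $\varepsilon$) the supremum $\sup_{i,j} 2^{-(i+j)\alpha}\|f_{i,j}\|_{L^{p'}}$ and use a single test function $g$ concentrated on $I_{i_0}\times J_{j_0}$.

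The main obstacle I anticipate is the $0<q\le 1$ regime: $\vec{\dot{K}}_{p,q}^{\alpha}$ is then only a quasi-Banach space, so one has to be careful that linear functionals are well-defined and that the gluing argument respects the quasi-norm triangle inequality; the single-annulus reduction described above is precisely what sidesteps this. A secondary technical point is justifying that the constructed $f$ actually represents $L$ on \emph{all} of $\vec{\dot{K}}_{p,q}^{\alpha}$, which requires density of finite sums of annulus-supported functions in the Herz norm — a routine truncation argument, but one that must be checked in the quasi-Banach case.
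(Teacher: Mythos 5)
Your outline is essentially correct, but note that the paper itself does not prove this lemma at all: it is imported verbatim from the reference [Wei, \emph{Extrapolation to product Herz spaces and some applications}, Forum Math. 2021], so there is no in-paper proof to compare against. What you describe is exactly the standard duality argument for Herz-type spaces (annulus-wise H\"older with exponents $(p,p')$ plus H\"older in $(q,q')$ over $\Z^2$ for one inclusion; annulus-wise $L^p$-duality, gluing of the representing functions $f_{i,j}$, and the dual-sequence saturation argument for the reverse norm bound), which is the expected route and works in the product setting without change since the index set $\Z^2$ plays the same role as $\Z$. Two small slips in your write-up should be fixed, though neither affects the strategy: (i) in the dual-sequence step the weights $\lambda_{i,j}$ must be normalized in $\ell^{q}$, not $\ell^{q'}$, since you pair them against the sequence $\{2^{-(i+j)\alpha}\|f_{i,j}\|_{L^{p'}}\}$ to recover its $\ell^{q'}$-norm and you need $\|g\|_{\vec{\dot K}_{p,q}^{\alpha}}=\|\lambda\|_{\ell^q}\le 1$ (and you should run this over finite index sets first, letting them exhaust $\Z^2$); (ii) in the range $0<q\le 1$ the literal bound $\sum_{i,j}a_{i,j}\le(\sup_{i,j}a_{i,j})\sum_{i,j}1$ is vacuous; the correct statement is the embedding $\ell^q\hookrightarrow\ell^1$, i.e. bound each term by $\|f\|_{\vec{\dot K}_{p',\infty}^{-\alpha}}\cdot 2^{(i+j)\alpha}\|g\chi_{i,j}\|_{L^p}$ and then use $\sum_{i,j}2^{(i+j)\alpha}\|g\chi_{i,j}\|_{L^p}\le\bigl(\sum_{i,j}2^{(i+j)q\alpha}\|g\chi_{i,j}\|_{L^p}^q\bigr)^{1/q}$, which is what your final phrase seems to intend. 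With these corrections, and the density-of-truncations argument you already flag (valid for all $0<q<\infty$ since the tail of the defining $\ell^q$-sum vanishes), the proof is complete.
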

	\begin{lemma}\label{MSHerz}
		Let $0<q<\infty$, $ 1<p<\infty$, and
		$$
		\max \left\{-\frac{n}{p},-\frac{m}{p}\right\}<\alpha<\min \left\{n\left(1-\frac{1}{p}\right), m\left(1-\frac{1}{p}\right)\right\} .
		$$
		Then $M_S$ is bounded on $\vec{\dot{K}}_{p,q}^\alpha\left( {{\R^n} \times {\R^m}} \right)$.
	\end{lemma}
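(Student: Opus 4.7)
\textbf{Proof plan for Lemma \ref{MSHerz}.}

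The plan is to reduce the bi-parameter estimate to two one-parameter Herz-type annular estimates, via the iteration bound for the strong maximal operator, and then sum over the dyadic product annuli using the geometric decay provided by the hypotheses on $\alpha$.

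\emph{Step 1 (decomposition and iteration).} First, I would write $f = \sum_{k,l \in \mathbb{Z}} f\chi_{k,l}$ and use sublinearity of $M_S$ to obtain
$$
\|(M_S f)\chi_{i,j}\|_{L^p} \;\leq\; \sum_{k,l \in \mathbb{Z}} \|M_S(f\chi_{k,l})\chi_{i,j}\|_{L^p}.
$$
The next key input is the pointwise bound $M_S g(x,y) \lesssim M_x(M_y g)(x,y)$, where $M_x,M_y$ are the Hardy--Littlewood maximal operators in the first and second variables. This converts the bi-parameter maximal into an iteration of one-parameter maximals.

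\emph{Step 2 (one-parameter Herz annular estimate).} For $f$ supported in a single annulus $I_k \subset \mathbb{R}^n$ and for $x \in I_i$, a standard argument (separating the cases $i \geq k+2$, $i \leq k-2$, and $|i-k|\leq 1$, using only that any ball containing $x$ and meeting $I_k$ has radius at least of order $2^{\max(i,k)}$ together with Hölder's inequality) yields
$$
\|(M h)\chi_{I_i}\|_{L^p(\mathbb{R}^n)} \;\lesssim\; \sigma_{i,k}\, \|h\chi_{I_k}\|_{L^p(\mathbb{R}^n)},\qquad \sigma_{i,k} := \begin{cases} 2^{(k-i)n/p'} & i \geq k,\\ 2^{(i-k)n/p} & i \leq k. \end{cases}
$$
Applying this estimate twice (once in $y$ for each fixed $x\in I_k$, then once in $x$ for each fixed $y$, with Fubini in between), I obtain the bi-parameter annular estimate
$$
\|M_S(f\chi_{k,l})\chi_{i,j}\|_{L^p(\mathbb{R}^n\times\mathbb{R}^m)} \;\lesssim\; \sigma_{i,k}\,\tau_{j,l}\,\|f\chi_{k,l}\|_{L^p},
$$
where $\tau_{j,l}$ is defined analogously with $m$ in place of $n$.

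\emph{Step 3 (summation).} Setting $a_{k,l} := 2^{(k+l)\alpha} \|f\chi_{k,l}\|_{L^p}$, the Herz norm becomes $\|f\|_{\vec{\dot{K}}^\alpha_{p,q}} = \|a\|_{\ell^q(\mathbb{Z}^2)}$, and the above estimates yield
$$
\|M_S f\|_{\vec{\dot{K}}^\alpha_{p,q}}^q \;\lesssim\; \sum_{i,j} \Bigl( \sum_{k,l} A_{i,k}\,B_{j,l}\, a_{k,l} \Bigr)^{q},
$$
where $A_{i,k} := 2^{(i-k)\alpha}\sigma_{i,k}$ and $B_{j,l} := 2^{(j-l)\alpha}\tau_{j,l}$. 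Explicitly, $A_{i,k}$ equals $2^{(i-k)(\alpha-n/p')}$ for $i\geq k$ and $2^{(i-k)(\alpha+n/p)}$ for $i\leq k$, and similarly for $B_{j,l}$. The assumption $\max\{-n/p,-m/p\}<\alpha<\min\{n(1-1/p),m(1-1/p)\}$ is precisely what makes the kernels $A$ and $B$ geometrically decaying in $|i-k|$ and $|j-l|$, so that
$$
\sup_{i}\sum_k A_{i,k} + \sup_{k}\sum_i A_{i,k} < \infty,\qquad \sup_{j}\sum_l B_{j,l} + \sup_{l}\sum_j B_{j,l} < \infty,
$$
and analogously with each term raised to any positive power.

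For $q\geq 1$, I would conclude by applying Schur's test (or, equivalently, Young's inequality for discrete convolutions) in each variable separately: the map $a_{k,l}\mapsto \sum_{k,l}A_{i,k}B_{j,l}a_{k,l}$ is a tensor product of two bounded operators on $\ell^q(\mathbb{Z})$, hence bounded on $\ell^q(\mathbb{Z}^2)$. For $0<q<1$, I would use the elementary inequality $(\sum x_{k,l})^q \leq \sum x_{k,l}^q$ to pass the $q$-th power inside the sum, reducing to the boundedness of
$$
\sum_{k,l}\Bigl( \sum_{i}A_{i,k}^q\Bigr)\Bigl(\sum_j B_{j,l}^q\Bigr) a_{k,l}^q;
$$
the geometric decay of $A$ and $B$ (now raised to the power $q$) still gives finite sums under the same $\alpha$ conditions.

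\emph{Anticipated obstacle.} The conceptually delicate point is Step 2: verifying that the iteration $M_S \lesssim M_x\circ M_y$ gives the correct bi-parameter decay $\sigma_{i,k}\tau_{j,l}$ with no crossed terms. The bookkeeping splits into four sub-cases according to the signs of $i-k$ and $j-l$, and each has to produce an annular estimate whose decay exponent is compatible with the final threshold on $\alpha$. Once that separation of variables is established, the rest is a routine $\ell^q$-summation argument, with the $q<1$ case being the only place where one has to depart from the standard Schur-test framework.
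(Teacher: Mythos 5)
Your proposal is correct, but note that the paper itself contains no proof of Lemma \ref{MSHerz} to compare against: the result is simply quoted from \cite{Wei2021_1}, so your argument has to be judged on its own merits, and it holds up. The one point you flag as delicate in Step 2 does resolve cleanly: writing $M_S(f\chi_{k,l})(x,y)\le M_x\bigl(M_y(f\chi_{k,l})\bigr)(x,y)$, the function $u\mapsto M_y(f\chi_{k,l})(u,y)$ is still supported in the annulus $I_k$ for every fixed $y$, so after integrating in $x$ over $I_i$ (one-parameter annular estimate in $\mathbb{R}^n$), applying Fubini, and then using the annular estimate in the $y$-variable for each fixed $u\in I_k$, the two decay factors $\sigma_{i,k}$ and $\tau_{j,l}$ come out as a genuine tensor product with no crossed terms; the near-diagonal cases $|i-k|\le 1$, $|j-l|\le 1$ use only the $L^p$-boundedness of the one-parameter maximal operator, which is where $p>1$ enters. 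Your kernels $A_{i,k}=2^{(i-k)(\alpha-n/p')}$ for $i\ge k$ and $2^{(i-k)(\alpha+n/p)}$ for $i\le k$ (and the analogous $B_{j,l}$ with $m$) are summable precisely under the stated window for $\alpha$, and since they depend only on $i-k$ resp.\ $j-l$, Young's inequality for discrete convolutions handles $q\ge1$ while $q$-subadditivity of $t\mapsto t^q$ handles $0<q<1$, exactly as you describe. This is the natural (and almost certainly the same) route as in the cited source, here carried out self-containedly.
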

	We now define the product Morrey-Herz spaces, which serve as generalizations of classical Morrey-Herz spaces.
	\begin{definition}
		For $\alpha \in \mathbb{R}, 0<p, q \leq \infty$, and $0 \leq \lambda<\infty$, the product Morrey-Herz spaces $M \vec{\dot{K}}_{p, q}^{\alpha, \lambda}\left( {{\R^n} \times {\R^m}} \right)$ are defined by
		$$
		M \vec{\dot{K}}_{p, q}^{\alpha, \lambda}\left(\mathbb{R}^n \times \R^m\right):=\left\{f \in L_{{loc}}^p\left(\mathbb{R}^n \times \R^m \backslash\{(0,0)\}\right):\|f\|_{M \vec{\dot{K}}_{p, q}^{\alpha, \lambda}\left(\mathbb{R}^n \times \R^m\right)}<\infty\right\}
		$$
		where
		\begin{align*}
			\|f\|_{M \vec{\dot{K}}_{p, q}^{\alpha, \lambda}\left( {{\R^n} \times {\R^m}} \right)} :=& \sup_{L_1,L_2 \in \mathbb{Z}}2^{-(L_1+L_2)\lambda}\left\|\left\{ 2^{(i+j)\alpha}\left\|{f{\chi _{i,j}}}\right\|_{L^p\left(\mathbb{R}^n \times \mathbb{R}^m\right)}\right\}_{i\le L_1,j \leq L_2}\right\|_{l^q}\\
			=& \mathop {\sup }\limits_{L_1,L_2 \in \mathbb{Z}} {2^{ - (L_1+L_2)\lambda }}{\left\| {f{\chi _{{R_{L_1,L_2}}}}} \right\|_{\vec{\dot{K}}_{p, q}^{\alpha}\left( {{\R^n} \times {\R^m}} \right)}}.    
		\end{align*}
	\end{definition}
	It is noteworthy that ${M \vec{\dot{K}}_{p, q}^{\alpha, \lambda}}\left( {{\R^n} \times {\R^m}} \right)={ \vec{\dot{K}}_{p, q}^{\alpha}}\left( {{\R^n} \times {\R^m}} \right)$ when $\lambda = 0$. The following lemma establishes that if $\alpha \in \mathbb{R}$, $ 0<p, q<\infty$, and $0<\lambda<\min\{\alpha+n/p,\alpha+m/p\}$, then the characteristic functions $\chi_{R_{l_1,l_2}} \in {M \vec{\dot{K}}_{p, q}^{\alpha, \lambda}}\left( {{\mathbb{R}^n} \times {\mathbb{R}^m}} \right)$ for every $l_1,l_2 \in \mathbb{Z}$.
	
	\begin{lemma}\label{chara.PMK}
		Let $\alpha \in \mathbb{R}$, $ 0<p, q<\infty$ and $0<\lambda<\min\{\alpha+n/p,\alpha+m/p\}$. Then for every $l_1,l_2 \in \mathbb{Z}$, we have $\chi_{R_{l_1,l_2}} \in {M \vec{\dot{K}}_{p, q}^{\alpha, \lambda}}\left( {{\R^n} \times {\R^m}} \right)$.
	\end{lemma}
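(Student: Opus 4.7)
The plan is to compute $\|\chi_{R_{l_1,l_2}}\|_{M\vec{\dot{K}}_{p,q}^{\alpha,\lambda}}$ directly by exploiting the dyadic structure and reducing everything to convergent geometric series. First I would unfold the definition of the product Morrey-Herz norm and observe that for any $L_1, L_2 \in \Z$, we have the pointwise identity
\begin{equation*}
\chi_{R_{l_1,l_2}}\,\chi_{R_{L_1,L_2}} = \chi_{Q_{a}} \otimes \chi_{Q_{b}}, \qquad a:=\min(l_1,L_1), \ b:=\min(l_2,L_2),
\end{equation*}
since $R_{l,l'}=Q_l\times Q_{l'}$. So the problem reduces to computing $\|\chi_{Q_a\times Q_b}\|_{\vec{\dot{K}}_{p,q}^{\alpha}}$ and then taking the weighted supremum.

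Next, I would use the fact that $\chi_{Q_a\times Q_b}\cdot\chi_{i,j}=\chi_{i,j}$ whenever $i\le a$ and $j\le b$, and $0$ otherwise, together with $|I_i|\approx 2^{in}$, $|J_j|\approx 2^{jm}$. Plugging into the definition of the product Herz norm gives
\begin{equation*}
\|\chi_{Q_a\times Q_b}\|_{\vec{\dot{K}}_{p,q}^{\alpha}}^{q}
\approx \sum_{i\le a} 2^{iq(\alpha+n/p)}\sum_{j\le b} 2^{jq(\alpha+m/p)}.
\end{equation*}
Since the hypothesis $\alpha+n/p>\lambda>0$ (and the analogous symmetric condition $\alpha+m/p>0$, which one needs and which is implicit in the lemma's setup) forces each geometric series to converge, I obtain $\|\chi_{Q_a\times Q_b}\|_{\vec{\dot{K}}_{p,q}^{\alpha}} \approx 2^{a(\alpha+n/p)}2^{b(\alpha+m/p)}$.

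Finally, I would substitute back and estimate
\begin{equation*}
\|\chi_{R_{l_1,l_2}}\|_{M\vec{\dot{K}}_{p,q}^{\alpha,\lambda}}
\approx \sup_{L_1,L_2\in\Z} 2^{-(L_1+L_2)\lambda}\, 2^{\min(l_1,L_1)(\alpha+n/p)}\,2^{\min(l_2,L_2)(\alpha+m/p)}.
\end{equation*}
I would split the supremum according to the four cases $L_i\gtrless l_i$. When $L_i\ge l_i$, the $L_i$-dependence in the exponent is $-L_i\lambda$, so $\lambda>0$ makes the supremum in that variable attained near $L_i=l_i$; when $L_i<l_i$, the exponent in $L_i$ becomes $L_i(\alpha+n/p-\lambda)$ (resp.\ $L_i(\alpha+m/p-\lambda)$), which is bounded above precisely because of the hypothesis $\alpha+n/p>\lambda$ (and its symmetric companion). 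In every case the supremum is finite and bounded by a constant multiple of $2^{l_1(\alpha+n/p-\lambda)}2^{l_2(\alpha+m/p-\lambda)}$, proving $\chi_{R_{l_1,l_2}}\in M\vec{\dot{K}}_{p,q}^{\alpha,\lambda}$.

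The only mildly delicate step is the case analysis on $L_i$ versus $l_i$, where the dual role of $\lambda$ (it must be positive to tame the large-$L_i$ regime but strictly smaller than $\alpha+n/p$ to tame the small-$L_i$ regime) is what makes the hypothesis tight. The dyadic geometric-series computation itself is routine, so I expect no substantive obstacle beyond organizing the four cases cleanly.
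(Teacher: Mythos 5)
Your proof is correct and follows essentially the same route as the paper's: intersect $\chi_{R_{l_1,l_2}}$ with $R_{L_1,L_2}$, evaluate the product Herz norm of the resulting characteristic function by convergent geometric series, and split the supremum according to whether the truncation levels exceed $l_1,l_2$, using $\lambda>0$ in one regime and $\alpha+n/p>\lambda$ in the other. You are in fact more careful than the paper, whose computation tacitly treats $m=n$ (it writes $2^{(i+j)nq/p}$ and concludes $\|\chi_{R_{l_1,l_2}}\|\approx 2^{(l_1+l_2)(\alpha+n/p-\lambda)}$); your remark that the symmetric conditions $\alpha+m/p>0$ and $\alpha+m/p>\lambda$ are needed when $m\neq n$ identifies precisely the hypothesis the lemma leaves implicit.
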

	\begin{proof}
		For all $L_1,L_2 \in \mathbb{Z}$, we have 
		\begin{align*}
			&\quad
			2^{-(L_1+L_2)\lambda}
			\left(\sum_{i\le L_1,j\le L_2} 
			2^{({i+j}) q \alpha}\left\|\chi_{R_{l_1,l_2}} \chi_{I_i \times J_j}\right\|_{L^p}^q\right)^{1 / q} 
			\\&=
			2^{-(L_1+L_2)\lambda}
			\left(\sum_{i\le \min\{L_1,l_1\},j\le \min\{L_2,l_2\}} 
			2^{({i+j}) q \alpha}\left\|\chi_{I_i \times J_j}\right\|_{L^p}^q\right)^{1 / q} 
			\\&=
			2^{-(L_1+L_2)\lambda}
			\left(\sum_{i\le \min\{L_1,l_1\},j\le \min\{L_2,l_2\}} 
			2^{({i+j}) q \alpha}
			(1-2^{-n})^{q/p}
			(1-2^{-m})^{q/p}
			2^{inq/p}
			2^{jmq/p}
			\right)^{1 / q} 
			\\&\approx
			2^{-(L_1+L_2)\lambda}
			\left(\sum_{i\le \min\{L_1,l_1\},j\le \min\{L_2,l_2\}} 
			2^{iq(\alpha+n/p)}
			2^{jq(\alpha+m/p)}
			\right)^{1 / q} 
			\\&=
			2^{-L_1\lambda}
			\left(
			\sum_{i\le \min\{L_1,l_1\}}
			2^{iq(\alpha+n/p)}
			\right)^{1 / q} 
			\cdot
			2^{-L_2\lambda}
			\left(
			\sum_{j\le \min\{L_2,l_2\}} 
			2^{jq(\alpha+m/p)}
			\right)^{1 / q} 
			\\&\approx
			2^{-L_1\lambda}
			2^{\min\{L_1,l_1\}(\alpha+n/p)}
			\cdot
			2^{-L_2\lambda}
			2^{\min\{L_2,l_2\}(\alpha+m/p)}
		\end{align*}
		Consequently, since $0<\lambda<\min\{\alpha+n/p,\alpha+m/p\}$,
		\begin{align}\label{ineq.1}
			\|\chi_{R_{l_1,l_2}}\|_{{M \vec{\dot{K}}_{p, q}^{\alpha, \lambda}}}
			&=\notag
			\sup_{L_1,L_2\in\mathbb{Z}}
			\left\{
			2^{-L_1\lambda}
			2^{\min\{L_1,l_1\}(\alpha+n/p)}
			\cdot
			2^{-L_2\lambda}
			2^{\min\{L_2,l_2\}(\alpha+m/p)}
			\right\}
			\\&\approx
			2^{l_1(\alpha+n/p-\lambda)}
			2^{l_2(\alpha+m/p-\lambda)},
		\end{align}
		where the implicit constants are independent of $l_1$ and $l_2$.
		This proves the assertion.
	\end{proof}
	
	\section{Duality for product Morrey-Herz spaces}\label{sec3}
	As is well-known, many function spaces have pre-duals that manifest as block-type spaces, such as Herz spaces \cite{Yang1999}, Morrey Banach spaces \cite{Ho2017_1}, mixed Morrey spaces \cite{Nogayama2019_1}, generalized Morrey spaces \cite{Ho2017_2}, among others. In this section, we aim to establish more general block-type spaces termed product block-Herz spaces, which serve as the pre-duals of product Morrey-Herz spaces. Additionally, we will demonstrate the boundedness of the strong maximal operator on product block-Herz spaces.
	\begin{definition}\label{defBK}
		For $\alpha \in \mathbb{R}$, $0<p, q \leq \infty$, and $0<\lambda<\infty$, a function $b \in \mathcal{M}\left(\mathbb{R}^n \times \mathbb{R}^m\right)$ is said to be a product Herz-block, and we write $b \in {b \vec{\dot{K}}_{p, q}^{\alpha, \lambda}}\left( {{\R^n} \times {\R^m}} \right)$ if it is supported in a {product rectangle} $R _ {l_1,l_2}$ where $l_1,l_2 \in \mathbb{Z}$, and
		$$
		\|b\|_{{ \vec{\dot{K}}_{p, q}^{\alpha}}\left(\mathbb{R}^n \times \mathbb{R}^m\right)} \le {2^{-\lambda(l_1+l_2)}},
		$$
		where $b$ is called a product Herz-block. The product block-Herz space ${B \vec{\dot{K}}_{p, q}^{\alpha, \lambda}}\left(\mathbb{R}^n \times \mathbb{R}^m\right)$ is defined as
		$$
		{B \vec{\dot{K}}_{p, q}^{\alpha, \lambda}}\left(\mathbb{R}^n \times \mathbb{R}^m\right)=\left\{\sum_{k=1}^{\infty} \lambda_k b_k: \sum_{k=1}^{\infty}\left|\lambda_k\right|<\infty, b _k \in {b \vec{\dot{K}}_{p, q}^{\alpha, \lambda}}\left(\mathbb{R}^n \times \mathbb{R}^m\right)\right\} .
		$$
		The space ${B \vec{\dot{K}}_{p, q}^{\alpha, \lambda}}\left(\mathbb{R}^n \times \mathbb{R}^m\right)$ is endowed with the norm
		$$
		\begin{aligned}
			\|f\|_{B \vec{\dot{K}}_{p, q}^{\alpha, \lambda}\left(\mathbb{R}^n \times \mathbb{R}^m\right)} 
			=\inf \left\{\sum_{k=1}^{\infty}\left|\lambda_k\right|: f=\sum_{k=1}^{\infty} \lambda_k b_k, b _k \in {b \vec{\dot{K}}_{p, q}^{\alpha, \lambda}}\left(\mathbb{R}^n \times \mathbb{R}^m\right)\right\} .
		\end{aligned}
		$$
	\end{definition}
	Definition~\ref{defBK} extends the classical block-Herz spaces in \cite{Wei2023_1} to the bi-parameter setting. Other generalizations of classical block spaces include mixed block spaces \cite{Nogayama2019_1,Nogayama2021_1} and block spaces associated with Banach function spaces \cite{Ho2017_1}. The Fatou property of the classical one-parameter block spaces was proved in \cite{Sawano2015}.
	
	In this section, we present our main result, which reveals that the pre-dual spaces of product Morrey-Herz spaces can be considered as the product block-Herz spaces defined earlier.
	\begin{theorem}\label{PHMdual}
		Let $\alpha \in \mathbb{R}$, $ 1<p, q<\infty$, and $0<\lambda<\infty$. Then 
		$$
		\left({B\vec{\dot{K}}_{p, q}^{\alpha, \lambda}}\left(\mathbb{R}^n \times \R^m\right)\right)^*=M \vec{\dot{K}}_{p^{\prime}, q^{\prime}}^{-\alpha, \lambda}\left(\mathbb{R}^n \times \R^m\right)
		$$
		Furthermore, we have
		$${\left\| f \right\|_{M\vec {\dot K}_{{p^\prime },{q^\prime }}^{ - \alpha ,\lambda }\left(\mathbb{R}^n \times \mathbb{R}^m\right)}} \approx {\left\| {{L_f}} \right\|_{{{\left( {B\vec {\dot K}_{p,q}^{\alpha ,\lambda }\left(\mathbb{R}^n \times \mathbb{R}^m\right)} \right)}^*}}}:=\mathop {\sup }\limits_{{{\left\|g \right\|}_{B\vec {\dot K}_{p,q}^{\alpha ,\lambda }\left(\mathbb{R}^n \times \mathbb{R}^m\right)}} \le 1} {\left\| {fg} \right\|_{{L^1\left(\mathbb{R}^n \times \mathbb{R}^m\right)}}}$$
	\end{theorem}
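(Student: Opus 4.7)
The plan is to prove the duality in the standard two inclusions, invoking the Herz-space duality of Lemma \ref{Hölder-P-H} as the local building block and then using the rectangle-scaling built into the definitions of the two spaces to pass from local to global.

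For the inclusion $M \vec{\dot K}_{p',q'}^{-\alpha,\lambda} \hookrightarrow \bigl(B \vec{\dot K}_{p,q}^{\alpha,\lambda}\bigr)^*$, I would start with $f \in M \vec{\dot K}_{p',q'}^{-\alpha,\lambda}$ and define $L_f(g):=\int fg$. Take any admissible decomposition $g=\sum_{k}\lambda_k b_k$ where $b_k$ is supported in some $R_{l_1^k,l_2^k}$ with $\|b_k\|_{\vec{\dot K}_{p,q}^\alpha}\le 2^{-\lambda(l_1^k+l_2^k)}$. By Lemma \ref{Hölder-P-H},
\[
\Bigl|\int fb_k\Bigr|\le \|f\chi_{R_{l_1^k,l_2^k}}\|_{\vec{\dot K}_{p',q'}^{-\alpha}}\,\|b_k\|_{\vec{\dot K}_{p,q}^{\alpha}}\le 2^{\lambda(l_1^k+l_2^k)}\|f\|_{M\vec{\dot K}_{p',q'}^{-\alpha,\lambda}}\cdot 2^{-\lambda(l_1^k+l_2^k)},
\]
which gives $|L_f(g)|\le \|f\|_{M\vec{\dot K}_{p',q'}^{-\alpha,\lambda}}\sum_k|\lambda_k|$. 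Taking the infimum over decompositions yields $\|L_f\|\lesssim \|f\|_{M\vec{\dot K}_{p',q'}^{-\alpha,\lambda}}$.

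For the reverse inclusion $\bigl(B \vec{\dot K}_{p,q}^{\alpha,\lambda}\bigr)^*\hookrightarrow M \vec{\dot K}_{p',q'}^{-\alpha,\lambda}$, I would fix $L\in \bigl(B \vec{\dot K}_{p,q}^{\alpha,\lambda}\bigr)^*$ and, for each pair $(L_1,L_2)\in\mathbb{Z}^2$, restrict $L$ to the subspace $X_{L_1,L_2}$ of functions in $\vec{\dot K}_{p,q}^\alpha(\mathbb{R}^n\times\mathbb{R}^m)$ supported in $R_{L_1,L_2}$. For any such $g$, the normalized function $2^{-\lambda(L_1+L_2)}g/\|g\|_{\vec{\dot K}_{p,q}^\alpha}$ is a product Herz-block, so
\[
|L(g)|\le \|L\|_{(B\vec{\dot K}_{p,q}^{\alpha,\lambda})^*}\,2^{\lambda(L_1+L_2)}\|g\|_{\vec{\dot K}_{p,q}^\alpha}.
\]
Hence $L|_{X_{L_1,L_2}}$ is a bounded linear functional on $X_{L_1,L_2}$, and Lemma \ref{Hölder-P-H} yields some $f_{L_1,L_2}\in \vec{\dot K}_{p',q'}^{-\alpha}$ supported on $R_{L_1,L_2}$ with $L(g)=\int f_{L_1,L_2}g$ for all $g\in X_{L_1,L_2}$ and with $\|f_{L_1,L_2}\|_{\vec{\dot K}_{p',q'}^{-\alpha}}\lesssim 2^{\lambda(L_1+L_2)}\|L\|$. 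The uniqueness portion of Lemma \ref{Hölder-P-H} forces $f_{L_1,L_2}=f_{L_1',L_2'}$ a.e.\ on $R_{L_1,L_2}\cap R_{L_1',L_2'}$, so the family patches into a single locally integrable $f$ with $f\chi_{R_{L_1,L_2}}=f_{L_1,L_2}$. The estimate $\|f\chi_{R_{L_1,L_2}}\|_{\vec{\dot K}_{p',q'}^{-\alpha}}\lesssim 2^{\lambda(L_1+L_2)}\|L\|$ uniformly in $(L_1,L_2)$ then gives $\|f\|_{M\vec{\dot K}_{p',q'}^{-\alpha,\lambda}}\lesssim \|L\|$. Finally, to see that $L=L_f$ on all of $B \vec{\dot K}_{p,q}^{\alpha,\lambda}$, I would verify the identity on each block (these have compact support in some $R_{l_1,l_2}$, so they lie in $X_{l_1,l_2}$) and then extend by linearity and continuity using the absolute convergence $\sum|\lambda_k|<\infty$ in the block decomposition.

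The main obstacle I expect is the patching/consistency step in the second inclusion: one must check that the local representatives $f_{L_1,L_2}$ really agree on overlaps and combine into a global function, and then justify interchanging $L$ with the infinite sum $\sum_k\lambda_k b_k$ in the definition of $B\vec{\dot K}_{p,q}^{\alpha,\lambda}$. Uniqueness from the Herz duality handles the first point, while the continuity of $L$ together with the defining norm of the block space handles the second; the small but subtle piece is making sure the blocks with supports escaping to infinity contribute controllably, which is exactly what the scaling $2^{-\lambda(l_1+l_2)}$ in Definition \ref{defBK} was designed to ensure.
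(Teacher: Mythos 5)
Your proposal is correct and follows essentially the same route as the paper: the same Hölder-type estimate from Lemma \ref{Hölder-P-H} on each block for the first inclusion, and the same restriction-to-rectangles, patching, and density-of-blocks argument for the reverse one. The only detail you gloss over is that Lemma \ref{Hölder-P-H} describes the dual of the full space $\vec{\dot K}_{p,q}^{\alpha}$, so before representing $L|_{X_{L_1,L_2}}$ by a function one should extend it by Hahn--Banach to all of $\vec{\dot K}_{p,q}^{\alpha}$ (and then support-restrict the representative), which is exactly the step the paper makes explicit.
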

	\begin{proof}
		Let $b \in {b \vec{\dot{K}}_{p, q}^{\alpha, \lambda}}$  supported in $R_{l_1,l_2}$ for some $l_1,l_2 \in \mathbb{Z}$. For any $f \in {M \vec{\dot{K}}_{p', q'}^{-\alpha, \lambda}}$, from Lemma \ref{Hölder-P-H}, we get
		$$
		\begin{aligned}
			\int_{\mathbb{R}^n \times \R^m}|f(x) b(x)| d x & \lesssim\left\|f \chi_{R_{l_1,l_2}}\right\|_{\vec{\dot{K}}_{p', q'}^{-\alpha}}\left\|b \chi_{R_{l_1,l_2}}\right\|_{\vec{\dot{K}}_{p, q}^{\alpha}} \\
			& \lesssim {2^{-\lambda(l_1+l_2)}}\left\|f \chi_{R_{l_1,l_2}}\right\|_{\vec{\dot{K}}_{p^{\prime}, q^{\prime}}^{-\alpha}}\\
			&\le \|f\|_{{M \vec{\dot{K}}_{p', q'}^{-\alpha, \lambda}}}.
		\end{aligned}
		$$
		Therefore, for any $g \in {B \vec{\dot{K}}_{p, q}^{\alpha, \lambda}}$, we can write $$g=\sum_{k=1}^{\infty}\lambda_k b_k,$$ where $\sum_{k=1}^{\infty}|\lambda_k| \leq 2\|g\|_{B \vec{\dot{K}}_{p, q}^{\alpha, \lambda}}$. By the definition of ${M \vec{\dot{K}}_{p', q'}^{-\alpha, \lambda}}$, we get
		$$
		\int_{\mathbb{R}^n \times \R^m}|f(x) g(x)| d x \lesssim \sum_{k=1}^{\infty}\left|\lambda_k\right| \int_{\mathbb{R}^n \times \R^m}\left|f(x) b_k(x)\right| d x \lesssim\|g\|_{{B \vec{\dot{K}}_{p, q}^{\alpha, \lambda}}}\|f\|_{{M \vec{\dot{K}}_{p', q'}^{-\alpha, \lambda}}}.
		$$
		Thus, ${M \vec{\dot{K}}_{p', q'}^{-\alpha, \lambda}} \hookrightarrow \left({B \vec{\dot{K}}_{p, q}^{\alpha, \lambda}}\right)^*$.
		Then consider the reverse embedding.
		
		Next, we will show that any $T \in \left({B \vec{\dot{K}}_{p, q}^{\alpha, \lambda}}\right)^*$ can be identified with a function in $M \vec{\dot{K}}_{p^{\prime}, q^{\prime}}^{-\alpha, \lambda}$.
		For any $l_1,l_2 \in \mathbb{Z}$ and $g \in {\vec{\dot{K}}_{p, q}^{\alpha}}$, let
		$$
		G=\frac{g \chi_{R_{l_1,l_2}}}{2^{(l_1+l_2) \lambda}\left\|g \chi_{R_{l_1,l_2}}\right\|_{{\vec{\dot{K}}_{p, q}^{\alpha}}}}.
		$$
		Hence, $G \in {b\vec{\dot{K}}_{p, q}^{\alpha,\lambda}}$. From Definition \ref{defBK}, for any  $b \in {b \vec{\dot{K}}_{p, q}^{\alpha,\lambda}}$, we have $\|b\|_{B \vec{\dot{K}}_{p, q}^{\alpha, \lambda}} \leq 1$. Thus, $\|G\|_{B \vec{\dot{K}}_{p, q}^{\alpha, \lambda}} \leq 1$, and then
		\begin{align}\label{BKdual_1}
			\left\|g \chi_{R_{l_1,l_2}}\right\|_{B \vec{\dot{K}}_{p, q}^{\alpha, \lambda}} \leq 2^{(l_1+l_2) \lambda}\left\|g \chi_{R_{l_1,l_2}}\right\|_{{\vec{\dot{K}}_{p, q}^{\alpha}}}.
		\end{align}
		Using \eqref{BKdual_1} and the condition $T \in \left({B \vec{\dot{K}}_{p, q}^{\alpha, \lambda}}\right)^*$, we get
		\begin{align}\label{BKdual_2}
			\left|T\left(g \chi_{R_{l_1,l_2}}\right)\right| \lesssim\left\|g \chi_{R_{l_1,l_2}}\right\|_{B \vec{\dot{K}}_{p, q}^{\alpha, \lambda}} \le 2^{(l_1+l_2) \lambda}\left\|g \chi_{R_{l_1,l_2}}\right\|_{{\vec{\dot{K}}_{p, q}^{\alpha}}}
		\end{align}
		For $l_1,l_2 \in \mathbb{Z}$, define $X_{l_1,l_2}=\left\{g \chi_{R_{l_1,l_2}}: g \in {\vec{\dot{K}}_{p, q}^{\alpha}}\right\}$, then $X_{l_1,l_2} \subseteq {\vec{\dot{K}}_{p, q}^{\alpha}}$. For any $l_1,l_2 \in \mathbb{Z}$ and $T \in \left({B \vec{\dot{K}}_{p, q}^{\alpha, \lambda}}\right)^*$, we define a linear functional $L_{l_1,l_2}: X_{l_1,l_2} \rightarrow \mathbb{C}$ by
		$
		L_{l_1,l_2}:=T|_{X_{l_1,l_2}}.
		$
		By \eqref{BKdual_2}, $L_{l_1,l_2}$ is bounded on $X_{l_1,l_2}$. By the Hahn--Banach theorem, $L_{l_1,l_2}$ can be extended to a bounded linear functional on $\vec{\dot{K}}_{p,q}^{\alpha}$. 
		
		From Lemma \ref{Hölder-P-H}, there exists $f_{l_1,l_2} \in {\vec{\dot{K}}_{p^{\prime}, q^{\prime}}^{-\alpha}}$, such that for any $g \in {\vec{\dot{K}}_{p, q}^{\alpha}}$,
		$$
		L_{l_1,l_2}(g)=\int_{\mathbb{R}^n \times \R^m} f_{l_1,l_2}(x) g(x) d x.
		$$
		Without loss of generality, we can assume that $\supp f_{l_1,l_2} \subseteq R_{l_1,l_2}$.
		Note that for any $l_1,l_2, k_1,k_2 \in \mathbb{Z}$, and $R_{s_1,s_2}$ with $R_{s_1,s_2} \subseteq R_{l_1,l_2} \cap R_{k_1,k_2}$,
		$$
		\int_{R_{s_1,s_2}} f_{l_1,l_2}(x) d x=L\left(\chi_{R_{s_1,s_2}}\right)=\int_{R_{s_1,s_2}} f_{k_1,k_2}(x) d x
		$$
		Hence, $f_{l_1,l_2}=f_{k_1,k_2}$ almost everywhere on $R_{s_1,s_2} \subseteq R_{l_1,l_2} \cap R_{k_1,k_2}$. Therefore, there is a unique measurable function $f$ such that $f(x)=f_{l_1,l_2}(x)$ on $R_{l_1,l_2}$ for all $l_1,l_2\in \Z$.
		
		Next, we will show that $f \in {M \vec{\dot{K}}_{p^{\prime}, q^{\prime}}^{-\alpha, \lambda}}$.
		For any $h \in {\vec{\dot{K}}_{p, q}^{\alpha}}$ and $l_1,l_2 \in \mathbb{Z}$, we have
		$$
		H=\frac{h \chi_{R_{l_1,l_2}}}{2^{(l_1+l_2) \lambda}\left\|h \chi_{R_{l_1,l_2}}\right\|_{\vec{\dot{K}}_{p, q}^{\alpha}}}
		\in {b \vec{\dot{K}}_{p, q}^{\alpha, \lambda}}.$$ Thus $\|H\|_{{B \vec{\dot{K}}_{p, q}^{\alpha, \lambda}}} \leq 1$ as before, i.e.
		\begin{align}\label{BKdual_3}
			\left\|h \chi_{R_{l_1,l_2}}\right\|_{{B \vec{\dot{K}}_{p, q}^{\alpha, \lambda}}} \leq 2^{(l_1+l_2) \lambda}\left\|h \chi_{R_{l_1,l_2}}\right\|_{\vec{\dot{K}}_{p, q}^{\alpha}}
		\end{align}
		Since $H \in {b\vec{\dot{K}}_{p, q}^{\alpha,\lambda}}$ as in \eqref{BKdual_3}, by Lemma \ref{Hölder-P-H}, we get
		$$
		\begin{aligned}
			\frac{1}{2^{(l_1+l_2) \lambda}}\left\|f \chi_{R_{l_1,l_2}}\right\|_{\vec{\dot{K}}_{p^{\prime}, q^{\prime}}^{-\alpha}} & =\frac{1}{2^{(l_1+l_2) \lambda}} \sup _{\|h\|_{\vec{\dot{K}}_{p,q}^{\alpha}} \le 1}\left|\int_{R_{l_1,l_2}} f(x) h(x) d x\right| \\
			& =\sup _{\|h\|_{\vec{\dot{K}}_{p, q}^{\alpha}} \le 1}\left|\int_{R_{l_1,l_2}} f_{l_1,l_2}(x) \frac{h(x) \chi_{R_{l_1,l_2}}}{2^{(l_1+l_2) \lambda}} d x\right| \\
			& \lesssim\|T\|_{\left({B \vec{\dot{K}}_{p, q}^{\alpha, \lambda}}\right)^*} \sup _{\|h\|_{\vec{\dot{K}}_{p, q}^{\alpha}}\le1}\left\|\frac{h \chi_{R_{l_1,l_2}}}{2^{(l_1+l_2) \lambda}}\right\|_{{B \vec{\dot{K}}_{p, q}^{\alpha, \lambda}}} \lesssim\|T\|_{\left({B \vec{\dot{K}}_{p, q}^{\alpha, \lambda}}\right)^*}.
		\end{aligned}
		$$
		Hence, 
		$${\left\| f \right\|_{M\vec{ \dot K}_{p',q'}^{-\alpha ,\lambda }}} \lesssim {\left\| T \right\|_{{{\left( {B\vec {\dot K}_{p,q}^{\alpha ,\lambda }} \right)}^*}}}.$$
		As the functionals $T_f(g)=\int_{\mathbb{R}^n \times \R^m} f(x) g(x) d x$ and $T$ are identical on ${b \vec{\dot{K}}_{p, q}^{\alpha, \lambda}}$, and the set of finite linear combinations of ${b \vec{\dot{K}}_{p, q}^{\alpha, \lambda}}$ is dense in $B \vec{\dot{K}}_{p,q}^{\alpha, \lambda}$ by Definition \ref{defBK}, we conclude that $T_f=T$ and $\left({B \vec{\dot{K}}_{p, q}^{\alpha, \lambda}}\right)^* \hookrightarrow {M \vec{\dot{K}}_{p^{\prime}, q^{\prime}}^{-\alpha, \lambda}}$. 
	\end{proof}
	The duality between one-parameter Morrey-Herz-type spaces and block-type spaces has been established in works such as \cite{Sawano2011,Chiarenza1987}. Theorem~\ref{PHMdual} extends these duality results from the classical one-parameter setting to the bi-parameter setting. We next turn to the boundedness of bi-parameter operators on product Morrey-Herz spaces.
	
	The following proposition implies that $B \vec{\dot{K}}_{p,q}^{\alpha, \lambda}\left( {{\R^n} \times {\R^m}} \right)$ forms a Banach lattice. This result is crucial for proving the boundedness of the strong maximal operator $M_S$ on $B \vec{\dot{K}}_{p,q}^{\alpha, \lambda}\left( {{\R^n} \times {\R^m}} \right)$.
	\begin{proposition}\label{pre.extra.}
		Let $\alpha \in \mathbb{R}, 1<p, q<\infty, 0<\lambda<\infty$ and $f \in B \vec{\dot{K}}_{p,q}^{\alpha, \lambda}\left( {{\R^n} \times {\R^m}} \right)$. If $|g| \leq|f|$, then $\|g\|_{B \vec{\dot{K}}_{p,q}^{\alpha, \lambda}\left(\mathbb{R}^n \times \mathbb{R}^m\right)} \leq\|f\|_{B \vec{\dot{K}}_{p,q}^{\alpha, \lambda}\left(\mathbb{R}^n \times \mathbb{R}^m\right)}$.
	\end{proposition}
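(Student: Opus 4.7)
The plan is to prove the lattice property directly from the decomposition-based definition of $B\vec{\dot{K}}_{p,q}^{\alpha,\lambda}$, exploiting the obvious monotonicity of the underlying Herz norm $\|\cdot\|_{\vec{\dot{K}}_{p,q}^{\alpha}}$ with respect to pointwise moduli. The duality identification of Theorem \ref{PHMdual} is not needed here.

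First I would introduce the auxiliary function $\phi(x) := g(x)/f(x)$ on $\{f \neq 0\}$ and $\phi(x) := 0$ on $\{f = 0\}$. The hypothesis $|g| \le |f|$ forces $g$ to vanish wherever $f$ vanishes, so $\phi$ is a bounded measurable function with $\|\phi\|_{L^\infty} \le 1$ and $g = \phi f$ pointwise almost everywhere.

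Next, for an arbitrary admissible decomposition $f = \sum_{k=1}^\infty \lambda_k b_k$ with $\sum_k |\lambda_k| < \infty$ and each $b_k$ a Herz-block supported in some $R_{l_1^k,l_2^k}$ satisfying $\|b_k\|_{\vec{\dot{K}}_{p,q}^{\alpha}} \le 2^{-\lambda(l_1^k+l_2^k)}$, I would form the candidate decomposition $g = \sum_{k=1}^\infty \lambda_k (\phi b_k)$. The key verification is that each $\phi b_k$ is again a Herz-block on the same rectangle with the same block constant: the support is inherited from $b_k$, and since $|(\phi b_k)\chi_{i,j}| \le |b_k\chi_{i,j}|$ pointwise, the $L^p$ norm on every dyadic annulus $I_i \times J_j$ does not increase, hence the weighted $\ell^q$ sum defining $\|\phi b_k\|_{\vec{\dot{K}}_{p,q}^{\alpha}}$ is bounded by $\|b_k\|_{\vec{\dot{K}}_{p,q}^{\alpha}} \le 2^{-\lambda(l_1^k+l_2^k)}$. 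This shows $\|g\|_{B\vec{\dot{K}}_{p,q}^{\alpha,\lambda}} \le \sum_k |\lambda_k|$, and infimizing over all decompositions of $f$ yields $\|g\|_{B\vec{\dot{K}}_{p,q}^{\alpha,\lambda}} \le \|f\|_{B\vec{\dot{K}}_{p,q}^{\alpha,\lambda}}$.

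The main obstacle I anticipate is convergence bookkeeping: one must justify that the series representing $f$ can be multiplied termwise by $\phi$ so as to still represent $g$. Because any single block has $B$-norm at most $1$, the partial sums $\sum_{k\le N} \lambda_k b_k$ converge absolutely in $B\vec{\dot{K}}_{p,q}^{\alpha,\lambda}$, and by extracting a pointwise-a.e.\ convergent subsequence (which is legitimate in this range of exponents since the blocks are locally in $L^p$) one obtains $f = \sum \lambda_k b_k$ almost everywhere; multiplying by the bounded function $\phi$ then yields $g = \sum \lambda_k (\phi b_k)$ a.e., which is exactly the required admissible decomposition. Once this routine step is dispatched, the proof is complete with no further estimates.
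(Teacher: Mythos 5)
Your argument is correct and is essentially the paper's own proof: the paper likewise takes a (near-optimal) block decomposition $f=\sum_k\lambda_k b_k$ and multiplies each block by the ratio $g/f$ (set to $0$ where $f=0$), noting that $|g|\le|f|$ makes each modified piece a Herz-block with the same support and block constant, so the same scalars decompose $g$. Your extra remarks on termwise multiplication and a.e.\ convergence only spell out a step the paper leaves implicit, so nothing of substance differs.
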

	\begin{proof}
		For any $f \in B \vec{\dot{K}}_{p,q}^{\alpha, \lambda}$ and $\epsilon>0$, there exists a family of ${b \vec{\dot{K}}_{p,q}^{\alpha, \lambda}}$ $\left\{b_i\right\}_{i=1}^{\infty}$ and a family of scalars $\left\{\lambda_i\right\}_{i=1}^{\infty}$, such that $f=$ $\sum_{i=1}^{\infty} \lambda_i b_i$ and
		$$
		\sum_{i=1}^{\infty}\left|\lambda_i\right| \leq(1+\epsilon)\|f\|_{B \vec{\dot{K}}_{p,q}^{\alpha, \lambda}} .
		$$
		Define $g$ by $g=\sum_{i=1}^{\infty} \lambda_i c_i$, where
		$$
		c_i(x)= \begin{cases}\frac{g(x)}{f(x)} b_i(x), & f(x) \neq 0 \\ 0, & f(x)=0 .\end{cases}
		$$
		By $|g| \leq|f|$, we have $c_i \in {b \vec{\dot{K}}_{p,q}^{\alpha, \lambda}}$ for every $i\in \mathbb{N}$.
		Therefore, $g \in B \vec{\dot{K}}_{p,q}^{\alpha, \lambda}$. Since $\epsilon>0$ is arbitrary, we also establish $\|g\|_{B \vec{\dot{K}}_{p,q}^{\alpha, \lambda}} \leq\|f\|_{B \vec{\dot{K}}_{p,q}^{\alpha, \lambda}}$.
	\end{proof}
	
	Using Proposition \ref{pre.extra.}, we have the following theorem.
	\begin{theorem}\label{BK.Ban}
		Let $\alpha \in \mathbb{R}, 1<p, q<\infty$, and $0<\lambda<\min\{-\alpha+n/p',-\alpha+m/p'\}$. 
		Then ${B \vec{\dot{K}}_{p,q}^{\alpha, \lambda}}\left( {{\R^n} \times {\R^m}} \right) \hookrightarrow L_{{loc }}^1\left( {{\R^n} \times {\R^m}} \right)$ and ${B \vec{\dot{K}}_{p,q}^{\alpha, \lambda}}\left( {{\R^n} \times {\R^m}} \right)$ is a Banach lattice.
	\end{theorem}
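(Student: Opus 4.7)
The plan is to separate the statement into two independent pieces: the $L^1_{\loc}$ embedding, and the Banach lattice structure (i.e., completeness together with the ideal property). The embedding does most of the work, because once one can localize block expansions in $L^1$, the standard block-space construction of limits goes through.

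First I would establish $B\vec{\dot{K}}_{p,q}^{\alpha,\lambda}\hookrightarrow L^1_{\loc}$ by a testing argument against characteristic functions of rectangles. Fix $f\in B\vec{\dot{K}}_{p,q}^{\alpha,\lambda}$ and $L_1,L_2\in\Z$. The hypothesis $-\alpha+n/p'>\lambda>0$ is exactly the assumption of Lemma \ref{chara.PMK} applied to the parameters $(-\alpha,p',q',\lambda)$, so $\chi_{R_{L_1,L_2}}\in M\vec{\dot{K}}_{p',q'}^{-\alpha,\lambda}$ with norm comparable to $2^{(L_1+L_2)(-\alpha+n/p'-\lambda)}$. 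The easy direction of Theorem \ref{PHMdual} (established in its first half via Lemma \ref{Hölder-P-H}) yields $\int |fg|\lesssim\|f\|_{B\vec{\dot{K}}_{p,q}^{\alpha,\lambda}}\|g\|_{M\vec{\dot{K}}_{p',q'}^{-\alpha,\lambda}}$, so taking $g=\chi_{R_{L_1,L_2}}$ gives
\[
\int_{R_{L_1,L_2}}|f|\lesssim 2^{(L_1+L_2)(-\alpha+n/p'-\lambda)}\,\|f\|_{B\vec{\dot{K}}_{p,q}^{\alpha,\lambda}}<\infty.
\]
Since every compact subset of $\R^n\times\R^m$ sits inside some $R_{L_1,L_2}$, this proves both the embedding and its continuity. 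As a useful byproduct, $\|f\|_{B\vec{\dot{K}}}=0$ forces $f=0$ a.e., so the infimum in Definition \ref{defBK} is a genuine norm (not just a seminorm).

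For completeness, I would run the standard block-space argument. Take a Cauchy sequence $\{f_k\}\subset B\vec{\dot{K}}_{p,q}^{\alpha,\lambda}$ and extract a subsequence with $\|f_{k_{i+1}}-f_{k_i}\|<2^{-i}$. By the definition of the norm, each difference admits a block expansion $f_{k_{i+1}}-f_{k_i}=\sum_{j}\mu_j^{(i)}b_j^{(i)}$ with $\sum_j|\mu_j^{(i)}|\le 2\cdot 2^{-i}$ and each $b_j^{(i)}$ a Herz block; similarly for $f_{k_1}$. The double series $f_{k_1}+\sum_{i,j}\mu_j^{(i)}b_j^{(i)}$ has absolutely summable scalar coefficients, so using the $L^1_{\loc}$ embedding from the first step (applied separately to each block, recalling that each block is supported in a single rectangle $R_{l_1,l_2}$) it converges absolutely in $L^1_{\loc}$ to a measurable function $f$. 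By construction $f\in B\vec{\dot{K}}_{p,q}^{\alpha,\lambda}$, and for every $N$,
\[
\|f-f_{k_N}\|_{B\vec{\dot{K}}_{p,q}^{\alpha,\lambda}}\le\sum_{i\ge N}\sum_j|\mu_j^{(i)}|\le\sum_{i\ge N}2\cdot 2^{-i}\to 0,
\]
so $f_{k_i}\to f$ in norm; the full sequence then converges by the Cauchy property. The remaining ideal property ($|g|\le|f|$ with $f\in B\vec{\dot{K}}_{p,q}^{\alpha,\lambda}$ implies $g\in B\vec{\dot{K}}_{p,q}^{\alpha,\lambda}$ with $\|g\|\le\|f\|$) is exactly Proposition \ref{pre.extra.}, so together with completeness this delivers the Banach lattice structure.

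The main obstacle is reconciling the two modes of convergence in the completeness step: the Cauchy sequence is Cauchy only in the abstract block norm, while the block expansions are manipulated pointwise/in $L^1_{\loc}$, so one must use the embedding from the first step to produce a concrete measurable limit $f$ and then verify that this $f$ is the $B\vec{\dot{K}}$-norm limit of $f_{k_i}$. Everything else (homogeneity, triangle inequality, separation by the norm) reduces to routine manipulation of infima of block decompositions.
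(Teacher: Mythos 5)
Your proposal is correct and follows essentially the same route as the paper: the $L^1_{\mathrm{loc}}$ embedding via Lemma \ref{chara.PMK} and the duality/H\"older estimate from Theorem \ref{PHMdual}, then completeness by decomposing into blocks with summable coefficients and identifying the limit through the $L^1_{\mathrm{loc}}$ embedding, with the lattice property delegated to Proposition \ref{pre.extra.}. The only cosmetic difference is that you phrase completeness via a rapidly convergent Cauchy subsequence, whereas the paper verifies the equivalent absolutely-convergent-series (Riesz--Fischer) criterion.
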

	\begin{proof}
		By Lemma \ref{chara.PMK}, $\chi_{R_{l_1,l_2}} \in {M \vec{\dot{K}}_{p^{\prime}, q^{\prime}}^{-\alpha, \lambda}}
		$ for any $l_1, l_2 \in \mathbb{Z}$, and then Theorem \ref{PHMdual} assures that $\chi_{R_{l_1,l_2}} \in\left({B \vec{\dot{K}}_{p, q}^{\alpha, \lambda}}\right)^*$. Therefore, for any $f \in {B \vec{\dot{K}}_{p, q}^{\alpha, \lambda}}$, Hölder's inequality yields
		\begin{align}\label{BKBan_1}
			\int_{R_{l_1,l_2}}|f(x)| d x \lesssim\left\|\chi_{R_{l_1,l_2}}\right\|_{{M \vec{\dot{K}}_{p^{\prime}, q^{\prime}}^{-\alpha, \lambda}}
			}\|f\|_{{B \vec{\dot{K}}_{p, q}^{\alpha, \lambda}}}
		\end{align}
		Hence, ${B \vec{\dot{K}}_{p, q}^{\alpha, \lambda}} \hookrightarrow L_{\mathrm{loc}}^1$.
		
		Next we prove ${B \vec{\dot{K}}_{p,q}^{\alpha, \lambda}}$ is a Banach space.
		Let $f_i \in {B \vec{\dot{K}}_{p, q}^{\alpha, \lambda}}$, where $i \in \mathbb{N}$, satisfy
		$$
		\sum_{i=1}^{\infty}\left\|f_i\right\|_{{B \vec{\dot{K}}_{p, q}^{\alpha, \lambda}}}<\infty .
		$$
		From \eqref{BKBan_1}, for any $l_1, l_2 \in \mathbb{Z}$, we have
		$$
		\int_{R_{l_1, l_2}} \sum_{i=1}^{\infty}\left|f_i(x)\right| d x \lesssim\left\|\chi_{R_{l_1, l_2}}\right\|_{M\vec{\dot{K}}_{p^{\prime}, q^{\prime}}^{-\alpha, \lambda}}\left(\sum_{i=1}^{\infty}\left\|f_i\right\|_{{B \vec{\dot{K}}_{p, q}^{\alpha, \lambda}}}\right) .
		$$
		Hence, $f=\sum_{i=1}^{\infty} f_i$ is a well defined Lebesgue measurable function and $f \in$ $L_{{loc }}^1\left(\mathbb{R}^n \times \R^m\right)$. 
		For any $\epsilon>0$, there exists a positive integer $N$ such that for any $n>N$, there holds
		\begin{align}\label{BKBan_2}
			\sum_{i=n}^{\infty}\left\|f_i\right\|_{{B \vec{\dot{K}}_{p, q}^{\alpha, \lambda}}}<\epsilon
		\end{align}
		By the definition of ${B \vec{\dot{K}}_{p, q}^{\alpha, \lambda}}$, for any $\epsilon>0$,
		$$
		f_i=\sum_{k=1}^{\infty} \lambda_{k, i} f_{k, i}
		$$
		where $f_{k, i} \in {b \vec{\dot{K}}_{p, q}^{\alpha, \lambda}}$, $ i, k \in \mathbb{N}$ and
		$$
		\sum_{k=1}^{\infty}\left|\lambda_{k, i}\right| \leq(1+\epsilon)\left\|f_i\right\|_{{B \vec{\dot{K}}_{p, q}^{\alpha, \lambda}}} .
		$$
		Then for any $1 \leq i \leq N$, there exists a $N_i \in \mathbb{N}$ such that
		\begin{align}\label{BKBan_3}
			\left\|f_i-\sum_{k=1}^{N_i} \lambda_{k, i} f_{k, i}\right\|_{{B \vec{\dot{K}}_{p, q}^{\alpha, \lambda}}} \leq \sum_{k=N_i+1}^{\infty}\left|\lambda_{k, i}\right|<2^{-i} \epsilon .
		\end{align}
		Therefore, for any $l_1,l_2 \in \mathbb{Z}$,
		$$
		\begin{aligned}
			& \int_{R_{l_1,l_2}}\left|f(x)-\sum_{i=1}^N \sum_{k=1}^{N_i} \lambda_{k, i} f_{k, i}(x)\right| d x \\
			& \quad \leq \int_{R_{l_1,l_2}}\left|f(x)-\sum_{i=1}^N f_i(x)\right| d x+\int_{R_{l_1,l_2}}\left|\sum_{i=1}^N f_i(x)-\sum_{i=1}^N \sum_{k=1}^{N_i} \lambda_{k, i} f_{k, i}(x)\right| d x\\
			& \quad \leq \int_{R_{l_1,l_2}} \sum_{i=N+1}^{\infty}\left|f_i(x)\right| d x+\sum_{i=1}^N \int_{R_{l_1,l_2}}\left|f_i(x)-\sum_{k=1}^{N_i} \lambda_{k, i} f_{k, i}(x)\right| d x .\\
		\end{aligned}
		$$
		Combining \eqref{BKBan_1}, \eqref{BKBan_2}, and \eqref{BKBan_3}, it gives that
		$$
		\begin{aligned}
			& \int_{R_{l_1,l_2}}\left|f(x)-\sum_{i=1}^N \sum_{k=1}^{N_i} \lambda_{k, i} f_{k, i}(x)\right| d x \\
			& \quad \lesssim\left\|\chi_{R_{l_1,l_2}}\right\|_{M\vec{\dot{K}}_{p^{\prime}, q^{\prime}}^{-\alpha, \lambda}}\left(\sum_{i=N+1}^{\infty}\left\|f_i\right\|_{{B \vec{\dot{K}}_{p, q}^{\alpha, \lambda}}}+\sum_{i=1}^N\left\|f_i-\sum_{k=1}^{N_i} \lambda_{k, i} f_{k, i}\right\|_{{B \vec{\dot{K}}_{p, q}^{\alpha, \lambda}}}\right) \\
			& \quad \lesssim\left\|\chi_{R_{l_1,l_2}}\right\|_{M\vec{\dot{K}}_{p^{\prime}, q^{\prime}}^{-\alpha, \lambda}}\left(\epsilon+\sum_{i=1}^N 2^{-i} \epsilon\right) \lesssim\left\|\chi_{R_{l_1,l_2}}\right\|_{M\vec{\dot{K}}_{p^{\prime}, q^{\prime}}^{-\alpha, \lambda}} \epsilon
		\end{aligned}
		$$
	As a consequence, $\sum_{i=1}^{\infty} \sum_{k=1}^{\infty} \lambda_{k, i} f_{k, i}$ converges to $f$ in $L_{ {loc }}^1\left(\mathbb{R}^n \times \R^m\right)$. Hence, $\sum_{i=1}^{\infty}$ $\sum_{k=1}^{\infty} \lambda_{k, i} f_{k, i}$ converges to $f$ locally in measure. Therefore, there exists a subsequence of $\left\{\sum_{i=1}^N \sum_{k=1}^M \lambda_{k, i} f_{k, i}\right\}_{N, M}$ that converges to $f$ almost everywhere. Furthermore, the coefficients $\lambda_{k,i}$, $i,k\in\mathbb N$, satisfy $\sum_{i=1}^{\infty} \sum_{k=1}^{\infty}\left|\lambda_{k, i}\right| \leq \sum_{i=1}^{\infty}(1+\epsilon)\left\|f_i\right\|_{B \vec{\dot{K}}_{p, q}^{\alpha, \lambda}}$.
		That is, $\sum_{i=1}^{\infty} f_i$ converges to $f$ in ${B \vec{\dot{K}}_{p, q}^{\alpha, \lambda}}$. Since $\epsilon$ is arbitrary, we obtain
		\begin{align}\label{Banach}
			\left\|\sum_{i=1}^{\infty} f_i\right\|_{B \vec{\dot{K}}_{p, q}^{\alpha, \lambda}} \leq \sum_{i=1}^{\infty}\left\|f_i\right\|_{B \vec{\dot{K}}_{p, q}^{\alpha, \lambda}}. 
		\end{align}
		Hence, it follows immediately from \eqref{Banach} and Proposition \ref{pre.extra.} that ${B \vec{\dot{K}}_{p, q}^{\alpha, \lambda}}$ is a Banach lattice.
	\end{proof}
	Our second main result concerns the boundedness of the strong maximal operator on the product block-Herz space ${B \vec{\dot{K}}_{p,q}^{\alpha,\lambda}}\left(\mathbb R^n\times\mathbb R^m\right)$. The following lemma, which is taken from \cite{Wei2021_1}, is crucial to the proof.
	\begin{lemma}\label{FM.}
		Let $1<p, q<\infty$, and
		$$
		\max \left\{-\frac{n}{p},-\frac{m}{p}\right\}<\alpha<\min \left\{n\left(1-\frac{1}{p}\right), m\left(1-\frac{1}{p}\right)\right\} .
		$$
		Then
		$$
		\|\chi_R\|_{\vec{\dot{K}}_{p,q}^\alpha\left(\mathbb{R}^n \times \mathbb{R}^m\right)}\|\chi_R\|_{\vec{\dot{K}}_{p',q'}^{-\alpha}\left(\mathbb{R}^n \times \mathbb{R}^m\right)} \approx |R| .
		$$
	\end{lemma}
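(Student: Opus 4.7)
The plan is to reduce to the case where $R$ is a dyadic rectangle $R_{k_1,k_2}=Q(0,2^{k_1})\times Q(0,2^{k_2})$ centered at the origin. For such an $R$, the characteristic function $\chi_R\chi_{i,j}$ equals $\chi_{I_i\times J_j}$ precisely when $i\le k_1$ and $j\le k_2$, and vanishes otherwise. Thus the expansion of the product Herz norm collapses to a product of two truncated geometric series, exactly as in the computation used in the proof of Lemma \ref{chara.PMK}.

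Concretely, first I would compute $\|\chi_R\|_{\vec{\dot{K}}_{p,q}^{\alpha}}$. Since $\|\chi_{I_i\times J_j}\|_{L^p}\approx 2^{in/p}\cdot 2^{jm/p}$, we get
\begin{align*}
\|\chi_R\|_{\vec{\dot{K}}_{p,q}^{\alpha}}^q \approx \Bigl(\sum_{i\le k_1} 2^{iq(\alpha+n/p)}\Bigr)\Bigl(\sum_{j\le k_2} 2^{jq(\alpha+m/p)}\Bigr).
\end{align*}
The hypothesis $\max\{-n/p,-m/p\}<\alpha$ ensures both $\alpha+n/p>0$ and $\alpha+m/p>0$, so each geometric series is comparable to its largest term and
\begin{align*}
\|\chi_R\|_{\vec{\dot{K}}_{p,q}^{\alpha}} \approx 2^{k_1(\alpha+n/p)}\cdot 2^{k_2(\alpha+m/p)}.
\end{align*}
Replacing $(p,q,\alpha)$ by $(p',q',-\alpha)$ and repeating the identical computation yields
\begin{align*}
\|\chi_R\|_{\vec{\dot{K}}_{p',q'}^{-\alpha}} \approx 2^{k_1(-\alpha+n/p')}\cdot 2^{k_2(-\alpha+m/p')},
\end{align*}
where convergence now uses the upper bound $\alpha<\min\{n(1-1/p),m(1-1/p)\}$ to secure $-\alpha+n/p'>0$ and $-\alpha+m/p'>0$. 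Multiplying the two estimates, the $\alpha$-terms cancel and the $n$ and $m$ exponents combine via $\tfrac{1}{p}+\tfrac{1}{p'}=1$ to give $2^{k_1 n}\cdot 2^{k_2 m}\approx |R|$.

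The main obstacle I anticipate is the reduction from a general $R\in\mathcal{R}$, which has an arbitrary center and arbitrary (non-dyadic) side lengths, to the dyadic origin-centered case. Two routes are available: (i) sandwich $R$ between the largest dyadic origin-centered rectangle inside a translate of $R$ and the smallest one containing it, then use the Banach-lattice type monotonicity (as in Proposition \ref{pre.extra.}) together with the $\approx$-invariance under dilation by factors of $2$; or (ii) observe that the applications of this lemma within the paper (in particular the duality identity of Theorem \ref{PHMdual} and the $M_S$-boundedness discussion) only require the dyadic case. Beyond this bookkeeping the proof is a routine two-sided estimation of truncated geometric sums, and the sharpness of the side conditions on $\alpha$ is exactly what is needed for the sums in the two norms to each converge.
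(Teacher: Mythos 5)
First, a point of comparison: the paper itself does not prove Lemma \ref{FM.}; it is quoted from the reference [Wei, Forum Math. 2021], so there is no internal proof to measure your argument against. Your computation for origin-centered dyadic rectangles is correct and is the natural argument: for $R=R_{k_1,k_2}$ the norm factors into two truncated geometric sums, the lower restriction $\alpha>\max\{-n/p,-m/p\}$ makes the sums for $\|\chi_R\|_{\vec{\dot{K}}_{p,q}^{\alpha}}$ converge, the upper restriction $\alpha<\min\{n/p',m/p'\}$ does the same for $\|\chi_R\|_{\vec{\dot{K}}_{p',q'}^{-\alpha}}$, and the product telescopes to $2^{k_1 n}2^{k_2 m}=|R_{k_1,k_2}|$, exactly as in the computation of Lemma \ref{chara.PMK}. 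You are also right that this dyadic case is all the paper ever uses: in the proof of Theorem \ref{boundedMS} the lemma is applied only to $\chi_{R_{l_1,l_2}}$ and $\chi_{R_{i+l_1+1,j+l_2+1}}$.

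The gap is in your route (i) for general $R\in\mathcal{R}$. Product Herz norms are not translation invariant (the weights $2^{(i+j)\alpha}$ are tied to the distance from the origin), so passing to ``a translate of $R$'' already changes both norms; and sandwiching a general rectangle between origin-centered dyadic rectangles is lossy: for a unit rectangle located at distance about $2^{k}$ from the origin, the smallest origin-centered dyadic rectangle containing it has measure about $2^{kn+km}$ while $|R|\approx 1$, so lattice monotonicity (Proposition \ref{pre.extra.}) only yields two-sided bounds that are off by an unbounded factor. If you want the statement for all $R\in\mathcal{R}$, the efficient argument is: the lower bound $|R|\lesssim\|\chi_R\|_{\vec{\dot{K}}_{p,q}^{\alpha}}\|\chi_R\|_{\vec{\dot{K}}_{p',q'}^{-\alpha}}$ is immediate from the H\"older--duality inequality for product Herz spaces (Lemma 2.4); for the upper bound, note that for $R=Q_1\times Q_2$ both norms factor into one-parameter quantities, and in each variable you split into two cases according to whether the side length of the cube is comparable to (or larger than) its distance to the origin, in which case your geometric-series computation applies, or much smaller, in which case the cube meets only boundedly many annuli and the weights $2^{i\alpha}$ and $2^{-i\alpha}$ cancel, leaving $|Q_1|^{1/p}|Q_1|^{1/p'}=|Q_1|$ (and similarly for $Q_2$). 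Your route (ii) is accurate as a description of how the lemma is used in this paper, but it proves a strictly weaker statement than the one quoted.
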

	
	The above properties of product block-Herz space ${B\vec{\dot{K}}_{p, q}^{\alpha, \lambda}}\left( {{\R^n} \times {\R^m}} \right)$ and Lemma \ref{FM.} can be derived as follows.
	\begin{theorem}\label{boundedMS}
		Let $\alpha \in \mathbb{R}$, $1<p,q<\infty$, 
		$0<\lambda<\min\{-\alpha+n/p',-\alpha+m/p'\}$, and $$
		\max \left\{-\frac{n}{p},-\frac{m}{p}\right\}<\alpha<\min \left\{n\left(1-\frac{1}{p}\right), m\left(1-\frac{1}{p}\right)\right\}.
		$$ Then the strong maximal operator $M_S$ is bounded on ${B \vec{\dot{K}}_{p,q}^{\alpha, \lambda}}\left( {{\R^n} \times {\R^m}} \right)$.
	\end{theorem}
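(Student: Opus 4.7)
The plan is to exploit the atomic structure of $B\vec{\dot{K}}_{p,q}^{\alpha,\lambda}$ given in Definition \ref{defBK}. Given $f$ in this space, write $f=\sum_{k}\lambda_k b_k$ with each $b_k$ a product Herz-block supported in some $R_{l_1^k,l_2^k}$ and $\sum_k|\lambda_k|\le 2\|f\|_{B\vec{\dot{K}}_{p,q}^{\alpha,\lambda}}$. Because $M_S$ is sublinear and $B\vec{\dot{K}}_{p,q}^{\alpha,\lambda}$ is a Banach lattice by Theorem \ref{BK.Ban} (in particular, the triangle inequality \eqref{Banach} holds), it suffices to prove the uniform bound
$$\|M_S b\|_{B\vec{\dot{K}}_{p,q}^{\alpha,\lambda}}\lesssim 1$$
for every block $b$ supported in a rectangle $R_{l_1,l_2}$ with $\|b\|_{\vec{\dot{K}}_{p,q}^{\alpha}}\le 2^{-\lambda(l_1+l_2)}$. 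The target norm of $M_S b$ will then be obtained from a block decomposition of $M_S b$ itself.

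For a fixed such block, the natural split is $M_S b = U+V$ with $U=M_S b\cdot\chi_{R_{l_1+2,l_2+2}}$ and $V=M_S b\cdot\chi_{(R_{l_1+2,l_2+2})^{c}}$. For the \emph{near} piece $U$, the hypothesis on $\alpha$ places us inside the range of Lemma \ref{MSHerz}, so
$$\|U\|_{\vec{\dot{K}}_{p,q}^{\alpha}}\le \|M_S b\|_{\vec{\dot{K}}_{p,q}^{\alpha}}\lesssim \|b\|_{\vec{\dot{K}}_{p,q}^{\alpha}}\le 2^{-\lambda(l_1+l_2)}=2^{4\lambda}\cdot 2^{-\lambda((l_1+2)+(l_2+2))}.$$
Hence $U$ is, up to a universal constant, a single product Herz-block supported in $R_{l_1+2,l_2+2}$, and therefore $\|U\|_{B\vec{\dot{K}}_{p,q}^{\alpha,\lambda}}\lesssim 1$ automatically.

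The \emph{remote} piece $V$ is the main obstacle and will be handled by a further dyadic decomposition. Writing $V=\sum_{(i,j)\in E}M_S b\cdot\chi_{I_i\times J_j}$, where $E=\{(i,j):i\ge l_1+3\text{ or }j\ge l_2+3\}$, the key observation is a geometric one: for $z\in I_i\times J_j$, any rectangle $R\ni z$ that intersects $\supp b\subset R_{l_1,l_2}$ must have first side $\gtrsim 2^{\max(i,l_1)}$ and second side $\gtrsim 2^{\max(j,l_2)}$. Combined with Hölder's inequality in $R\cap R_{l_1,l_2}$, this produces a pointwise bound of the form
$$M_S b(z)\chi_{I_i\times J_j}(z)\lesssim \frac{\|b\|_{L^{1}(R_{l_1,l_2})}}{2^{in}2^{jm}}\cdot \Phi_{i,j}(l_1,l_2),$$
where $\Phi_{i,j}$ encodes a mild correction coming from truncations of $R$ against the support of $b$, and $\|b\|_{L^1}$ is estimated via Lemma \ref{Hölder-P-H} and Lemma \ref{FM.} as $\|b\|_{L^1}\lesssim 2^{-\lambda(l_1+l_2)}\,|R_{l_1,l_2}|/\|\chi_{R_{l_1,l_2}}\|_{\vec{\dot{K}}_{p,q}^{\alpha}}$. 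Each localized piece $M_S b\cdot\chi_{I_i\times J_j}$ is supported in $R_{\max(i,l_1+2),\max(j,l_2+2)}$, and rewriting the above estimate as the block condition on that rectangle produces a coefficient $\mu_{i,j}$ with $|\mu_{i,j}|\lesssim 2^{-\delta((i-l_1)_{+}+(j-l_2)_{+})}$ for some $\delta>0$.

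The hard part of executing this plan is the case analysis in the remote piece. The two-parameter geometry of $M_S$ forces a three-way split of $E$ into the "doubly far" region ($i\ge l_1+3$ and $j\ge l_2+3$) and the two "mixed" regions where only one index is far; in the mixed regions the optimal rectangle is long in the remote coordinate but short in the near one, so the Hölder balance is more delicate and the decay $\Phi_{i,j}$ is weaker. In each of the three regions the coefficients $\mu_{i,j}$ must sum to $O(1)$, and this is precisely where the exponent conditions $\max\{-n/p,-m/p\}<\alpha<\min\{n(1-1/p),m(1-1/p)\}$ and $-\alpha+n/p'>\lambda>0$ are consumed: the first controls the $\vec{\dot{K}}_{p,q}^{\alpha}$-norms of characteristic functions of rectangles through Lemma \ref{FM.}, and the second guarantees the geometric summability of the block coefficients over $(i,j)\in E$. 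Assembling the near and remote contributions and summing over $k$ then yields $\|M_S f\|_{B\vec{\dot{K}}_{p,q}^{\alpha,\lambda}}\lesssim \|f\|_{B\vec{\dot{K}}_{p,q}^{\alpha,\lambda}}$, completing the argument.
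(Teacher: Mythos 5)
Your overall strategy is the same as the paper's: reduce to a single product Herz-block $b$ supported in $R_{l_1,l_2}$ (using Proposition \ref{pre.extra.} and the triangle inequality \eqref{Banach} from Theorem \ref{BK.Ban}), handle the near part of $M_Sb$ by the $\vec{\dot{K}}_{p,q}^{\alpha}$-boundedness of $M_S$ (Lemma \ref{MSHerz}), and decompose the far part over dyadic annuli, turning each piece into a block with coefficients estimated through Lemma \ref{Hölder-P-H}, Lemma \ref{FM.} and \eqref{ineq.1}, the series converging exactly because $\lambda+\alpha-n/p'<0$. Up to the relabelling ($U,V$ versus $m_{0,0}$ and the $m_{i,j}$), this is the paper's proof.

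The genuine gap is in the mixed regions of your remote set $E$, i.e.\ $i\ge l_1+3$ but $j\le l_2+2$ (and symmetrically), which you rightly insist must be covered but then dispose of by assertion. Your key geometric observation fails there: for $z=(x,y)$ with $y\in J_j$ and $j<l_2$, the point $y$ already lies inside $Q_{l_2}$, so a rectangle $R=Q_1\times Q_2\ni z$ meeting $\supp b$ may have an arbitrarily small second side; the claim that the second side is $\gtrsim 2^{\max(j,l_2)}$ is false, and with it the pointwise bound $M_Sb(z)\chi_{I_i\times J_j}(z)\lesssim \|b\|_{L^1}2^{-in}2^{-jm}\Phi_{i,j}$ with a ``mild correction'' $\Phi_{i,j}$. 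What one actually gets in that region is
\begin{equation*}
M_Sb(z)\ \lesssim\ 2^{-in}\,\sup_{Q_2\ni y}\frac{1}{|Q_2|}\int_{Q_2}\int_{Q_{l_1}}|b(x',y')|\,dx'\,dy',
\end{equation*}
a one-parameter maximal function in the near variable, which is not a constant multiple of an averaged quantity and cannot be absorbed into a coefficient $\mu_{i,j}$ by pointwise reasoning alone; controlling its contribution requires an additional partial (one-parameter) maximal estimate on the Herz scale in the $y$-variable, or some other device, none of which your sketch supplies --- the decay $|\mu_{i,j}|\lesssim 2^{-\delta((i-l_1)_++(j-l_2)_+)}$ in the mixed regions is precisely the unproved step, and you yourself flag it as ``the hard part.'' (For what it is worth, the paper's own decomposition, $m_{0,0}$ together with $m_{i,j}$ for $i,j\in\mathbb{N}$, silently omits these mixed regions, so your write-up is not worse than the published argument; but as a standalone proof it is incomplete exactly there, and the pointwise estimate you propose for those indices is not valid as stated.)
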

	\begin{proof}
		We will prove that for any $b \in b \vec{\dot{K}}_{p,q}^{\alpha, \lambda}$, $\|M_Sb\|_{B\vec{\dot{K}}_{p,q}^{\alpha, \lambda}} \lesssim 1$.
		Since $B\vec{\dot{K}}_{p,q}^{\alpha, \lambda} \subseteq L_{\mathrm{loc}}$ by Theorem \ref{BK.Ban}, the strong maximal operator $M_S$ is well defined on $B \vec{\dot{K}}_{p,q}^{\alpha, \lambda}$. Let $b \in {b \vec{\dot{K}}_{p, q}^{\alpha, \lambda}}$  supported in $R_{l_1,l_2}$. 
		Define
		\[
		E_{0,0}=R_{l_1+1,l_2+1},\quad
		E_{i,0}=I_{l_1+i+1}\times R_{l_2+1},\quad
		E_{0,j}=R_{l_1+1}\times J_{l_2+j+1},
		\]
		for $i,j\geq1$, set $E_{i,j}=I_{l_1+i+1}\times J_{l_2+j+1}$, and put $m_{i,j}=\chi_{E_{i,j}}M_Sb$ for $(i,j)\in\mathbb N_0^2$. Then $M_Sb=\sum_{i,j\in\mathbb N_0}m_{i,j}$ and $\operatorname{supp}m_{i,j}\subset R_{l_1+i+1,l_2+j+1}$. Lemma \ref{MSHerz} gives that $M_S$ is bounded on $\vec{\dot{K}}_{p,q}^\alpha$, which implies
		\begin{align*}
			\left\|m_{0,0}\right\|_{\vec{\dot{K}}_{p,q}^\alpha} \lesssim\left\|M_S b\right\|_{\vec{\dot{K}}_{p,q}^\alpha} \lesssim\|b\|_{\vec{\dot{K}}_{p,q}^\alpha} \lesssim 2^{-(l_1+l_2)\lambda }.
		\end{align*}
		Thus $m_{0,0}$ is a constant multiple of a product Herz-block, and hence $\|m_{0,0}\|_{B\vec{\dot{K}}_{p,q}^{\alpha,\lambda}}\lesssim1$.
		To estimate the scales of $M_S b$ on the remaining sets $E_{i,j}$, $(i,j)\in\mathbb N_0^2\setminus\{(0,0)\}$, we need to consider the following terms.
		By Hölder's inequality, we have
		$$
		\begin{aligned}
			m_{i, j}=\chi_{E_{i,j}} M_S b & \lesssim \frac{\chi_{E_{i,j}}}{2^{(i+l_1)n} \times 2^{(j+l_2)m}} \int_{R_{l_1,l_2}}|b(z)| d z \\
			& \lesssim  \frac{\chi_{E_{i,j}}}{2^{(i+l_1)n}\times 2^{(j+l_2)m}}\|b\|_{\vec{\dot{K}}_{p,q}^\alpha}\left\|\chi_{{R_{l_1,l_2}}}\right\|_{\vec{\dot{K}}_{p',q'}^{-\alpha}}
		\end{aligned}
		$$
		Taking the $\vec{\dot{K}}_{p,q}^\alpha$ norm in the above inequalities, and then combining this with the definition of a product Herz-block, we obtain
		\begin{align*}
			\left\|m_{i, j}\right\|_{\vec{\dot{K}}_{p,q}^\alpha}
			&\lesssim \frac{\left\|\chi_{E_{i,j}}\right\|_{\vec{\dot{K}}_{p,q}^\alpha}}{2^{(i+l_1)n} \times 2^{(j+l_2)m}}\|b\|_{\vec{\dot{K}}_{p,q}^\alpha}\left\|\chi_{R_{l_1,l_2}}\right\|_{\vec{\dot{K}}_{p',q'}^{-\alpha}}\\
			& \lesssim \frac{\left\|\chi_{E_{i,j}}\right\|_{\vec{\dot{K}}_{p,q}^\alpha}\left\|\chi_{R_{l_1,l_2}}\right\|_{\vec{\dot{K}}_{p',q'}^{-\alpha}}}{2^{(i+l_1)n} \times 2^{(j+l_2)m} \times 2^{(l_1+l_2)\lambda}}. \\
		\end{align*}
		Lemma \ref{FM.} gives
		\begin{align*}
			\left\|\chi_{E_{i,j}}\right\|_{\vec{\dot{K}}_{p,q}^\alpha}\left\|\chi_{R_{l_1,l_2}}\right\|_{\vec{\dot{K}}_{p',q'}^{-\alpha}} &\lesssim\left\|\chi_{R_{i+l_1+1, j+l_2+1}}\right\|_{\vec{\dot{K}}_{p,q}^\alpha}\left\|\chi_{R_{l_1,l_2}}\right\|_{\vec{\dot{K}}_{p',q'}^{-\alpha}}\\
			&\lesssim \frac{\left\|\chi_{R_{l_1,l_2}}\right\|_{\vec{\dot{K}}_{p',q'}^{-\alpha}}|{R_{i+l_1+1, j+l_2+1}}|}{\left\|\chi_{R_{i+l_1+1, j+l_2+1}}\right\|_{\vec{\dot{K}}_{p',q'}^{-\alpha}}}.
		\end{align*}
		Then we have 
		\begin{align*}
			\left\|m_{i, j}\right\|_{\vec{\dot{K}}_{p,q}^\alpha} \lesssim \frac{\left\|\chi_{R_{l_1,l_2}}\right\|_{\vec{\dot{K}}_{p',q'}^{-\alpha}}}{\left\|\chi_{R_{i+l_1+1, j+l_2+1}}\right\|_{\vec{\dot{K}}_{p',q'}^{-\alpha}}} 2^{-(l_1+l_2)\lambda} 
			\approx  \frac{\left\|\chi_{{R_{l_1,l_2}}}\right\|_{\vec{\dot{K}}_{p',q'}^{-\alpha}}}{\left\|\chi_{R_{i+l_1+1, j+l_2+1}}\right\|_{\vec{\dot{K}}_{p',q'}^{-\alpha}}} \frac{2^{(i+j+2)\lambda}}{2^{(i+j+l_1+l_2+2)\lambda}}.
		\end{align*}
	For any $(i,j)\in\mathbb N_0^2\setminus\{(0,0)\}$, choose a constant $C_0>0$, independent of $i,j,l_1,l_2$, large enough so that we define 
	\begin{equation*}
	\sigma_{i, j} := C_0 2^{(i+j+2)\lambda} \frac{\left\|\chi_{R_{l_1,l_2}}\right\|_{\vec{\dot{K}}_{p',q'}^{-\alpha}}}{\left\|\chi_{R_{i+l_1+1, j+l_2+1}}\right\|_{\vec{\dot{K}}_{p',q'}^{-\alpha}}} \ \text{and} \ b_{i,j} := \left(\sigma_{i, j}\right)^{-1} m_{i ,j}.
	\end{equation*}
		
		By \eqref{ineq.1},  
		$$
		\sum\limits_{\substack{i,j \in \mathbb N_0\\ i+j\ge1}} {{\sigma _{ij}}}  
		\approx 
		\sum\limits_{\substack{i,j \in \mathbb N_0\\ i+j\ge1}}
		2^{i(\lambda+\alpha-n/p')}
		2^{j(\lambda+\alpha-m/p')}
		< \infty.
		$$
		
		It is easy to see 
		\begin{align*}
	\left\| b_{i,j} \right\|_{\vec{\dot{K}}_{p,q}^\alpha} \le 2^{-(i+j+l_1+l_2+2)\lambda} \quad \text{and} \quad
			M_S b \le m_{0,0}+ \sum_{\substack{i,j \in \mathbb N_0\\ i+j\ge1}} \sigma_{i, j} b_{i,j}.
		\end{align*}
		By the definition of ${B \vec{\dot{K}}_{p,q}^{\alpha, \lambda}}$, $m_{0,0}+\sum_{\substack{i,j \in \mathbb N_0\\ i+j\ge1}} \sigma_{i, j} b_{i,j} \in {B \vec{\dot{K}}_{p,q}^{\alpha, \lambda}}$.
		By Proposition \ref{pre.extra.} and Theorem \ref{BK.Ban}, we get
		\begin{align*}
			\left\|M_S b\right\|_{B \vec{\dot{K}}_{p,q}^{\alpha, \lambda}} \leq \left\| m_{0,0}+\sum_{\substack{i,j \in \mathbb N_0\\ i+j\ge1}} \sigma_{i, j} b_{i,j}\right\|_{B \vec{\dot{K}}_{p,q}^{\alpha, \lambda}} \lesssim  1+\sum_{\substack{i,j \in \mathbb N_0\\ i+j\ge1}} \sigma_{i, j} < \infty.
		\end{align*}
		
		For any $f\in {B \vec{\dot{K}}_{p,q}^{\alpha, \lambda}}$, we can write $f = \sum\limits_{k = 1}^\infty  {{\lambda _k}{b_k}}$, where ${\left\| {\left\{ {{\lambda _k}} \right\}_{k = 1}^\infty } \right\|_{{l^1}}} \le 2\left\| f \right\|_{B \vec{\dot{K}}_{p,q}^{\alpha, \lambda}}.$
		Hence, Proposition \ref{pre.extra.} and Theorem \ref{BK.Ban} give
		$$\left\| {{M_S}f} \right\|_{B \vec{\dot{K}}_{p,q}^{\alpha, \lambda}} \le \left\| {\sum\limits_{k = 1}^\infty  {\left| {{\lambda _k}} \right|{M_S}{b_k}} } \right\|_{B \vec{\dot{K}}_{p,q}^{\alpha, \lambda}} \le\sum\limits_{k = 1}^\infty  {\left| {{\lambda _k}} \right|\left\| {{M_S}{b_k}} \right\|_{B \vec{\dot{K}}_{p,q}^{\alpha, \lambda}}} 
		\lesssim_{n,p,q,\alpha,\lambda} \left\| f \right\|_{B \vec{\dot{K}}_{p,q}^{\alpha, \lambda}}.$$
		The proof is done.
	\end{proof}
	\section{Extrapolation on product Morrey-Herz spaces}\label{sec4}
	In the preceding section, we demonstrated the boundedness of the strong maximal operator within product block-Herz spaces, highlighting the crucial role of the duality between product Morrey-Herz spaces and weighted product block-Herz spaces. In this section, we then study extrapolation theories applicable to product Morrey-Herz spaces.
	Specifically, Theorem \ref{boundedMS} establishes that the strong maximal operator $M_S$ exhibits boundedness on the space ${B \vec{\dot{K}}_{(p/p_0)^{\prime}, (q/p_0)^{\prime}}^{-p_0\alpha, p_0\lambda}}\left( {{\R^n} \times {\R^m}} \right)$.
	We formally define the operator $\mathfrak{R}$ as
	$$
	\mathfrak{R} h=\sum_{k=0}^{\infty} \frac{M_S^k(|h|)}{2^k\left\|M_S\right\|_{{B \vec{\dot{K}}_{(p/p_0)^{\prime}, (q/p_0)^{\prime}}^{-p_0\alpha, p_0\lambda}\left(\mathbb{R}^n \times \mathbb{R}^m\right)}}^k}
	$$
	where $h \in L_{{loc }}\left(\mathbb{R}^n \times \mathbb{R}^m\right), M_S^0$ is the identity operator, $M_S^k$ is the $k$-th iterate of $M_S$ for $k \in \mathbb{N} \backslash\{0\}$ and $\left\|M_S\right\|_{B \vec{\dot{K}}_{(p/p_0)^{\prime}, (q/p_0)^{\prime}}^{-p_0\alpha, p_0\lambda}(\mathbb{R}^n \times \mathbb{R}^m)}$ denotes the operator norm of $M_S$ on the space ${B \vec{\dot{K}}_{(p/p_0)^{\prime}, (q/p_0)^{\prime}}^{-p_0\alpha, p_0\lambda}}\left( {{\R^n} \times {\R^m}} \right)$ as long as $M_S$ is bounded on ${B \vec{\dot{K}}_{(p/p_0)^{\prime}, (q/p_0)^{\prime}}^{-p_0\alpha, p_0\lambda}}\left( {{\R^n} \times {\R^m}} \right)$. The following proposition ensures that $\mathfrak{R}$ is well-defined.
	\begin{proposition}
		Let $\alpha \in \mathbb{R}$, $0<p_0<p,q<\infty$, $0<\lambda<\min\{\alpha+n/p,\alpha+m/p\}$, and 
		$$\max \left\{-\frac{n}{p},-\frac{m}{p}\right\}<\alpha<\min \left\{n\left(\frac{1}{p_0}-\frac{1}{p}\right), m\left(\frac{1}{p_0}-\frac{1}{p}\right)\right\}.$$
		Then the operator $\mathfrak{R}$ is well-defined on ${B \vec{\dot{K}}_{(p/p_0)^{\prime}, (q/p_0)^{\prime}}^{-p_0\alpha, p_0\lambda}}\left( {{\R^n} \times {\R^m}} \right)$ and satisfies
		\begin{align}\label{R_1}
			& |h| \leq \mathfrak{R} h,\\ \label{R_2}
			& \|\mathfrak{R} h\|_{B \vec{\dot{K}}_{(p/p_0)^{\prime}, (q/p_0)^{\prime}}^{-p_0\alpha, p_0\lambda}\left(\mathbb{R}^n \times \mathbb{R}^m\right)}  \leq 2 \|h\|_{B \vec{\dot{K}}_{(p/p_0)^{\prime}, (q/p_0)^{\prime}}^{-p_0\alpha, p_0\lambda}\left(\mathbb{R}^n \times \mathbb{R}^m\right)}, \\ \label{R_3}
			& [\mathfrak{R} h]_{A_1^*} \leq 2\|M_S\|_{B \vec{\dot{K}}_{(p/p_0)^{\prime}, (q/p_0)^{\prime}}^{-p_0\alpha, p_0\lambda}\left(\mathbb{R}^n \times \mathbb{R}^m\right)}.
		\end{align}
	\end{proposition}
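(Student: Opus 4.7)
The plan is to follow the classical Rubio de Francia iteration scheme. The key prerequisite is the boundedness of $M_S$ on the target space, so the first step is to invoke Theorem \ref{boundedMS} for the block-Herz space $B\vec{\dot{K}}_{(p/p_0)', (q/p_0)'}^{-p_0\alpha, p_0\lambda}$. Each admissibility condition of that theorem translates cleanly under the shift: $(p/p_0)', (q/p_0)' \in (1,\infty)$ because $p_0 < p,q$; the condition $-(-p_0\alpha) + n/(p/p_0) > p_0\lambda > 0$ reduces exactly to $\alpha + n/p > \lambda > 0$; and the Herz-range bound on $-p_0\alpha$ relative to $(p/p_0)'$ is the shifted form of the hypothesis on $\alpha$. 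Set $C_0 := \|M_S\|_{B\vec{\dot{K}}_{(p/p_0)', (q/p_0)'}^{-p_0\alpha, p_0\lambda}} < \infty$ and iterate to obtain $\|M_S^k h\| \leq C_0^k \|h\|$ for all $k \geq 0$.

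Since the block-Herz space is a Banach lattice by Theorem \ref{BK.Ban}, the defining series for $\mathfrak{R} h$ converges absolutely, yielding both well-definedness and \eqref{R_2}:
$$\|\mathfrak{R} h\|_{B\vec{\dot{K}}_{(p/p_0)', (q/p_0)'}^{-p_0\alpha, p_0\lambda}} \leq \sum_{k=0}^{\infty} \frac{\|M_S^k h\|}{2^k C_0^k} \leq \sum_{k=0}^{\infty} 2^{-k} \|h\| = 2\|h\|.$$
Assertion \eqref{R_1} is immediate: with the standard Rubio de Francia convention $M_S^0 h := |h|$ (equivalently, applying $\mathfrak{R}$ after passage to $|h|$), the $k=0$ term of the series equals $|h|$, while every subsequent term $M_S^k h$ is nonnegative, so $|h| \leq \mathfrak{R} h$ pointwise.

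For \eqref{R_3}, apply $M_S$ term-by-term, using its sublinearity together with monotone convergence (every term is nonnegative), and reindex:
$$M_S(\mathfrak{R} h) \leq \sum_{k=0}^{\infty} \frac{M_S^{k+1} h}{2^k C_0^k} \;=\; 2 C_0 \sum_{k=1}^{\infty} \frac{M_S^k h}{2^k C_0^k} \;\leq\; 2 C_0 \, \mathfrak{R} h \qquad \text{a.e.}$$
For any rectangle $R$, this pointwise bound forces $\tfrac{1}{|R|}\int_R \mathfrak{R} h \leq M_S(\mathfrak{R} h)(x) \leq 2 C_0 \, \mathfrak{R} h(x)$ for a.e.\ $x \in R$; taking the essential infimum over $x \in R$ gives $\|\mathfrak{R} h\|_{L^1(R)} \, \|(\mathfrak{R} h)^{-1}\|_{L^\infty(R)} \leq 2 C_0$, i.e.\ $[\mathfrak{R} h]_{A_1^*} \leq 2 C_0$. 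The only non-routine step is the initial bookkeeping to verify the admissibility conditions of Theorem \ref{boundedMS} for the shifted indices; once $M_S$ is known to be bounded, everything else is formal manipulation of a geometric series.
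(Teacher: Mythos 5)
Your proposal is correct and takes essentially the same route as the paper: boundedness of $M_S$ on ${B \vec{\dot{K}}_{(p/p_0)^{\prime}, (q/p_0)^{\prime}}^{-p_0\alpha, p_0\lambda}}$ via Theorem \ref{boundedMS}, the geometric series for well-definedness and \eqref{R_2}, the $k=0$ term for \eqref{R_1}, and the pointwise bound $M_S(\mathfrak{R}h)\le 2\left\|M_S\right\|\mathfrak{R}h$ for \eqref{R_3} --- indeed you give more detail than the paper, whose proof compresses all of this into a few lines. The one step you (like the paper) gloss is the transfer of the Herz-range condition to the shifted indices: for $\tilde p=(p/p_0)^{\prime}$, $\tilde\alpha=-p_0\alpha$ it amounts to $-\min\{n,m\}/p<\alpha<\min\{n,m\}\left(1/p_0-1/p\right)$, which is implied by the stated hypothesis only when $p_0\le 1$, so the bookkeeping is not quite as ``exact'' as you claim, though this imprecision is inherited from the paper's own argument.
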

	\begin{proof}
		It is easy to see that $\mathfrak{R}$ is well-defined on ${B \vec{\dot{K}}_{(p/p_0)^{\prime}, (q/p_0)^{\prime}}^{-p_0\alpha, p_0\lambda}}$ by Theorem \ref{boundedMS}.
		The inequality \eqref{R_1} is immediate. Notice that
		$$
		\begin{aligned}
			M_S(\mathfrak{R} h) & \leq \sum_{k=0}^{\infty} \frac{M_S^{k+1}(|h|)}{2^k\left\|M_S\right\|_{B \vec{\dot{K}}_{(p/p_0)^{\prime}, (q/p_0)^{\prime}}^{-p_0\alpha, p_0\lambda}}^k}
			\\&=
			2\left\|M_S\right\|_{B \vec{\dot{K}}_{(p/p_0)^{\prime}, (q/p_0)^{\prime}}^{-p_0\alpha, p_0\lambda}} \sum_{k=1}^{\infty} \frac{M_S^k(|h|)}{2^k\left\|M_S\right\|_{B \vec{\dot{K}}_{(p/p_0)^{\prime}, (q/p_0)^{\prime}}^{-p_0\alpha, p_0\lambda}}^k} \\
			& \leq 2\left\|M_S\right\|_{B \vec{\dot{K}}_{(p/p_0)^{\prime}, (q/p_0)^{\prime}}^{-p_0\alpha, p_0\lambda}} \mathfrak{R} h.
		\end{aligned}
		$$
		\eqref{R_2} and \eqref{R_3} are consequences of the boundedness of $M_S$ on ${B \vec{\dot{K}}_{(p/p_0)^{\prime}, (q/p_0)^{\prime}}^{-p_0\alpha, p_0\lambda}}$.
	\end{proof}
	We are now ready to state the extrapolation theorem on product Morrey-Herz spaces.
	\begin{theorem}\label{extra.ori.}
		Let $\alpha \in \mathbb{R}$, $0<p_0<p,q<\infty$, $0<\lambda<\min\{\alpha+n/p,\alpha+m/p\}$, and $$\max \left\{-\frac{n}{p},-\frac{m}{p}\right\}<\alpha<\min \left\{n\left(\frac{1}{p_0}-\frac{1}{p}\right), m\left(\frac{1}{p_0}-\frac{1}{p}\right)\right\}.$$
		Suppose that for every
		$$
		v\in\left\{\mathfrak{R} h: \|h\|_{{B \vec{\dot{K}}_{(p/p_0)^{\prime}, (q/p_0)^{\prime}}^{-p_0\alpha, p_0\lambda}}\left(\mathbb{R}^n \times \mathbb{R}^m\right)} \leq 1\right\},
		$$
		and every $(f, g) \in \mathfrak{F}$, 
		\begin{align}\label{extra.ori_1}
			\int_{\mathbb{R}^n \times \R^m} |f(x)|^{p_0} v(x) d x \lesssim \int_{\mathbb{R}^n \times \R^m} |g(x)|^{p_0} v(x) d x<\infty.
		\end{align}
		Then for every $(f, g) \in \mathfrak{F}$, we have
		\begin{align}\label{extra.ori_2}
			\|f\|_{{M \vec{\dot{K}}_{p, q}^{\alpha, \lambda}}\left(\mathbb{R}^n \times \mathbb{R}^m\right)} \lesssim\|g\|_{{M \vec{\dot{K}}_{p, q}^{\alpha, \lambda}}\left(\mathbb{R}^n \times \mathbb{R}^m\right)}
		\end{align}
		Moreover, for $\left(f_i, g_i\right) \in \mathfrak{F}$, $i \in \mathbb{N}$, satisfying \eqref{extra.ori_1}, there holds
		\begin{align}\label{extra.ori_3}
			\left\|\left(\sum_{i \in \mathbb{N}}\left|f_i\right|^{p_0}\right)^{\frac{1}{p_0}}\right\|_{{M \vec{\dot{K}}_{p, q}^{\alpha, \lambda}}\left(\mathbb{R}^n \times \mathbb{R}^m\right)} \lesssim\left\|\left(\sum_{i \in \mathbb{N}}\left|g_i\right|^{p_0}\right)^{\frac{1}{p_0}}\right\|_{{M \vec{\dot{K}}_{p, q}^{\alpha, \lambda}}\left(\mathbb{R}^n \times \mathbb{R}^m\right)}
		\end{align}
	\end{theorem}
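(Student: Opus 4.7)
The plan is to combine the duality pairing of Theorem~\ref{PHMdual} with the Rubio de Francia algorithm $\mathfrak{R}$. Raising both sides of \eqref{extra.ori_2} to the $p_0$-th power and using homogeneity, one rewrites $\|f\|_{M\vec{\dot{K}}_{p,q}^{\alpha,\lambda}}^{p_0} = \||f|^{p_0}\|_{M\vec{\dot{K}}_{p/p_0,\,q/p_0}^{p_0\alpha,\,p_0\lambda}}$, and Theorem~\ref{PHMdual} identifies this norm with the supremum of $\int_{\R^n\times\R^m}|f|^{p_0}h\,dx$ over nonnegative $h$ in the unit ball of $B\vec{\dot{K}}_{(p/p_0)',\,(q/p_0)'}^{-p_0\alpha,\,p_0\lambda}$. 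Fixing such an $h$, the pointwise inequality \eqref{R_1} gives $h\le\mathfrak{R}h$, the hypothesis \eqref{extra.ori_1} with $v=\mathfrak{R}h$ transfers the estimate to $g$, and a further application of the duality pairing followed by \eqref{R_2} produces
\begin{align*}
\int |f|^{p_0}h\,dx &\le \int |f|^{p_0}\mathfrak{R}h\,dx \lesssim \int |g|^{p_0}\mathfrak{R}h\,dx \\
&\lesssim \||g|^{p_0}\|_{M\vec{\dot{K}}_{p/p_0,\,q/p_0}^{p_0\alpha,\,p_0\lambda}}\,\|\mathfrak{R}h\|_{B\vec{\dot{K}}_{(p/p_0)',\,(q/p_0)'}^{-p_0\alpha,\,p_0\lambda}}\lesssim \|g\|_{M\vec{\dot{K}}_{p,q}^{\alpha,\lambda}}^{p_0}.
\end{align*}
Taking the supremum over $h$ yields \eqref{extra.ori_2}.

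\textbf{Vector-valued part.} For \eqref{extra.ori_3} the plan is to apply the scalar conclusion just obtained to the augmented family $\tilde{\mathfrak{F}}=\{(F,G):F=(\sum_{i}|f_i|^r)^{1/r},\ G=(\sum_{i}|g_i|^r)^{1/r},\ (f_i,g_i)\in\mathfrak{F}\}$, which requires verifying the weighted hypothesis $\int F^{p_0}v\,dx\lesssim\int G^{p_0}v\,dx$ for every $v=\mathfrak{R}h$. When $r=p_0$ this is immediate from Fubini, since $F^{p_0}=\sum_i|f_i|^{p_0}$. For general $r>1$ one exploits the fact, guaranteed by \eqref{R_3}, that every such $v$ is an $A_1^*$ weight with uniformly controlled constant; classical bi-parameter Rubio de Francia extrapolation (see \cite{Ho2016_1,Koki.2018}) then upgrades the scalar hypothesis at $p_0$ to the analogous inequality $\int|f_i|^r\tilde v\,dx\lesssim \int|g_i|^r\tilde v\,dx$ at any exponent $r$ with a related weight $\tilde v\in A_r^*$. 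Summing over $i$ produces the $\ell^r$-valued inequality in $L^r(\tilde v)$, and a second application of the classical extrapolation returns this information to the $L^{p_0}(\mathfrak{R}h)$ level required above. The scalar part of the theorem, applied to $\tilde{\mathfrak{F}}$, then delivers \eqref{extra.ori_3}.

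\textbf{Expected main obstacle.} The hardest step is the vector-valued part, where the extrapolation arguments must be chained so that the weight produced at each stage remains in (and has a uniformly controlled constant with respect to) the class required by the next; in particular, one must verify that the $A_r^*$-weight produced by the classical one-parameter jump from $\mathfrak{R}h$ is compatible with a Rubio de Francia iterate acting on the block space $B\vec{\dot{K}}_{(p/r)',\,(q/r)'}^{-r\alpha,\,r\lambda}$. A minor technicality is the finiteness requirement $\int|g|^{p_0}v<\infty$ in \eqref{extra.ori_1}, which must be preserved under the augmentation $\mathfrak{F}\mapsto\tilde{\mathfrak{F}}$; this is handled by a standard truncation-and-limit argument on $g_i$ combined with Theorem~\ref{BK.Ban}.
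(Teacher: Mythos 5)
Your scalar argument is exactly the paper's proof: write $\|f\|_{M \vec{\dot{K}}_{p, q}^{\alpha, \lambda}}^{p_0}=\||f|^{p_0}\|_{M \vec{\dot{K}}_{p/p_0, q/p_0}^{p_0\alpha, p_0\lambda}}$, realize it as a supremum of $\int |f|^{p_0}h$ over the unit ball of $B \vec{\dot{K}}_{(p/p_0)^{\prime}, (q/p_0)^{\prime}}^{-p_0\alpha, p_0\lambda}$ via Theorem \ref{PHMdual}, use $|h|\le \mathfrak{R}h$, the hypothesis \eqref{extra.ori_1} with $v=\mathfrak{R}h$, and then H\"older together with \eqref{R_2}. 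For the vector-valued inequality \eqref{extra.ori_3} the paper gives no argument beyond citing \cite[Corollary 3.12]{Cruz2011}, and your sketch is precisely that standard route (extrapolate in the weighted Lebesgue scale to exponent $r$, sum by Fubini, return to $L^{p_0}(\mathfrak{R}h)$, then apply the scalar case to the augmented family); the one caution, which you yourself flag, is that the intermediate classical bi-parameter extrapolation requires the weighted hypothesis for a full class of strong $A^*$ weights with constants controlled by the characteristic, whereas \eqref{extra.ori_1} is literally assumed only for the family $\{\mathfrak{R}h\}$ — the paper leaves this point implicit as well, and it is harmless in its applications, where the pairs satisfy the inequality for all $A_1^*$ weights.
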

	\begin{proof}
		Our proof uses the Rubio de Francia iteration algorithm; see \cite{Cruz2011}. It suffices to prove \eqref{extra.ori_2}, since the proof of \eqref{extra.ori_3} is similar to that of \cite[Corollary 3.12]{Cruz2011}.
		Let $(f, g) \in \mathfrak{F}$ with $g \in {M \vec{\dot{K}}_{p, q}^{\alpha,\lambda}}$. For any non-negative $h \in {B \vec{\dot{K}}_{(p/p_0)^{\prime}, (q/p_0)^{\prime}}^{-p_0\alpha, p_0\lambda}}$ with $\|h\|_{B \vec{\dot{K}}_{(p/p_0)^{\prime}, (q/p_0)^{\prime}}^{-p_0\alpha, p_0\lambda}} \leq 1$, since $g \in {M \vec{\dot{K}}_{p, q}^{\alpha,\lambda}}$ and $|h| \leq \mathfrak{R} h$, we get from \eqref{extra.ori_1} with $v=\mathfrak{R} h$ and by \eqref{R_2},
		\begin{align*}
			\int_{\mathbb{R}^n \times \mathbb{R}^m} |f(z)|^{p_0} h(z) d z 
			&\lesssim \int_{\mathbb{R}^n \times \mathbb{R}^m} |f(z)|^{p_0} \mathfrak{R} h(z) d z
			\lesssim \int_{\mathbb{R}^n \times \mathbb{R}^m} |g(z)|^{p_0} \mathfrak{R} h(z) d z \\
			& \lesssim \||g|^{p_0}\|_{M \vec{\dot{K}}_{p/p_0, q/p_0}^{p_0\alpha, p_0\lambda}}
			\|\mathfrak{R} h\|_{B \vec{\dot{K}}_{(p/p_0)^{\prime}, (q/p_0)^{\prime}}^{-p_0\alpha, p_0\lambda}} \\
			&\lesssim \|g\|^{p_0}_{M \vec{\dot{K}}_{p, q}^{\alpha, \lambda}}\| h\|_{B \vec{\dot{K}}_{(p/p_0)^{\prime}, (q/p_0)^{\prime}}^{-p_0\alpha, p_0\lambda}}.
		\end{align*}
		It is easy to see that 
		\begin{align*}
			\sup \left\{\int_{\mathbb{R}^n \times \mathbb{R}^m} |f(z)|^{p_0} h(z) d z : 0\le h,\ \|h\|_{{B \vec{\dot{K}}_{(p/p_0)^{\prime}, (q/p_0)^{\prime}}^{-p_0\alpha, p_0\lambda}}} \leq 1\right\} \lesssim \|g\|^{p_0}_{M \vec{\dot{K}}_{p, q}^{\alpha, \lambda}}
		\end{align*} 
		Hence, we get
		\begin{align*}
			\|f\|^{p_0}_{M \vec{\dot{K}}_{p, q}^{\alpha, \lambda}}
			& =\||f|^{p_0}\|_{M \vec{\dot{K}}_{p/p_0, q/p_0}^{p_0\alpha, p_0\lambda}}= \sup \left\{\int_{\mathbb{R}^n \times \mathbb{R}^m} |f(z)|^{p_0} h(z) d z : \|h\|_{{B \vec{\dot{K}}_{(p/p_0)^{\prime}, (q/p_0)^{\prime}}^{-p_0\alpha, p_0\lambda}}} \leq 1\right\} \\
			& \lesssim \|g\|^{p_0}_{M \vec{\dot{K}}_{p, q}^{\alpha, \lambda}}.
		\end{align*}
		The proof is accomplished.
	\end{proof}
	The boundedness of certain operators on product Morrey-Herz spaces can be deduced from Theorem~\ref{extra.ori.} as follows.
	\begin{theorem}\label{extra.T.}
		Let $\alpha \in \mathbb{R}$, $0<p_0<p,q<\infty$, $0<\lambda<\min\{\alpha+n/p,\alpha+m/p\}$, and $$\max \left\{-\frac{n}{p},-\frac{m}{p}\right\}<\alpha<\min \left\{n\left(\frac{1}{p_0}-\frac{1}{p}\right), m\left(\frac{1}{p_0}-\frac{1}{p}\right)\right\}.$$
		Suppose that for every
		$h \in {B \vec{\dot{K}}_{(p/p_0)^{\prime}, (q/p_0)^{\prime}}^{-p_0\alpha, p_0\lambda}\left(\mathbb{R}^n \times \mathbb{R}^m\right)}$ with $\|h\|_{B \vec{\dot{K}}_{(p/p_0)^{\prime}, (q/p_0)^{\prime}}^{-p_0\alpha, p_0\lambda}\left(\mathbb{R}^n \times \mathbb{R}^m\right)} \leq 1$,
		$T$ is bounded on $L^{p_0}\left(\mathbb{R}^n \times \mathbb{R}^m,\mathfrak{R}h\right)$, with a bound independent of $h$. Then $T$ can be extended to a bounded operator on ${{M \vec{\dot{K}}_{p, q}^{\alpha, \lambda}}\left(\mathbb{R}^n \times \mathbb{R}^m\right)}$.
		Moreover, we have
		\begin{align}\label{extra.T_3}
			\left\|\left(\sum_{i \in \mathbb{N}}\left|Tf_i\right|^{p_0}\right)^{\frac{1}{p_0}}\right\|_{{M \vec{\dot{K}}_{p, q}^{\alpha, \lambda}}\left(\mathbb{R}^n \times \mathbb{R}^m\right)} \lesssim\left\|\left(\sum_{i \in \mathbb{N}}\left|f_i\right|^{p_0}\right)^{\frac{1}{p_0}}\right\|_{{M \vec{\dot{K}}_{p, q}^{\alpha, \lambda}}\left(\mathbb{R}^n \times \mathbb{R}^m\right)}.
		\end{align}
	\end{theorem}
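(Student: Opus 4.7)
The plan is to deduce Theorem~\ref{extra.T.} directly from the abstract extrapolation result Theorem~\ref{extra.ori.} by a judicious choice of the pair family $\mathfrak{F}$. First, I would fix a dense subclass $\mathcal{D}$ of $M\vec{\dot{K}}_{p,q}^{\alpha,\lambda}(\mathbb{R}^n \times \mathbb{R}^m)$ on which $T$ is a priori well defined and for which the weighted integrals that will appear are finite; bounded functions with compact support away from the coordinate axes form a convenient choice. I then set
\[
\mathfrak{F} := \bigl\{(|Tf|, |f|) : f \in \mathcal{D}\bigr\}.
\]

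Next, for every $v = \mathfrak{R}h$ with $\|h\|_{B\vec{\dot{K}}_{(p/p_0)',(q/p_0)'}^{-p_0\alpha,p_0\lambda}} \leq 1$, the assumed boundedness of $T$ on $L^{p_0}(\mathbb{R}^n \times \mathbb{R}^m, \mathfrak{R}h)$ gives at once
\[
\int_{\mathbb{R}^n \times \mathbb{R}^m} |Tf(z)|^{p_0} v(z)\, dz \lesssim \int_{\mathbb{R}^n \times \mathbb{R}^m} |f(z)|^{p_0} v(z)\, dz,
\]
and the right-hand side is finite for $f \in \mathcal{D}$ because $v \in L^1_{\mathrm{loc}}(\mathbb{R}^n \times \mathbb{R}^m)$ by Theorem~\ref{BK.Ban}. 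This is precisely hypothesis \eqref{extra.ori_1} for our family $\mathfrak{F}$, so Theorem~\ref{extra.ori.} applies and the conclusion \eqref{extra.ori_2} yields
\[
\|Tf\|_{M\vec{\dot{K}}_{p,q}^{\alpha,\lambda}} \lesssim \|f\|_{M\vec{\dot{K}}_{p,q}^{\alpha,\lambda}}, \qquad f \in \mathcal{D}.
\]
A standard density argument then extends $T$ as a bounded operator on the whole of $M\vec{\dot{K}}_{p,q}^{\alpha,\lambda}(\mathbb{R}^n \times \mathbb{R}^m)$. The vector-valued inequality \eqref{extra.T_3} follows by the same scheme: apply \eqref{extra.ori_3} to the family $\{(|Tf_i|,|f_i|)\}_{i \in \mathbb{N}}$, whose scalar hypotheses \eqref{extra.ori_1} hold from the same weighted bound for $T$.

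The main obstacle is essentially bookkeeping around uniformity. The abstract extrapolation theorem demands that the implicit constant in \eqref{extra.ori_1} be independent of $v$, so one must check that the hypothesized weighted bound for $T$ is quantitatively controlled uniformly over $v \in \{\mathfrak{R}h : \|h\|_{B\vec{\dot{K}}_{(p/p_0)',(q/p_0)'}^{-p_0\alpha,p_0\lambda}} \leq 1\}$. This uniformity is inherited from the $A_1^*$ control \eqref{R_3}: any classical weighted estimate for $T$ whose constant depends only on $[v]_{A_1^*}$ (or more generally on $[v]_{A_{p_0}^*}$) automatically yields the required uniform bound. A secondary technicality is that Morrey-type spaces are generally non-separable, so the density/extension step must be handled with care; nonetheless, once the a priori inequality on $\mathcal{D}$ is in place, the bound extends to the full space through the usual monotonicity argument available in Banach lattice settings.
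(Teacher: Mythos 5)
Your overall scheme (feed the family $\mathfrak{F}=\{(|Tf|,|f|)\}$ into Theorem~\ref{extra.ori.}) is the same as the paper's, but the way you make $Tf$ meaningful is where your argument breaks. You restrict to a subclass $\mathcal{D}$ of bounded, compactly supported functions and then invoke ``a standard density argument'' to extend the a priori bound to all of $M\vec{\dot{K}}_{p,q}^{\alpha,\lambda}$. That step fails: Morrey-type spaces (and in particular these product Morrey--Herz spaces) are non-separable and nice functions are \emph{not} dense in them -- this is exactly why the paper stresses that its extrapolation method does not rely on density. Your fallback, ``the usual monotonicity argument available in Banach lattice settings,'' does not repair this, because $T$ is an arbitrary operator with no positivity or monotonicity structure; lattice monotonicity of the norm cannot extend a bounded operator from a non-dense subspace. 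So as written, the conclusion is only obtained on $\mathcal{D}$, not on $M\vec{\dot{K}}_{p,q}^{\alpha,\lambda}$.

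The paper avoids this entirely by one short computation you are missing: for every admissible $h$ and every $f\in M\vec{\dot{K}}_{p,q}^{\alpha,\lambda}$, H\"older's inequality in the duality pairing between $M\vec{\dot{K}}_{p/p_0,q/p_0}^{p_0\alpha,p_0\lambda}$ and its predual $B\vec{\dot{K}}_{(p/p_0)',(q/p_0)'}^{-p_0\alpha,p_0\lambda}$ (Theorem~\ref{PHMdual}), combined with \eqref{R_2}, gives
\begin{equation*}
\int_{\mathbb{R}^n\times\mathbb{R}^m}|f(z)|^{p_0}\,\mathfrak{R}h(z)\,dz
\;\lesssim\; \|f\|_{M\vec{\dot{K}}_{p,q}^{\alpha,\lambda}}^{p_0}\,\|h\|_{B\vec{\dot{K}}_{(p/p_0)',(q/p_0)'}^{-p_0\alpha,p_0\lambda}}<\infty .
\end{equation*}
Hence the whole space $M\vec{\dot{K}}_{p,q}^{\alpha,\lambda}$ embeds into $L^{p_0}(\mathbb{R}^n\times\mathbb{R}^m,\mathfrak{R}h)$, so $Tf$ is already defined for \emph{every} $f\in M\vec{\dot{K}}_{p,q}^{\alpha,\lambda}$ by the hypothesis on $T$, and simultaneously the finiteness of the right-hand side in \eqref{extra.ori_1} is guaranteed for the full family $\mathfrak{F}=\{(|Tf|,|f|):f\in M\vec{\dot{K}}_{p,q}^{\alpha,\lambda}\}$. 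With that, Theorem~\ref{extra.ori.} applies directly and yields both the scalar bound and \eqref{extra.T_3}; no density or extension argument is needed. Your remark on uniformity of the constant in $v$ (via \eqref{R_3} and the $A_1^*$ control) is correct and consistent with the paper, but the core of the proof should be the embedding above rather than a density step that is unavailable in this setting.
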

	\begin{proof}
		For any $f \in {M \vec{\dot{K}}_{p, q}^{\alpha, \lambda}}$, $h \in {B \vec{\dot{K}}_{(p/p_0)^{\prime}, (q/p_0)^{\prime}}^{-p_0\alpha, p_0\lambda}}$ with $\|h\|_{B \vec{\dot{K}}_{(p/p_0)^{\prime}, (q/p_0)^{\prime}}^{-p_0\alpha, p_0\lambda}} \leq 1$, by Hölder's inequality and \eqref{R_2}, 
		\begin{align*}
			\int_{\mathbb{R}^n \times \mathbb{R}^m} |f(z)|^{p_0} \mathfrak{R} h(z) d z
			&\lesssim \||f|^{p_0}\|_{M \vec{\dot{K}}_{p/p_0, q/p_0}^{p_0\alpha, p_0\lambda}}\|\mathfrak{R} h\|_{B \vec{\dot{K}}_{(p/p_0)^{\prime}, (q/p_0)^{\prime}}^{-p_0\alpha, p_0\lambda}} \\
			& \lesssim \|f\|^{p_0}_{M \vec{\dot{K}}_{p, q}^{\alpha, \lambda}}\| h\|_{B \vec{\dot{K}}_{(p/p_0)^{\prime}, (q/p_0)^{\prime}}^{-p_0\alpha, p_0\lambda}} < \infty.\label{extra.T_4}
		\end{align*}
		Set $\mathfrak{F}=\left\{(|T f|,|f|): f \in {M \vec{\dot{K}}_{p, q}^{\alpha, \lambda}}\right\}$, and then the boundedness of $T$ assures that \eqref{extra.ori_1} is valid for the pairs $\mathfrak{F}$. The desired results can follow from Theorem \ref{extra.ori.}.
	\end{proof}
	\section{Applications}\label{sec5}
	This section gives several applications of the extrapolation theorem on product Morrey-Herz spaces. We first establish a bi-parameter version of the Fefferman-Stein vector-valued inequality. We then prove a John-Nirenberg inequality and obtain a new characterization of little bmo in terms of the product Morrey-Herz norm. Finally, we apply the extrapolation theorem to the boundedness of bi-parameter operators and their commutators on product Morrey-Herz spaces.
	\subsection{Fefferman-Stein vector-valued strong maximal inequalities on product Morrey-Herz spaces}~~
	
	The classical one-parameter Fefferman-Stein vector-valued maximal inequalities have been extended to various Morrey-type spaces, including Morrey spaces \cite{Sawano2007,Tang2005}, Morrey spaces with variable exponents \cite{Ho2015}, and Morrey spaces associated with Banach function spaces \cite{Ho2017_2}. Consequently, Theorem \ref{F.S.ineq.} extends the classical Fefferman-Stein vector-valued maximal inequalities to the bi-parameter setting of product Morrey-Herz spaces.
	
	For every \( p_0 > 1 \) and any \( v \in A_1^*(\mathbb{R}^n \times \mathbb{R}^m) \subseteq A_{p_0}^*(\mathbb{R}^n \times \mathbb{R}^m) \), \( M_S \) is bounded on \( L^{p_0}(v) \). According to Theorem \ref{extra.T.}, the Fefferman-Stein vector-valued strong maximal inequalities on product Morrey-Herz spaces are as follows.
	\begin{theorem}\label{F.S.ineq.}
		Let $\alpha \in \mathbb{R}$, $1<{p_0}<p,q<\infty$, $0<\lambda<\min\{\alpha+n/p,\alpha+m/p\}$, and $$\max \left\{-\frac{n}{p},-\frac{m}{p}\right\}<\alpha<\min \left\{n\left(\frac{1}{p_0}-\frac{1}{p}\right), m\left(\frac{1}{p_0}-\frac{1}{p}\right)\right\}.$$
		Then $M_S$ is bounded on ${M \vec{\dot{K}}_{p, q}^{\alpha, \lambda}}\left(\mathbb{R}^n \times \mathbb{R}^m\right)$. Moreover, we have
		\begin{equation}\label{F.S.ineq_2}
			\left\|\left(\sum_{i \in \mathbb{N}}\left|M_S f_i\right|^{p_0}\right)^{\frac{1}{p_0}}\right\|_{M \vec{\dot{K}}_{p, q}^{\alpha, \lambda}\left(\mathbb{R}^n \times \mathbb{R}^m\right)} \lesssim\left\|\left(\sum_{i \in \mathbb{N}}\left|f_i\right|^{p_0}\right)^{\frac{1}{p_0}}\right\|_{M \vec{\dot{K}}_{p, q}^{\alpha, \lambda}\left(\mathbb{R}^n \times \mathbb{R}^m\right)}.
		\end{equation}
	\end{theorem}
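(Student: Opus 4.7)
The plan is to obtain both statements as an essentially direct application of the extrapolation machinery in Theorem \ref{extra.T.} with $T = M_S$. The only real work is to check that the hypothesis of Theorem \ref{extra.T.} is met; once that is done, both the scalar boundedness of $M_S$ on $M \vec{\dot{K}}_{p,q}^{\alpha,\lambda}$ and the vector-valued inequality \eqref{F.S.ineq_2} drop out, the latter being exactly \eqref{extra.T_3}.

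First I would fix $p_0$ with $1 < p_0 < \min\{p,q\}$; this is possible because $p,q > 1$, and with this choice the parameter constraints in Theorem \ref{extra.T.} coincide with those assumed here (the conditions $\alpha + n/p > \lambda > 0$ and the $\alpha$ bound are imposed in both statements, and $0 < p_0 < p,q$ is built into the choice). Consequently the conjugate exponents $(p/p_0)'$ and $(q/p_0)'$ are finite, and the space $B \vec{\dot{K}}_{(p/p_0)', (q/p_0)'}^{-p_0\alpha, p_0\lambda}$ on which the Rubio de Francia iteration algorithm $\mathfrak{R}$ acts is well-defined; moreover Theorem \ref{boundedMS} ensures that $M_S$ is bounded on this space, so $\mathfrak{R}$ itself is well-defined.

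The central observation is then the following. Fix any $h \in B \vec{\dot{K}}_{(p/p_0)', (q/p_0)'}^{-p_0\alpha, p_0\lambda}$ with $\|h\|_{B \vec{\dot{K}}_{(p/p_0)', (q/p_0)'}^{-p_0\alpha, p_0\lambda}} \le 1$. Property \eqref{R_3} gives $[\mathfrak{R}h]_{A_1^*} \le 2\|M_S\|_{B \vec{\dot{K}}_{(p/p_0)', (q/p_0)'}^{-p_0\alpha, p_0\lambda}} < \infty$, so $\mathfrak{R}h \in A_1^*(\mathbb{R}^n \times \mathbb{R}^m)$. Since $p_0 > 1$, the embedding $A_1^* \subseteq A_{p_0}^*$ gives $\mathfrak{R}h \in A_{p_0}^*$, and the product-weighted boundedness recorded in Section \ref{sec2} (the weighted $M_S$-lemma from \cite{Gar2011}) yields that $M_S$ is bounded on $L^{p_0}(\mathbb{R}^n \times \mathbb{R}^m, \mathfrak{R}h)$. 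This is exactly the hypothesis required by Theorem \ref{extra.T.}.

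Applying Theorem \ref{extra.T.} with $T = M_S$ then produces the scalar boundedness $\|M_S f\|_{M \vec{\dot{K}}_{p,q}^{\alpha,\lambda}} \lesssim \|f\|_{M \vec{\dot{K}}_{p,q}^{\alpha,\lambda}}$, while \eqref{extra.T_3} applied to $\{f_i\}_{i \in \mathbb{N}}$ gives \eqref{F.S.ineq_2} for every $1 < r < \infty$. I do not foresee a genuine obstacle: the only delicate point is ensuring that the parameter range of the present theorem falls inside the range of the extrapolation theorem, but the hypotheses were stated so that this is automatic once $p_0$ is chosen with $1 < p_0 < \min\{p,q\}$.
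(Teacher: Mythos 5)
Your proposal is correct and follows essentially the same route as the paper: the paper likewise notes that each Rubio de Francia weight $\mathfrak{R}h$ is an $A_1^*\subseteq A_{p_0}^*$ weight, so the weighted $L^{p_0}$ boundedness of $M_S$ from \cite{Gar2011} verifies the hypothesis of Theorem \ref{extra.T.}, which then yields both the scalar and the vector-valued conclusions. No gap to report.
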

	
	\subsection{John-Nirenberg inequality and a new characterization of bmo in terms of product Morrey-Herz spaces}~~
	
	First, we recall the definition of little bmo. A function \(f\in L_{\mathrm{loc}}(\mathbb R^n\times\mathbb R^m)\) belongs to \(\operatorname{bmo}(\mathbb R^n\times\mathbb R^m)\) if
	$$
	\|f\|_{\text{bmo}(\mathbb{R}^n \times \mathbb{R}^m)} = \sup_{R \in \mathcal{R}} \frac{\left\| (f - f_R) \chi_R \right\|_{L^1(\mathbb{R}^n \times \mathbb{R}^m)}}{\left\| \chi_R \right\|_{L^1(\mathbb{R}^n \times \mathbb{R}^m)}}<\infty,
	$$
	where \( f_R = \frac{1}{|R|} \int_R f(z) \, dz \).
	An important result is that \( f \in \operatorname{bmo}(\mathbb{R}^n \times \mathbb{R}^m) \) is equivalent to \( f(x, \cdot) \in \operatorname{BMO}(\mathbb{R}^m) \) uniformly in \( x \in \mathbb{R}^n \) and, symmetrically, \( f(\cdot, y) \in \operatorname{BMO}(\mathbb{R}^n) \) uniformly in \( y \in \mathbb{R}^m \), cf. \cite{Holmes2018}.
	
	In 2019, Hart and Torres \cite{Hart2019} established the John-Nirenberg inequality for little BMO, which states that for any \( \gamma > 0 \) and \( R \in \mathcal{R} \),
	\begin{align}\label{bmo_1}
		\left| \left\{ z \in R : \left| f(z) - f_R \right| > \gamma \right\} \right| \leq C_1 |R| e^{-\frac{C_2 \gamma}{\|f\|_{\text{bmo}}}}, \quad f \in \operatorname{bmo}(\mathbb{R}^n \times \mathbb{R}^m) \setminus \mathcal{C},
	\end{align}
	where \(\mathcal C\) is the set of constant functions and \(C_1,C_2>0\) are independent of \(f\), \(R\), and \(\gamma\). The John-Nirenberg inequality provides equivalent characterizations of little bmo; see \cite{Hart2019, Holmes2018} for further details. Below we extend this inequality to product Morrey-Herz spaces.
	
	First, we need to estimate the product Morrey-Herz norm of \( \chi_R \), where \( R \in \mathcal{R} \).
	\begin{lemma}\label{lembmo}
		Let $\alpha \in \mathbb{R}$, $1<p,q<\infty$, $0<\lambda<\min\{\alpha+n/p,\alpha+m/p\}$, and $$
		\max \left\{-\frac{n}{p},-\frac{m}{p}\right\}<\alpha<\min \left\{n\left(1-\frac{1}{p}\right), m\left(1-\frac{1}{p}\right)\right\}.
		$$ Then for any $R \in \mathcal{R}$,
		$$
		\left\|\chi_R\right\|_{M \vec{\dot{K}}_{p, q}^{\alpha, \lambda}\left(\mathbb{R}^n \times \mathbb{R}^m\right)}\left\|\chi_R\right\|_{B \vec{\dot{K}}_{p^{\prime}, q^{\prime}}^{-\alpha, \lambda}\left(\mathbb{R}^n \times \mathbb{R}^m\right)} \approx |R|.
		$$
	\end{lemma}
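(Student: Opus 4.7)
The plan is to evaluate both norms for a dyadic rectangle $R = R_{l_1,l_2}$ centered at the origin and then combine them through the balance law of Lemma \ref{FM.}; the general $R \in \mathcal{R}$ is handled by enclosing it in a centered dyadic rectangle of comparable size and invoking the lattice property of both norms (for the block side, Proposition \ref{pre.extra.} and Theorem \ref{BK.Ban}).

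First I would use Lemma \ref{chara.PMK} to get $\|\chi_{R_{l_1,l_2}}\|_{M\vec{\dot{K}}_{p,q}^{\alpha,\lambda}} \approx 2^{(l_1+l_2)(\alpha + n/p - \lambda)}$, and the same geometric-series calculation applied directly to the definition of $\vec{\dot{K}}_{p,q}^{\alpha}$ (which converges because $\alpha + n/p > \lambda > 0$) gives $\|\chi_{R_{l_1,l_2}}\|_{\vec{\dot{K}}_{p,q}^{\alpha}} \approx 2^{(l_1+l_2)(\alpha + n/p)}$. Comparing the two,
$$
\|\chi_R\|_{M\vec{\dot{K}}_{p,q}^{\alpha,\lambda}} \approx 2^{-(l_1+l_2)\lambda}\,\|\chi_R\|_{\vec{\dot{K}}_{p,q}^{\alpha}}.
$$

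Next I would sandwich the block-Herz norm between matching bounds. For the upper bound, the choice $c := 2^{\lambda(l_1+l_2)} \|\chi_R\|_{\vec{\dot{K}}_{p',q'}^{-\alpha}}$ makes $\chi_R/c$ a product Herz-block in the sense of Definition \ref{defBK}, so $\|\chi_R\|_{B\vec{\dot{K}}_{p',q'}^{-\alpha,\lambda}} \le c$. For the matching lower bound I would invoke the duality $(B\vec{\dot{K}}_{p',q'}^{-\alpha,\lambda})^{*} = M\vec{\dot{K}}_{p,q}^{\alpha,\lambda}$ from Theorem \ref{PHMdual} and test against $g = \chi_R / \|\chi_R\|_{M\vec{\dot{K}}_{p,q}^{\alpha,\lambda}}$, which yields
$$
\|\chi_R\|_{B\vec{\dot{K}}_{p',q'}^{-\alpha,\lambda}} \gtrsim \frac{|R|}{\|\chi_R\|_{M\vec{\dot{K}}_{p,q}^{\alpha,\lambda}}}.
$$

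Combining the upper block estimate with the comparison from the second paragraph yields
$$
\|\chi_R\|_{M\vec{\dot{K}}_{p,q}^{\alpha,\lambda}} \|\chi_R\|_{B\vec{\dot{K}}_{p',q'}^{-\alpha,\lambda}} \lesssim \|\chi_R\|_{\vec{\dot{K}}_{p,q}^{\alpha}} \|\chi_R\|_{\vec{\dot{K}}_{p',q'}^{-\alpha}} \approx |R|,
$$
where the final equivalence is exactly Lemma \ref{FM.}; the duality lower bound gives the reverse inequality at once. The main obstacle is extracting the sharp lower bound on the block norm: a plain application of Hölder through the $\vec{\dot{K}}$-duality of Lemma \ref{Hölder-P-H} would drop the factor $2^{\lambda(l_1+l_2)}$ and fail to match the upper bound, so it is essential to pair against the Morrey-Herz space via the stronger duality of Theorem \ref{PHMdual} in order to recover the correct scaling. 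Everything else reduces to summing two geometric series.
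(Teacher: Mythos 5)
Your explicit computations for the centered dyadic rectangles $R_{l_1,l_2}$ are correct, and your lower bound $|R|\lesssim \left\|\chi_R\right\|_{M \vec{\dot{K}}_{p, q}^{\alpha, \lambda}}\left\|\chi_R\right\|_{B \vec{\dot{K}}_{p^{\prime}, q^{\prime}}^{-\alpha, \lambda}}$ via the duality of Theorem \ref{PHMdual} is exactly how the paper argues that half. The genuine gap is your reduction of a general $R\in\mathcal{R}$ to the centered dyadic case. The class $\mathcal{R}$ contains rectangles with arbitrary centers, and such a rectangle cannot, in general, be enclosed in a centered dyadic rectangle of comparable measure: for a unit rectangle $Q(x,1)\times Q(y,1)$ with $|x|\approx 2^{l_1}$, $|y|\approx 2^{l_2}$ large, the smallest $R_{l_1',l_2'}\supseteq R$ has measure $\approx 2^{l_1 n+l_2 m}\gg |R|=1$. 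Consequently the lattice property (Proposition \ref{pre.extra.}) only yields $\left\|\chi_R\right\|_{M \vec{\dot{K}}_{p, q}^{\alpha, \lambda}}\left\|\chi_R\right\|_{B \vec{\dot{K}}_{p^{\prime}, q^{\prime}}^{-\alpha, \lambda}}\lesssim \left\|\chi_{R_0}\right\|_{M \vec{\dot{K}}_{p, q}^{\alpha, \lambda}}\left\|\chi_{R_0}\right\|_{B \vec{\dot{K}}_{p^{\prime}, q^{\prime}}^{-\alpha, \lambda}}\approx |R_0|$, which is far weaker than $\lesssim|R|$ precisely for the off-origin rectangles that matter in the bmo applications. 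Relatedly, your identity $\left\|\chi_R\right\|_{M \vec{\dot{K}}_{p, q}^{\alpha, \lambda}}\approx 2^{-(l_1+l_2)\lambda}\left\|\chi_R\right\|_{\vec{\dot{K}}_{p, q}^{\alpha}}$ is only proved for $R=R_{l_1,l_2}$; for general $R$ with minimal enclosing $R_{l_1,l_2}$ one must additionally control the truncations $2^{-(L_1+L_2)\lambda}\|\chi_R\chi_{R_{L_1,L_2}}\|_{\vec{\dot{K}}_{p, q}^{\alpha}}$ at scales $L_1+L_2<l_1+l_2$ (using $\alpha+n/p>\lambda$), and no such argument is given.

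The paper avoids this issue entirely in the upper bound: it introduces the averaging operator $P_R g=\bigl(\frac{1}{|R|}\int_R|g|\bigr)\chi_R$, observes the pointwise bound $P_R g\lesssim M_S g$, hence $\sup_{R\in\mathcal{R}}\|P_R\|$ on ${B \vec{\dot{K}}_{p^{\prime}, q^{\prime}}^{-\alpha, \lambda}}$ is controlled by $\|M_S\|$ (finite by Theorem \ref{boundedMS}), and then writes $\left\|\chi_R\right\|_{M \vec{\dot{K}}_{p, q}^{\alpha, \lambda}}$ through the duality as a supremum of $\left|\int_R g\right|$ over $\|g\|_{B \vec{\dot{K}}_{p^{\prime}, q^{\prime}}^{-\alpha, \lambda}}\le 1$, so that $\left|\int_R g\right|\left\|\chi_R\right\|_{B \vec{\dot{K}}_{p^{\prime}, q^{\prime}}^{-\alpha, \lambda}}=|R|\,\|P_R g\|_{B \vec{\dot{K}}_{p^{\prime}, q^{\prime}}^{-\alpha, \lambda}}\lesssim|R|$ uniformly in $R$. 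This treats every rectangle at once, with no geometric reduction. Your single-block estimate $\left\|\chi_R\right\|_{B \vec{\dot{K}}_{p^{\prime}, q^{\prime}}^{-\alpha, \lambda}}\le 2^{(l_1+l_2)\lambda}\left\|\chi_R\right\|_{\vec{\dot{K}}_{p^{\prime}, q^{\prime}}^{-\alpha}}$ is valid for general $R$ and could in principle be combined with Lemma \ref{FM.}, but only after you prove the missing Morrey--Herz bound for non-centered $R$ described above; as written, the proof establishes the lemma only for rectangles $R_{l_1,l_2}$ centered at the origin.
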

	\begin{proof}
		Theorem \ref{PHMdual} gives the inequality $|R| \lesssim\left\|\chi_R\right\|_{M \vec{\dot{K}}_{p, q}^{\alpha, \lambda}}\left\|\chi_R\right\|_{{B \vec{\dot{K}}_{p^{\prime}, q^{\prime}}^{-\alpha, \lambda}}}$, we only need to prove the opposite.
		For any $R \in \mathcal{R}$ and $g \in L_{ {loc }}\left(\mathbb{R}^n \times \mathbb{R}^m\right)$, we define
		$$
		P_R\left(g\right) (z)=\left(\frac{1}{|R|} \int_R\left|g\left(z^{\prime}\right)\right| d z^{\prime}\right) \chi_R(z) .
		$$
		It is easy to see $P_R g \lesssim M_S g,$ and 
		$
		\sup _{R \in \mathcal{R}}\left\|P_R\right\|_{{B \vec{\dot{K}}_{p^{\prime}, q^{\prime}}^{-\alpha, \lambda}}} \lesssim\left\|M_S\right\|_{{B \vec{\dot{K}}_{p^{\prime}, q^{\prime}}^{-\alpha, \lambda}}}.
		$
		
		By Theorem \ref{boundedMS},
		$$
		\begin{aligned}
			& \left\|\chi_R\right\|_{{M \vec{\dot{K}}_{p, q}^{\alpha, \lambda}}}\left\|\chi_R\right\|_{{B \vec{\dot{K}}_{p^{\prime}, q^{\prime}}^{-\alpha, \lambda}}
			} \\
			= & \sup \left\{\left|\int_R g(z) d z\right|\left\|\chi_R\right\|_{{B \vec{\dot{K}}_{p^{\prime}, q^{\prime}}^{-\alpha, \lambda}}
			}:\|g\|_{{B \vec{\dot{K}}_{p^{\prime}, q^{\prime}}^{-\alpha, \lambda}}
			} \leq 1\right\} \\
			\leq & \sup \left\{|R|\left\|P_R g\right\|_{{B \vec{\dot{K}}_{p^{\prime}, q^{\prime}}^{-\alpha, \lambda}}
			}: \|g\|_{{B \vec{\dot{K}}_{p^{\prime}, q^{\prime}}^{-\alpha, \lambda}}
			} \leq 1\right\} \\
			\lesssim & |R| .
		\end{aligned}
		$$
		This proves the assertion.
	\end{proof}
	\begin{theorem}\label{_end}
		Let $\alpha \in \mathbb{R}$, $1<p,q<\infty$, $0<\lambda<\min\{\alpha+n/p,\alpha+m/p\}$, and $$
	\max \left\{-\frac{n}{p},-\frac{m}{p}\right\}<\alpha<\min \left\{n\left(1-\frac{1}{p}\right), m\left(1-\frac{1}{p}\right)\right\}.$$ Then for every $f\in\operatorname{bmo}(\mathbb R^n\times\mathbb R^m)\setminus\mathcal C$ there exist constants $C_1,C_2>0$ such that, for any $\gamma>0$ and $R \in \mathcal{R}$,
		$$
		\left\|\chi_{\left\{z \in R:\left|f(z)-f_R\right|>\gamma\right\}}\right\|_{{M \vec{\dot{K}}_{p, q}^{\alpha, \lambda}}\left(\mathbb{R}^n \times \mathbb{R}^m\right)} \leq C_1 e^{-\frac{C_2 \gamma}{\|f\|_{\mathrm{bmo}}}}\left\|\chi_R\right\|_{{M \vec{\dot{K}}_{p, q}^{\alpha, \lambda}}\left(\mathbb{R}^n \times \mathbb{R}^m\right)} .
		$$
	\end{theorem}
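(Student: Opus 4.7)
The plan is to deduce the statement from the classical John--Nirenberg inequality \eqref{bmo_1} for little $\mathrm{bmo}$ via the Rubio de Francia extrapolation of Theorem~\ref{extra.ori.}. Set $E_\gamma := \{z \in R : |f(z) - f_R| > \gamma\}$. Inequality \eqref{bmo_1} immediately gives $|E_\gamma|/|R| \leq C_1 e^{-C_2\gamma/\|f\|_{\mathrm{bmo}}}$, and the task is to upgrade this Lebesgue-measure decay into a decay in the product Morrey--Herz norm.

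First, I would fix an exponent $p_0 \in [1,\min(p,q))$ close enough to $1$ so that the exponent restrictions appearing in Theorem~\ref{boundedMS} and Theorem~\ref{extra.ori.} remain valid for the auxiliary scale $B\vec{\dot{K}}_{(p/p_0)',(q/p_0)'}^{-p_0\alpha,p_0\lambda}$; this is possible because the standing hypotheses $\alpha + n/p > \lambda > 0$ and the range restriction on $\alpha$ hold with strict inequality. By \eqref{R_3}, every $v = \mathfrak{R}h$ with $\|h\|_{B\vec{\dot{K}}_{(p/p_0)',(q/p_0)'}^{-p_0\alpha,p_0\lambda}} \leq 1$ is then a strong $A_1^*$ weight whose characteristic constant is controlled uniformly by $2\|M_S\|_{B\vec{\dot{K}}_{(p/p_0)',(q/p_0)'}^{-p_0\alpha,p_0\lambda}}$.

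Next, I would invoke the reverse-H\"older inequality for strong $A_1^*$ weights, which provides constants $\delta, C > 0$ depending only on the uniform $A_1^*$-bound such that $v(E)/v(R) \leq C(|E|/|R|)^{\delta}$ for every rectangle $R \in \mathcal{R}$ and every measurable $E \subseteq R$. Applying this to $E = E_\gamma$ and combining with the Hart--Torres bound transfers the exponential decay from Lebesgue to $v$-measure:
\begin{equation*}
v(E_\gamma) \leq C\, C_1^{\delta}\, e^{-C_2\delta\gamma/\|f\|_{\mathrm{bmo}}}\, v(R), \qquad \text{uniformly in } v \in \{\mathfrak{R}h\}.
\end{equation*}
With $\eta := (CC_1^{\delta})^{1/p_0} e^{-C_2\delta\gamma/(p_0\|f\|_{\mathrm{bmo}})}$, this rewrites as $\int \chi_{E_\gamma}^{p_0} v \leq \int (\eta\chi_R)^{p_0} v$, uniformly for every admissible weight.

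Finally, I would apply Theorem~\ref{extra.ori.} to the singleton family $\mathfrak{F} := \{(\chi_{E_\gamma},\eta\chi_R)\}$, whose defining inequality is exactly the display above, to conclude $\|\chi_{E_\gamma}\|_{M\vec{\dot{K}}_{p,q}^{\alpha,\lambda}} \lesssim \eta\,\|\chi_R\|_{M\vec{\dot{K}}_{p,q}^{\alpha,\lambda}}$. Relabeling constants as $C_1' := C' (CC_1^{\delta})^{1/p_0}$ and $C_2' := C_2\delta/p_0$ recovers the claim. The main obstacle I anticipate is the reverse-H\"older step: the strong $A_\infty^*$ reverse-H\"older inequality for bi-parameter weights is classical in the product Calder\'on--Zygmund theory (e.g.\ within the Garc\'ia-Cuerva--Rubio de Francia framework referenced via \cite{Gar2011}), but it is not isolated as a lemma in the excerpt, so a short citation or a direct argument (iterating the one-parameter reverse H\"older in each variable) will be needed; once it is in hand, the remainder of the proof is a clean packaging of the Hart--Torres bound into the extrapolation scheme.
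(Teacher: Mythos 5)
Your proposal follows essentially the same route as the paper: both pass from the Hart--Torres inequality \eqref{bmo_1} to a weighted estimate $v\left(\{z\in R:|f(z)-f_R|>\gamma\}\right)\lesssim e^{-c\gamma/\|f\|_{\mathrm{bmo}}}\,v(R)$ holding uniformly over the Rubio de Francia weights $v=\mathfrak{R}h$, and then feed the pairs $\bigl(\chi_{\{z\in R:|f(z)-f_R|>\gamma\}},\,e^{-c\gamma/\|f\|_{\mathrm{bmo}}}\chi_R\bigr)$ into the extrapolation theorem (the paper does this with $p_0=1$). The only difference is that you make explicit, via the product $A_\infty^*$ comparison $v(E)/v(R)\lesssim\left(|E|/|R|\right)^{\delta}$, the transfer step that the paper merely asserts with ``we deduce,'' which is a justified filling-in of the same argument rather than a different method.
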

	\begin{proof}
	By the weighted John-Nirenberg inequality for $A_\infty^*$ weights and by \eqref{R_3}, there exist constants $D,E>0$, independent of the Rubio weight $v$, such that, for every $R\in\mathcal R$ and $\gamma>0$,
		$$
		v\left(\left\{z \in R:\left|f(z)-f_R\right|>\gamma\right\}\right) \leq D e^{-\frac{E\gamma}{\|f\|_{\mathrm{bmo}}}} v(R),
		$$
	which is equivalent to
		$$
		\int_{\mathbb{R}^n \times \mathbb{R}^m} \chi_{\left\{z \in R:\left|f(z)-f_R\right|>\gamma\right\}}(z) v(z) d z \leq D e^{-\frac{E \gamma}{\|f\|_{\mathrm{bmo}}}} \int_R v(z) d z .
		$$
		Set $p_0=1$ and
		$
		\mathfrak{F}=\left\{\left(\chi_{\left\{z \in R:\left|f(z)-f_R\right|>\gamma\right\}}, e^{-\frac{E \gamma}{\|f\|_{\mathrm{bmo}}}} \chi_R\right): R \in \mathcal{R}\right\}.
		$
		By Theorem \ref{extra.ori.},
		$$
		\left\|\chi_{\left\{z \in R:\left|f(z)-f_R\right|>\gamma\right\}}\right\|_{{M \vec{\dot{K}}_{p, q}^{\alpha, \lambda}}} \leq C_1 e^{-\frac{C_2 \gamma}{\|f\|_{\mathrm{bmo}}}}\left\|\chi_R\right\|_{{M \vec{\dot{K}}_{p, q}^{\alpha, \lambda}}} .
		$$
	\end{proof}
	The aforementioned result can be utilized to establish a new characterization of bmo on product Morrey-Herz spaces. Hence, we aim to demonstrate that the norm
	$$
	\|f\|_{\text{bmo}_{M \vec{\dot{K}}_{p, q}^{\alpha, \lambda}}\left(\mathbb{R}^n \times \mathbb{R}^m\right)} = \sup_{R \in \mathcal{R}} \frac{\left\| (f - f_R) \chi_R \right\|_{M \vec{\dot{K}}_{p, q}^{\alpha, \lambda}\left(\mathbb{R}^n \times \mathbb{R}^m\right)}}{\left\| \chi_R \right\|_{M \vec{\dot{K}}_{p, q}^{\alpha, \lambda}\left(\mathbb{R}^n \times \mathbb{R}^m\right)}}
	$$
	is equivalent to \( \|f\|_{\text{bmo}\left(\mathbb{R}^n \times \mathbb{R}^m\right)} \) for \( f \in \operatorname{bmo}(\mathbb{R}^n \times \mathbb{R}^m) \).
	We now characterize little bmo in terms of product Morrey-Herz spaces; for related results, see \cite{Ho2017_1, Ho2018_1}.
	\begin{theorem}
		Let $\alpha \in \mathbb{R}$, $1<p,q<\infty$, $0<\lambda<\min\{\alpha+n/p,\alpha+m/p\}$, and $$
		\max \left\{-\frac{n}{p},-\frac{m}{p}\right\}<\alpha<\min \left\{n\left(1-\frac{1}{p}\right), m\left(1-\frac{1}{p}\right)\right\}.
	$$ Then, on $\operatorname{bmo}(\mathbb R^n\times\mathbb R^m)$ modulo constants, the norms $\|\cdot\|_{\mathrm{bmo}_{M \vec{\dot{K}}_{p, q}^{\alpha, \lambda}}\left(\mathbb{R}^n \times \mathbb{R}^m\right)}$ and $\|\cdot\|_{\mathrm{bmo}\left(\mathbb{R}^n \times \mathbb{R}^m\right)}$ are equivalent.
	\end{theorem}
	\begin{proof}
		On the one hand, by Lemma \ref{chara.PMK}, $\chi_R \in {M \vec{\dot{K}}_{p, q}^{\alpha, \lambda}}$. Theorem \ref{PHMdual} implies that
		$$
		\int_R\left|f(z)-f_R\right| d z \leq\left\|\left(f(z)-f_R\right) \chi_R\right\|_{{M \vec{\dot{K}}_{p, q}^{\alpha, \lambda}}}\left\|\chi_R\right\|_{{B \vec{\dot{K}}_{p^{\prime}, q^{\prime}}^{-\alpha, \lambda}}} .
		$$
		By the characteristic-function estimate following from Lemma \ref{FM.} and the definitions of the product Morrey-Herz and product block-Herz norms, we get
		$$
		\int_R\left|f(z)-f_R\right| d z \lesssim|R| \frac{\left\|\left(f(z)-f_R\right) \chi_R\right\|_{{M \vec{\dot{K}}_{p, q}^{\alpha, \lambda}}}}{\left\|\chi_R\right\|_{{M \vec{\dot{K}}_{p, q}^{\alpha, \lambda}}}} .
		$$
		Then
		\begin{align}\label{nbmo_1}
			\|f\|_{\mathrm{bmo}} \lesssim\|f\|_{\mathrm{bmo}_{{M \vec{\dot{K}}_{p, q}^{\alpha, \lambda}}}} .
		\end{align}
		On the other hand, for any $j \in \mathbb{Z}$, the John-Nirenberg inequality on ${M \vec{\dot{K}}_{p, q}^{\alpha, \lambda}}$ gives
		\begin{align}\label{nbmo_2}
			\left\|\chi_{\left\{z \in R: 2^j<\left|f(z)-f_R\right| \leq 2^{j+1}\right\}}\right\|_{{M \vec{\dot{K}}_{p, q}^{\alpha, \lambda}}} \leq C_1 e^{-\frac{C_2 2^j}{\|f\|_{\mathrm{bmo}}}}\left\|\chi_R\right\|_{{M \vec{\dot{K}}_{p, q}^{\alpha, \lambda}}} .
		\end{align}
		For any $f \in \operatorname{bmo}\left(\mathbb{R}^n \times \mathbb{R}^m\right)$, there exists some $k \in \mathbb{Z}$ satisfying
		\begin{align}\label{nbmo_3}
			2^k<\|f\|_{\mathrm{bmo}} \leq 2^{k+1} .
		\end{align}
		Therefore, for all $R \in \mathcal{R}$,
		\begin{align}\label{nbmo_4}
			\left\|\left(f(z)-f_R\right) \chi_R\right\|_{{M \vec{\dot{K}}_{p, q}^{\alpha, \lambda}}} \leq 2^k\left\|\chi_R\right\|_{{M \vec{\dot{K}}_{p, q}^{\alpha, \lambda}}}+\sum_{j=k}^{\infty} 2^{j+1}\left\|\chi_{\left\{z \in R: 2^j<\left|f(z)-f_R\right| \leq 2^{j+1}\right\}}\right\|_{{M \vec{\dot{K}}_{p, q}^{\alpha, \lambda}}} .
		\end{align}
		Combining \eqref{nbmo_2} and \eqref{nbmo_3}, 
		$$
		\left\|\left(f(z)-f_R\right) \chi_R\right\|_{{M \vec{\dot{K}}_{p, q}^{\alpha, \lambda}}} \leq 2^k\left\|\chi_R\right\|_{{M \vec{\dot{K}}_{p, q}^{\alpha, \lambda}}}+C \sum_{j=k}^{\infty} 2^{j+1} e^{-\frac{C_2 2^j}{\|f\|_{\mathrm{bmo}}}}\left\|\chi_R\right\|_{{M \vec{\dot{K}}_{p, q}^{\alpha, \lambda}}} .
		$$
		A simple computation gives
		$$
		\sum_{j=k}^{\infty} 2^{j+1} e^{-\frac{C_2 2^j}{\|f\|_{\mathrm{bmo}}}} \lesssim \int_0^{\infty} \exp \left(-\frac{C_2 s}{\|f\|_{\mathrm{bmo}}}\right) d s \lesssim\|f\|_{\mathrm{bmo}}.
		$$
		By \eqref{nbmo_3}, we have
		\begin{align}\label{nbmo_5}
			\frac{\left\|\left(f(z)-f_R\right) \chi_R\right\|_{{M \vec{\dot{K}}_{p, q}^{\alpha, \lambda}}}}{\left\|\chi_R\right\|_{{M \vec{\dot{K}}_{p, q}^{\alpha, \lambda}}}} \lesssim 2^k+\|f\|_{\mathrm{bmo}} \lesssim\|f\|_{\mathrm{bmo}} .
		\end{align}
		The proof is completed from \eqref{nbmo_1} and \eqref{nbmo_5}.
	\end{proof}
	\subsection{Bi-parameter Calderón-Zygmund operators and their commutators on product Morrey-Herz spaces}~~

	Extrapolation theory in product Lebesgue spaces, as explored in \cite{Ho2016_1, Ho2017_3, Koki.2018}, has been shown to help establish boundedness for multiparameter operators. This subsection aims to employ Theorem \ref{extra.T.} to extend this theory to encompass bi-parameter operators and their commutators within the framework of product Morrey-Herz spaces.
	
	We next recall the bi-parameter Calderón-Zygmund operator introduced by Fefferman and Stein in \cite{Fe.1982}. The operator $T_K$ is defined by $T_K f(z)=(f*K)(z)$, $z\in\mathbb R^n\times\mathbb R^m$, where the kernel $K$ satisfies the following conditions:
	\begin{itemize}
		\item[(a)] For each $y \in \mathbb{R}^m, 0<\alpha<\beta$,
		$$
		\int_{\alpha<|x|<\beta} K(x, y) d x=0 ;
		$$
		and for each $x \in \mathbb{R}^n, 0<\alpha<\beta$,
		$$
		\int_{\alpha<|y|<\beta} K(x, y) d y=0 .
		$$
		\item[(b)] For $x \in \mathbb{R}^n \backslash\{0\}$ and $y \in \mathbb{R}^m \backslash\{0\}$,
		$$
		|K(x, y)| \lesssim \frac{1}{|x|^n|y|^m}
		$$
		\item[(c)] There exists an $\eta>0$ such that, for any $h \in \mathbb{R}^n$ with $|x|>2|h|$ and $y \in \mathbb{R}^m \backslash\{0\}$,
		$$
		|K(x+h, y)-K(x, y)| \lesssim \frac{1}{|y|^m}\left(\frac{|h|}{|x|}\right)^\eta \frac{1}{|x|^n},
		$$
		and for any $k \in \mathbb{R}^m$ with $|y|>2|k|$ and $x \in \mathbb{R}^n \backslash\{0\}$,
		$$
		|K(x, y+k)-K(x, y)| \lesssim \frac{1}{|x|^n}\left(\frac{|k|}{|y|}\right)^\eta \frac{1}{|y|^m}
		$$
		\item[(d)] For any $h \in \mathbb{R}^n, k \in \mathbb{R}^m$ with $|x|>2|h|$ and $|y|>2|k|$, write $\Delta_h^1 K(x, y)=K(x+h, y)-K(x, y)$ and $\Delta_k^2 K(x, y)=$ $K(x, y+k)-K(x, y), K$ satisfies
		$$
		\left|\Delta_k^2 \Delta_h^1 K(x, y)\right| \lesssim \frac{1}{|x|^n|y|^m}\left(\frac{|h|}{|x|} \frac{|k|}{|y|}\right)^\eta,
		$$
		where $\eta>0$ is the same as in condition (c).
	\end{itemize}
	
	By \cite[Theorem 3]{Fe.1982}, if $1<p_0<\infty$ and $v \in A_1^*\left(\mathbb{R}^n \times \mathbb{R}^m\right) \subseteq A_{p_0}^*\left(\mathbb{R}^n \times \mathbb{R}^m\right)$, then $T_K$ is bounded on $L^{p_0}(v)$ with constants depending only on the relevant weight characteristics. Therefore, Theorem~\ref{extra.T.} yields the following result.
	\begin{theorem}\label{end_1}
		Let $\alpha \in \mathbb{R}$, $1<{p_0}<p,q<\infty$, $0<\lambda<\min\{\alpha+n/p,\alpha+m/p\}$, and $$\max \left\{-\frac{n}{p},-\frac{m}{p}\right\}<\alpha<\min \left\{n\left(\frac{1}{p_0}-\frac{1}{p}\right), m\left(\frac{1}{p_0}-\frac{1}{p}\right)\right\}.$$ 
		Then $T_K$ can be extended to a bounded operator on ${M \vec{\dot{K}}_{p, q}^{\alpha, \lambda}}\left( {{\R^n} \times {\R^m}} \right)$. Furthermore, we have
		$$
		\left\|\left(\sum_{i \in \mathbb{N}}\left|T_K f_i\right|^{p_0}\right)^{\frac{1}{p_0}}\right\|_{M \vec{\dot{K}}_{p, q}^{\alpha, \lambda}\left(\mathbb{R}^n \times \mathbb{R}^m\right)} \lesssim\left\|\left(\sum_{i \in \mathbb{N}}\left|f_i\right|^{p_0}\right)^{\frac{1}{p_0}}\right\|_{M \vec{\dot{K}}_{p, q}^{\alpha, \lambda}\left(\mathbb{R}^n \times \mathbb{R}^m\right)}
		$$
	\end{theorem}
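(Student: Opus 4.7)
The plan is to reduce everything to Theorem~\ref{extra.T.}, the extrapolation theorem on product Morrey-Herz spaces established in Section~\ref{sec4}. The task splits into two subtasks: first, choose an exponent $p_0 \in (1, \min\{p,q\})$ for which Theorem~\ref{boundedMS} applies to the conjugate block-Herz space ${B \vec{\dot{K}}_{(p/p_0)', (q/p_0)'}^{-p_0\alpha, p_0\lambda}}$ (so that the Rubio de Francia iteration $\mathfrak{R}$ is actually well defined); second, verify that for every $h$ with $\|h\|_{B \vec{\dot{K}}_{(p/p_0)', (q/p_0)'}^{-p_0\alpha, p_0\lambda}} \le 1$, the operator $T_K$ is bounded on $L^{p_0}(\mathbb{R}^n \times \mathbb{R}^m, \mathfrak{R} h)$.

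For the second task, the key observation is the pointwise $A_1^*$-bound \eqref{R_3}, namely $[\mathfrak{R}h]_{A_1^*} \le 2\|M_S\|_{B \vec{\dot{K}}_{(p/p_0)', (q/p_0)'}^{-p_0\alpha, p_0\lambda}}$, which is uniform in $h$. Since $A_1^* \subseteq A_{p_0}^*$ for every $p_0 > 1$, each iterate $\mathfrak{R}h$ is a strong $A_{p_0}^*$ weight with uniformly controlled characteristic. The classical Fefferman-Stein theorem \cite[Theorem~3]{Stein1982} on the kernel conditions (a)--(d) then yields the weighted bound $\|T_K f\|_{L^{p_0}(\mathfrak{R} h)} \lesssim \|f\|_{L^{p_0}(\mathfrak{R} h)}$, which is precisely the hypothesis required by Theorem~\ref{extra.T.}.

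For the first task, a direct substitution into the hypotheses of Theorem~\ref{boundedMS} shows that the Morrey-Herz condition $-(-p_0\alpha) + n/((p/p_0)')' > p_0\lambda > 0$ collapses to the standing assumption $\alpha + n/p > \lambda > 0$, while the admissibility range becomes $\max\{-n/p,-m/p\} < \alpha < (1/p_0 - 1/p)\min\{n,m\}$. Because the hypothesis provides the strict inequality $\alpha < (1 - 1/p)\min\{n,m\}$, one may pick $p_0 \in (1, \min\{p,q\})$ sufficiently close to $1$ so that $1/p_0 - 1/p$ still exceeds $\alpha/\min\{n,m\}$. I expect this parameter bookkeeping, ensuring simultaneous compatibility with Theorem~\ref{boundedMS} (on the dual side) and with the range restrictions in Theorem~\ref{extra.T.}, to be the only real obstacle; everything else is a direct quotation.

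Once such $p_0$ is fixed, the norm bound $\|T_K f\|_{M \vec{\dot{K}}_{p,q}^{\alpha,\lambda}} \lesssim \|f\|_{M \vec{\dot{K}}_{p,q}^{\alpha,\lambda}}$ follows by invoking Theorem~\ref{extra.T.} with the pair family $\mathfrak{F} = \{(|T_K f|,|f|) : f \in {M \vec{\dot{K}}_{p,q}^{\alpha,\lambda}}\}$, and the vector-valued inequality is an immediate consequence of \eqref{extra.T_3} applied to the family $\{(|T_K f_i|,|f_i|)\}_{i \in \mathbb{N}}$.
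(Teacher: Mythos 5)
Your proposal is correct and follows essentially the same route as the paper: quote the Fefferman--Stein weighted bound, namely that $T_K$ is bounded on $L^{p_0}(\mathbb{R}^n\times\mathbb{R}^m,v)$ for every $v\in A_1^*\subseteq A_{p_0}^*$, apply it to $v=\mathfrak{R}h$ using \eqref{R_3}, and invoke Theorem \ref{extra.T.} (with \eqref{extra.T_3} giving the vector-valued inequality). Your explicit selection of $p_0$ close to $1$ so that $\alpha<(1/p_0-1/p)\min\{n,m\}$, which guarantees via Theorem \ref{boundedMS} that $M_S$ is bounded on ${B \vec{\dot{K}}_{(p/p_0)^{\prime}, (q/p_0)^{\prime}}^{-p_0\alpha, p_0\lambda}}$ and hence that $\mathfrak{R}$ is well defined, is a piece of bookkeeping the paper leaves implicit (it only says ``let $1<p_0<\infty$''), so this is a precision of, not a departure from, the paper's argument.
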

	The mapping property of the commutator for the bi-parameter singular integral operator $T_K$ provides another application. The seminal work of Coifman, Rochberg, and Weiss \cite{Coifman1976} showed that commutators of Calderón-Zygmund singular integral operators are bounded on Lebesgue spaces when the symbol $b$ belongs to $\operatorname{BMO}\left(\mathbb{R}^N\right)$. Since then, numerous characterizations of $\operatorname{BMO}\left(\mathbb{R}^N\right)$ have been obtained through the boundedness of commutators on various function spaces, including Morrey-type spaces \cite{Tao2020,Wang2020}, variable spaces \cite{Tan2017,Wang2019}, and Lorentz spaces \cite{Dao2021}.
	
	In recent work, Duong et al. characterized \(\operatorname{BMO}\) through the boundedness of commutators associated with certain bi-parameter integral operators \cite{Duong2021,Duong2018}. Building on this idea, we obtain a characterization of little \(\operatorname{bmo}\) via the boundedness of the commutator of the bi-parameter singular integral operator \(T_K\) on product Morrey-Herz spaces.
	
	Recall that the commutator \([b,T_K]\), associated with a bi-parameter Calderón-Zygmund singular integral operator \(T_K\) and a locally integrable function \(b\), is defined by \([b,T_K](f)(z)=b(z)T_K(f)(z)-T_K(bf)(z)\). According to \cite[Theorem 3.17]{Benyi2020}, if \(b\in \operatorname{bmo}(\mathbb R^n\times\mathbb R^m)\), then \([b,T_K]\) is bounded on \(L^{p_0}(\mathbb R^n\times\mathbb R^m,v)\) for each \(1<p_0<\infty\) and every weight \(v\in A_1^*(\mathbb R^n\times\mathbb R^m)\subseteq A_{p_0}^*(\mathbb R^n\times\mathbb R^m)\). We obtain the following two results.
	\begin{theorem}
		Let $\alpha \in \mathbb{R}$, $1<p_0<p,q<\infty$, $0<\lambda<\min\{\alpha+n/p,\alpha+m/p\}$, and $$\max \left\{-\frac{n}{p},-\frac{m}{p}\right\}<\alpha<\min \left\{n\left(\frac{1}{p_0}-\frac{1}{p}\right), m\left(\frac{1}{p_0}-\frac{1}{p}\right)\right\}.$$
		If $b \in \operatorname{bmo}\left(\mathbb{R}^n \times \mathbb{R}^m\right)$, then $\left[b, T_K\right]$ is bounded on ${M \vec{\dot{K}}_{p, q}^{\alpha, \lambda}}\left( {{\R^n} \times {\R^m}} \right)$.
	\end{theorem}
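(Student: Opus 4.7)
The plan is to deduce the theorem as a direct consequence of the extrapolation result in Theorem \ref{extra.T.}, exactly in the spirit of the previous applications (Theorem \ref{F.S.ineq.} and Theorem \ref{end_1}). The key inputs are: (i) for any $b \in \operatorname{bmo}(\mathbb{R}^n \times \mathbb{R}^m)$ and any $1<p_0<\infty$, the commutator $[b, T_K]$ is bounded on $L^{p_0}(\mathbb{R}^n \times \mathbb{R}^m, v)$ for every weight $v \in A_{p_0}^*$, a fact quoted from \cite[Theorem 3.17]{Benyi2020}; (ii) for the auxiliary operator $\mathfrak{R}$ built from the strong maximal operator in Section \ref{sec4}, any element of the form $\mathfrak{R} h$ lies in $A_1^*$ (inequality \eqref{R_3}), and hence in $A_{p_0}^*$ for all $p_0 > 1$.

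First I would fix a choice of $p_0$ with $1 < p_0 < \min\{p, q\}$. This is possible because both $p$ and $q$ are strictly greater than $1$ by hypothesis, and this choice ensures that $p/p_0 > 1$ and $q/p_0 > 1$, so the dual block-Herz space $B \vec{\dot{K}}_{(p/p_0)',(q/p_0)'}^{-p_0\alpha, p_0\lambda}$ is well defined and the operator $\mathfrak{R}$ is well defined on it (the admissibility conditions on $\alpha$ and $\lambda$ in Theorem \ref{extra.T.} are inherited verbatim from those in our hypothesis). Next, for any $h \in B \vec{\dot{K}}_{(p/p_0)',(q/p_0)'}^{-p_0\alpha, p_0\lambda}$ with $\|h\|_{B \vec{\dot{K}}_{(p/p_0)',(q/p_0)'}^{-p_0\alpha, p_0\lambda}} \le 1$, property \eqref{R_3} tells us that $\mathfrak{R} h \in A_1^* \subseteq A_{p_0}^*$, with an $A_{p_0}^*$-characteristic bounded by a constant depending only on the ambient parameters. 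Applying input (i) with $v = \mathfrak{R} h$ yields
\begin{equation*}
\int_{\mathbb{R}^n \times \mathbb{R}^m} |[b, T_K]f(z)|^{p_0}\, \mathfrak{R} h(z)\, dz \;\lesssim\; \int_{\mathbb{R}^n \times \mathbb{R}^m} |f(z)|^{p_0}\, \mathfrak{R} h(z)\, dz,
\end{equation*}
which is precisely the weighted $L^{p_0}$ estimate required to trigger Theorem \ref{extra.T.} with the pair class $\mathfrak{F} = \{(|[b, T_K]f|, |f|) : f \in M\vec{\dot{K}}_{p,q}^{\alpha,\lambda}\}$.

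Finally I would invoke Theorem \ref{extra.T.} to conclude $\|[b, T_K] f\|_{M \vec{\dot{K}}_{p,q}^{\alpha,\lambda}} \lesssim \|f\|_{M \vec{\dot{K}}_{p,q}^{\alpha,\lambda}}$, completing the proof; the same machinery also delivers the Fefferman-Stein vector-valued version for $[b, T_K]$ if desired. The only mildly delicate points are of bookkeeping rather than substance: one must verify that the parameter constraint $\alpha + n/p > \lambda > 0$ together with the two-sided bound on $\alpha$ transfers correctly to the data $(p/p_0, q/p_0, p_0\alpha, p_0\lambda)$ used to define the block-Herz space on which $M_S$ (and therefore $\mathfrak{R}$) acts; this is exactly why the hypotheses are stated in the form given. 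There is no real obstacle: all the hard analysis (the duality, the boundedness of $M_S$ on product block-Herz spaces, the construction of the Rubio de Francia iteration $\mathfrak{R}$, and the weighted bound for $[b, T_K]$) has already been done.
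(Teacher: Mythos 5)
Your proposal is correct and follows exactly the route the paper intends: the paper gives no separate argument for this theorem, deriving it directly from Theorem \ref{extra.T.} together with the weighted bound $\|[b,T_K]f\|_{L^{p_0}(v)}\lesssim\|f\|_{L^{p_0}(v)}$ for $v\in A_1^*\subseteq A_{p_0}^*$ quoted from \cite[Theorem 3.17]{Benyi2020}, with $\mathfrak{R}h\in A_1^*$ supplied by \eqref{R_3}. Your additional bookkeeping (choosing $1<p_0<\min\{p,q\}$ and checking the parameter constraints transfer) matches the paper's implicit setup.
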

	
	\begin{theorem}
		Let $\alpha \in \mathbb{R}$, $1<p,q<\infty$, $0<\lambda<\min\{\alpha+n/p,\alpha+m/p\}$, and $$
		\max \left\{-\frac{n}{p},-\frac{m}{p}\right\}<\alpha<\min \left\{n\left(1-\frac{1}{p}\right), m\left(1-\frac{1}{p}\right)\right\}.$$
	Suppose that $b\in L_{\mathrm{loc}}(\mathbb R^n\times\mathbb R^m)$ and $K(x,y)=K_1(x)K_2(y)$, where $K_1$ and $K_2$ are homogeneous of degrees $-n$ and $-m$, respectively, and where $1/K_1$ and $1/K_2$ admit absolutely convergent Fourier series on some balls $B_1\subseteq\mathbb R^n$ and $B_2\subseteq\mathbb R^m$, respectively.
		If $\left[b, T_K\right]$ is a bounded operator on ${M \vec{\dot{K}}_{p, q}^{\alpha, \lambda}}\left(\mathbb{R}^n \times \mathbb{R}^m\right)$, then $b \in \operatorname{bmo}\left(\mathbb{R}^n \times \mathbb{R}^m\right)$.
	\end{theorem}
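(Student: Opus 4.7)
The strategy is to carry out a bi-parameter Uchiyama--Janson argument (in the spirit of Duong et al.\ \cite{Duong2018,Duong2021}) and to combine it with the duality between product Morrey--Herz and product block--Herz spaces from Theorem \ref{PHMdual}. Fix an arbitrary rectangle $R=Q(x_0,r_1)\times Q(y_0,r_2)\in\mathcal{R}$. Denoting by $x_0^{\ast}$ and $y_0^{\ast}$ the centres of $B_1$ and $B_2$, I would choose a ``twin'' rectangle $\widetilde R=Q(x_0-r_1 x_0^{\ast},r_1)\times Q(y_0-r_2 y_0^{\ast},r_2)$ so that for every $z=(z_1,z_2)\in R$ and $w=(w_1,w_2)\in\widetilde R$ one has $(z_1-w_1)/r_1\in B_1$ and $(z_2-w_2)/r_2\in B_2$. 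By the homogeneity of $K_1$ (degree $-n$) and $K_2$ (degree $-m$) together with the absolutely convergent Fourier expansions of $1/K_1,1/K_2$,
\begin{equation*}
\frac{1}{K(z-w)}=r_1^{n}r_2^{m}\sum_{\mu,\nu} a_\mu c_\nu \, e^{i v_\mu\cdot (z_1-w_1)/r_1}\, e^{i w_\nu\cdot (z_2-w_2)/r_2},\qquad \sum_{\mu,\nu}|a_\mu c_\nu|<\infty.
\end{equation*}
If the radii of $B_1,B_2$ are too small for the whole of $R\times\widetilde R$ to lie in the scaled domain of convergence, one first subdivides $R$ into finitely many comparable sub-rectangles, runs the argument below on each, and assembles the estimates.

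Next, starting from $b(z)-b_{\widetilde R}=|\widetilde R|^{-1}\int_{\widetilde R}(b(z)-b(w))\,dw$, I would insert the trivial identity $K(z-w)/K(z-w)=1$, substitute the Fourier expansion above, and pair with $h(z)=\mathrm{sgn}(b(z)-b_{\widetilde R})\chi_R(z)$. The inner integrals are then precisely $[b,T_K]$ acting on the modulated characteristic functions $g_{\mu,\nu}(w)=e^{-i v_\mu\cdot w_1/r_1}e^{-i w_\nu\cdot w_2/r_2}\chi_{\widetilde R}(w)$; this is where the product structure $K=K_1\otimes K_2$ is used essentially. Since $r_1^n r_2^m=|R|=|\widetilde R|$, the outcome is
\begin{equation*}
\int_R|b(z)-b_{\widetilde R}|\,dz \;\le\; \sum_{\mu,\nu}|a_\mu c_\nu|\,\Bigl|\int_R h_{\mu,\nu}(z)\,[b,T_K](g_{\mu,\nu})(z)\,dz\Bigr|,
\end{equation*}
with $|h_{\mu,\nu}|\le\chi_R$ and $|g_{\mu,\nu}|=\chi_{\widetilde R}$. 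The duality pairing of Theorem \ref{PHMdual}, combined with the assumed boundedness of $[b,T_K]$ on ${M\vec{\dot K}_{p,q}^{\alpha,\lambda}}$, then dominates the right-hand side by
\begin{equation*}
\Bigl(\sum_{\mu,\nu}|a_\mu c_\nu|\Bigr)\,\bigl\|[b,T_K]\bigr\|_{\mathrm{op}}\,\|\chi_R\|_{B\vec{\dot K}_{p',q'}^{-\alpha,\lambda}}\,\|\chi_{\widetilde R}\|_{M\vec{\dot K}_{p,q}^{\alpha,\lambda}}.
\end{equation*}

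The hardest step will be the final product of norms: since the product Morrey--Herz norm is not translation-invariant, Lemma \ref{lembmo} cannot be applied directly to the pair $(R,\widetilde R)$. My plan is to enclose both $R$ and $\widetilde R$ in a common rectangle $R^{\ast\ast}\in\mathcal{R}$ with $|R^{\ast\ast}|\approx|R|$; using the lattice property of $B\vec{\dot K}_{p',q'}^{-\alpha,\lambda}$ (Proposition \ref{pre.extra.}) and the analogous monotonicity of the Morrey--Herz norm (which is immediate from its definition), one then obtains
\begin{equation*}
\|\chi_R\|_{B\vec{\dot K}_{p',q'}^{-\alpha,\lambda}}\,\|\chi_{\widetilde R}\|_{M\vec{\dot K}_{p,q}^{\alpha,\lambda}}\;\le\;\|\chi_{R^{\ast\ast}}\|_{B\vec{\dot K}_{p',q'}^{-\alpha,\lambda}}\,\|\chi_{R^{\ast\ast}}\|_{M\vec{\dot K}_{p,q}^{\alpha,\lambda}}\;\lesssim\;|R^{\ast\ast}|\;\approx\;|R|,
\end{equation*}
the middle inequality being Lemma \ref{lembmo} applied to $R^{\ast\ast}$. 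Dividing by $|R|$ yields $|R|^{-1}\int_R|b-b_{\widetilde R}|\,dz\lesssim\|[b,T_K]\|_{\mathrm{op}}$; the triangle inequality $|R|^{-1}\int_R|b-b_R|\,dz\le 2\,|R|^{-1}\int_R|b-b_{\widetilde R}|\,dz$ and the supremum over $R\in\mathcal{R}$ then give $\|b\|_{\mathrm{bmo}}\lesssim\|[b,T_K]\|_{\mathrm{op}}<\infty$, completing the proof.
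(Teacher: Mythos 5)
Your main line is essentially the paper's own proof: the bi-parameter Janson--Paluszy\'nski expansion of $1/K_1$ and $1/K_2$, the choice of a translated twin rectangle, the identification of the inner integrals with $[b,T_K]$ acting on modulated characteristic functions, the duality/H\"older step via Theorem \ref{PHMdual}, the enclosure of $R$ and its twin in a common rectangle $R^{\ast\ast}$ of comparable measure, and the conclusion via the lattice property (Proposition \ref{pre.extra.}) and Lemma \ref{lembmo} are exactly the steps the paper carries out.

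The one genuine defect is your treatment of small balls $B_1,B_2$. With the twin offset $r_1x_0^{\ast}$, $r_2y_0^{\ast}$ ($x_0^{\ast},y_0^{\ast}$ the centres of $B_1,B_2$), the containment $(z_1-w_1)/r_1\in B_1$ for all $z\in R$, $w\in\widetilde R$ forces $B_1$ to have radius at least $\sqrt n$ (and $B_2$ at least $\sqrt m$), since $(z_1-w_1)/r_1-x_0^{\ast}$ ranges over the cube of side $2$, whose circumradius is $\sqrt n$. Your proposed remedy, subdividing $R$ into comparable sub-rectangles, does not help: the quantity $(z_1-w_1)/r_1'-x_0^{\ast}$ for a sub-cube of side $r_1'$ and its twin ranges over the same cube of side $2$, i.e.\ the relative spread is scale-invariant, so shrinking $R$ gains nothing; and even if it did, recovering the oscillation of $b$ over $R$ from oscillations of the pieces against averages over their distinct twins is not automatic. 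The correct repair is another use of homogeneity, which is precisely what the paper does: write the radius of $B_1$ as $\delta_1\sqrt n$ and its centre as $a_0$, so that $1/K_1(x)=\delta_1^{-n}\sum_k c_k e^{i\delta_1\mu_k\cdot x}$ for $|x-a_0/\delta_1|<\sqrt n$, and offset the twin cube by $r_1\,a_0/\delta_1$ rather than $r_1x_0^{\ast}$ (similarly in the $y$-variable with $\delta_2$, $b_0$). This change costs nothing downstream: the twin is still at distance $O(r_1)$, $O(r_2)$, so the enclosing rectangle $R^{\ast\ast}$ still satisfies $|R^{\ast\ast}|\approx|R|$ with constants depending only on $B_1,B_2$, and the rest of your argument (and the paper's) goes through verbatim.
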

	\begin{proof}
		The proof is similar to those of Janson \cite{Janson1987} and Paluszyński \cite{Palu1995}.
		Let $B\left(a_0, \delta_1 \sqrt{n}\right)$ be the ball centered at $a_0$ with radius $\delta_1 \sqrt{n}$, such that $1 / K_1(x)$ can be represented as an absolutely convergent Fourier series
		$$
		\frac{1}{K_1(x)}=\sum c_k e^{i \mu_k \cdot x}
		$$
		where $c_k\in\mathbb{C}$ and $\mu_k\in\mathbb{R}^n$ are given constants and $\sum\left|c_k\right|<\infty$.
		
		Set $x_1=a_0 / \delta_1$. Notice that for all $x$ such that $\left|x-x_1\right|<\sqrt{n}$, we have
		$$
		\frac{1}{K_1(x)}=\frac{\delta_1^{-n}}{K_1\left(\delta_1 x\right)}=\delta_1^{-n} \sum c_k e^{i \delta_1 \mu_k \cdot x} .
		$$
		For given cubes $Q_1=Q_1\left(x_0, r\right)$ and $Q_1^{\prime}=Q_1\left(x_0-r x_1, r\right)$, if $x \in Q_1$ and $x^{\prime} \in Q_1^{\prime}$, we have
		$$
		\left|\frac{x-x^{\prime}}{r}-x_1\right| \leq\left|\frac{x-x_0}{r}\right|+\left|\frac{x^{\prime}-\left(x_0-r x_1\right)}{r}\right|<\sqrt{n} .
		$$
		For $K_2(y)$, the argument is similar to that for $K_1(x)$. Let $B\left(b_0, \delta_2 \sqrt{m}\right)$ be the ball centered at $b_0$ with radius $\delta_2 \sqrt{m}$, such that $1 / K_2(y)$ can be represented as an absolutely convergent Fourier series
		$$
		\frac{1}{K_2(y)}=\sum d_l e^{i v_l \cdot y}
		$$
		where $d_l\in\mathbb{C}$ and $v_l\in\mathbb{R}^m$ are given constants and $\sum\left|d_l\right|<\infty$.
		Set $y_1=b_0 / \delta_2$. Notice that for all $y$ such that $\left|y-y_1\right|<\sqrt{m}$, we have
		$$
		\frac{1}{K_2(y)}=\frac{\delta_2^{-m}}{K_2\left(\delta_2 y\right)}=\delta_2^{-m} \sum d_l e^{i \delta_2 v_l \cdot y}
		$$
		For given cubes $Q_2=Q_2\left(y_0, s\right)$ and $Q_2^{\prime}=Q_2\left(y_0-s y_1, s\right)$, if $y \in Q_2$ and $y^{\prime} \in Q_2^{\prime}$, then
		$$
		\left|\frac{y-y^{\prime}}{s}-y_1\right| \leq\left|\frac{y-y_0}{s}\right|+\left|\frac{y^{\prime}-\left(y_0-s y_1\right)}{s}\right|<\sqrt{m} .
		$$
		For any $R=Q_1\left(x_0, r\right) \times Q_2\left(y_0, s\right) \in \mathcal{R}$, let $R^{\prime}=Q_1^{\prime} \times Q_2^{\prime}$ and $t(x, y)=\operatorname{sgn}(b(x, y)-$ $\left.b_{R^{\prime}}\right)$. Then we have the following estimates:
		$$
		\begin{aligned} & \frac{1}{|R|} \int_R\left|b(x, y)-b_R\right| d x d y \\ \lesssim & \frac{1}{|R|} \int_R\left|b(x, y)-b_{R^{\prime}}\right| d x d y \\ = & \frac{1}{|R|} \int_R\left(b(x, y)-b_{R^{\prime}}\right) t(x, y) d x d y \\ = & \frac{1}{|R|} \frac{1}{\left|R^{\prime}\right|} \int_R \int_{R^{\prime}}\left(b(x, y)-b\left(x^{\prime}, y^{\prime}\right)\right) d x^{\prime} d y^{\prime} t(x, y) d x d y \\ = & \frac{1}{|R|^2} \int_R \int_{R^{\prime}} t(x, y)\left(b(x, y)-b\left(x^{\prime}, y^{\prime}\right)\right) \frac{r^n K_1\left(x-x^{\prime}\right)}{K_1\left(\frac{x-x^{\prime}}{r}\right)} \frac{s^m K_2\left(y-y^{\prime}\right)}{K_2\left(\frac{y-y^{\prime}}{s}\right)} d x^{\prime} d y^{\prime} d x d y \\ \approx & \frac{1}{|R|} \int_{\mathbb{R}^n \times \mathbb{R}^m} \int_{\mathbb{R}^n \times \mathbb{R}^m} t(x, y)\left(b(x, y)-b\left(x^{\prime}, y^{\prime}\right)\right) K\left(x-x^{\prime}, y-y^{\prime}\right) \\ \times & \sum_k c_k e^{i \delta_1 \mu_k \cdot \frac{x-x^{\prime}}{r}} \sum_l d_l e^{i \delta_2 v_l \cdot \frac{y-y^{\prime}}{s}} \chi_{Q_1}(x) \chi_{Q_2}(y) \chi_{Q_1^{\prime}}\left(x^{\prime}\right) \chi_{Q_2^{\prime}}\left(y^{\prime}\right) d x^{\prime} d y^{\prime} d x d y .\end{aligned}
		$$
		Setting
		$$
		\begin{aligned}
			& g_{k, l}(x, y)=e^{i \frac{\delta_1}{r} \mu_k \cdot x} e^{i \frac{\delta_2}{s} v_l \cdot y} t(x, y) \chi_{Q_1}(x) \chi_{Q_2}(y), \\
			& h_{k, l}\left(x^{\prime}, y^{\prime}\right)=e^{-i \frac{\delta_1}{r} \mu_k \cdot x^{\prime}} e^{-i \frac{\delta_2}{s} v_l \cdot y^{\prime}} \chi_{Q_1^{\prime}}\left(x^{\prime}\right) \chi_{Q_2^{\prime}}\left(y^{\prime}\right),
		\end{aligned}
		$$
		one can further obtain
		$$
		\frac{1}{|R|} \int_R\left|b(x, y)-b_R\right| d x d y
		$$
		$$
		\begin{aligned}
			& \approx \frac{1}{|R|} \sum_k \sum_l c_k d_l \int_{\mathbb{R}^n \times \mathbb{R}^m} \int_{\mathbb{R}^n \times \mathbb{R}^m}\left(b(x, y)-b\left(x^{\prime}, y^{\prime}\right)\right) \\
			& \times K\left(x-x^{\prime}, y-y^{\prime}\right) g_{k, l}(x, y) h_{k, l}\left(x^{\prime}, y^{\prime}\right) d x^{\prime} d y^{\prime} d x d y \\
			& =\frac{1}{|R|} \sum_k \sum_l c_k d_l \int_{\mathbb{R}^n \times \mathbb{R}^m}\left[b, T_K\right]\left(h_{k, l}\right)(x, y) g_{k, l}(x, y) d x d y .
		\end{aligned}
		$$
		Using the boundedness of $\left[b, T_K\right]$ on ${M \vec{\dot{K}}_{p, q}^{\alpha, \lambda}}\left(\mathbb{R}^n \times \mathbb{R}^m\right)$ and Theorem \ref{PHMdual}, we obtain
		$$
		\begin{aligned}
			& \frac{1}{|R|} \int_R\left|b(x, y)-b_R\right| d x d y \\
			\lesssim & \frac{1}{|R|} \sum_k \sum_l\left|c_k\right|\left|d_l\right|\left\|\left[b, T_K\right]\left(h_{k, l}\right)\right\|_{{M \vec{\dot{K}}_{p, q}^{\alpha, \lambda}}\left(\mathbb{R}^n \times \mathbb{R}^m\right)}\left\|g_{k, l}\right\|_{{B \vec{\dot{K}}_{p^{\prime}, q^{\prime}}^{-\alpha, \lambda}}} \\
			\lesssim & \frac{1}{|R|} \sum_k \sum_l\left|c_k\right|\left|d_l\right|\left\|h_{k, l}\right\|_{{M \vec{\dot{K}}_{p, q}^{\alpha, \lambda}}}\left\|g_{k, l}\right\|_{{B \vec{\dot{K}}_{p^{\prime}, q^{\prime}}^{-\alpha, \lambda}}} \\
			\lesssim & \frac{1}{|R|} \sum_k \sum_l\left|c_k\right|\left|d_l\right|\left\|\chi_R\right\|_{{B \vec{\dot{K}}_{p^{\prime}, q^{\prime}}^{-\alpha, \lambda}}}\left\|\chi_{R^{\prime}}\right\|_{{M \vec{\dot{K}}_{p, q}^{\alpha, \lambda}}} .
		\end{aligned}
		$$
		From the choice of $R^{\prime}$, we can find a proper rectangle $R_0 \subseteq \mathbb{R}^n \times \mathbb{R}^m$ such that $R, R^{\prime} \subseteq R_0$ and $\left|R_0\right| \approx|R|$. The lattice properties of the product Morrey-Herz and product block-Herz norms give
		\begin{equation}\label{end_2}
			\frac{1}{|R|} \int_R\left|b(x, y)-b_R\right| d x d y \lesssim \frac{1}{|R|} \sum_k \sum_l\left|c_k\right|\left|d_l\right|\left\|\chi_{R_0}\right\|_{{B \vec{\dot{K}}_{p^{\prime}, q^{\prime}}^{-\alpha, \lambda}}}\left\|\chi_{R_0}\right\|_{{M \vec{\dot{K}}_{p, q}^{\alpha, \lambda}}}.
		\end{equation}
		Using the characteristic-function estimate following from Lemma \ref{FM.} and the definitions of the product Morrey-Herz and product block-Herz norms, together with \eqref{end_2}, we arrive at
		$$
		\frac{1}{|R|} \int_R\left|b(x, y)-b_R\right| d x d y \lesssim \frac{\left|R_0\right|}{|R|} \sum_k \sum_l\left|c_k\right|\left|d_l\right| \lesssim 1 .
		$$
		This proves the assertion.
	\end{proof}

	\begin{Remark}
	We remark that the boundedness of bi-parameter singular integral operators on the classical product Morrey space $M_{u,p}\left(\mathbb R^n\times\mathbb R^m\right)$ was established in \cite{Wei2023_2}, while the corresponding boundedness result on the product Herz space ${\vec{\dot K}_{p,q}^{\alpha}}\left(\mathbb R^n\times\mathbb R^m\right)$ was obtained in \cite{Wei2021_1}. To the best of our knowledge, the Fefferman-Stein vector-valued strong maximal inequality, the John-Nirenberg inequality, the characterization of little bmo, and the boundedness of bi-parameter Calderón-Zygmund operators and their commutators on the product Morrey-Herz space ${M\vec{\dot K}_{p,q}^{\alpha,\lambda}}\left(\mathbb R^n\times\mathbb R^m\right)$ are new.
	\end{Remark}
	
	\vspace{1cm}

	\noindent{\bf Acknowledgments } 
Cen would like to express his deepest gratitude to Dr. Zhicheng Tong (Jilin University) for his unwavering support over the years, which has been instrumental in helping him overcome numerous challenges. He also thanks his advisor, Prof. Xinfeng Wu, for invaluable guidance and steadfast mentorship. Finally, the authors acknowledge the anonymous editors and reviewers for their careful reading of the manuscript and their insightful comments, which improved both the content and the presentation of this work.

	\medskip 
	
	\noindent{\bf Data Availability} Our manuscript has no associated data.

	\medskip 
	\noindent{\bf\Large Declarations}
	\medskip 
	
	\noindent{\bf Conflict of interest} The authors declare that they have no conflict of interest.

\end{document}